\def\showcomments{True}
\newtheorem{definition}{Definition}[section]
\newtheorem{lemma}[definition]{Lemma}
\newtheorem{fact}[definition]{Fact}
\newtheorem{theorem}[definition]{Theorem}
\newtheorem*{theorem*}{Theorem}
\newtheorem*{lemma*}{Lemma}
\renewcommand{\le}{\leqslant}
\renewcommand{\leq}{\leqslant}
\renewcommand{\ge}{\geqslant}
\renewcommand{\geq}{\geqslant}
\newcommand{\bigO}{\mathcal{O}}
\newcommand{\polylog}{\mathrm{polylog}}
\newcommand{\Prc}[1]{\mathbf{Pr} \left( #1 \right)}
\newcommand{\Expcc}[1]{\mathbf{E} \left[ #1 \right]}
\newcommand{\N}{\mathcal{N}}
\newcommand{\diam}{\text{diam}}
\newcommand{\enqueue}{\texttt{enqueue}}
\newcommand{\dequeue}{\texttt{dequeue}}
\algnewcommand\algorithmiccase{\textbf{case}}
\newcommand{\Hl}[1]{{#1}^ {(\ell)}}
\newcommand{\Vl}{V^{(\ell)}}
\newcommand{\lgraph}{$\ell$-graph}
\newcommand{\Mgra}[1]{{#1}^{(\ell)}}
\newcommand{\Erdos}{Erd\H{o}s-R\'enyi}
\newcommand{\IC}{$\mathrm{IC}$\xspace}
\newcommand{\SIR}{$\mathrm{SIR}$\xspace}
\newcommand{\Gnq}{$\mathcal{G}_{n,q}$\xspace}
\newcommand{\SWGpl}{$\mathcal{SW}(n, \alpha)$\xspace}
\title{Bond Percolation in Small-World Graphs with Power-Law Distribution}
\date{}
\author{Luca Becchetti\\
    {\footnotesize{}Sapienza Università di Roma}\\
    {\footnotesize{} Rome, Italy}\\
    {\footnotesize{}\texttt{becchetti@dis.uniroma1.it}} 
    \and Andrea Clementi \\ 
    {\footnotesize{}Università di Roma Tor Vergata}\\
    {\footnotesize{} Rome, Italy}\\
    {\footnotesize{}\texttt{clementi@mat.uniroma2.it}} 
    \and Francesco Pasquale \\
    {\footnotesize{}Università di Roma Tor Vergata}\\
    {\footnotesize{} Rome, Italy}\\
    {\footnotesize{}\texttt{pasquale@mat.uniroma2.it}} 
    \and Luca Trevisan \\
    {\footnotesize{}Università Bocconi}\\
    {\footnotesize{} Milan, Italy}\\
    {\footnotesize{}\texttt{l.trevisan@unibocconi.it}} 
    \and Isabella Ziccardi\\
    {\footnotesize{}Università dell'Aquila}\\
    {\footnotesize{}L'Aquila, Italy}\\
    {\footnotesize{}\texttt{isabella.ziccardi@graduate.univaq.it}} 
}
\begin{document}

\maketitle

\begin{abstract}
\emph{Full-bond percolation} with parameter $p$ is the process in which, given
a graph, for every edge independently, we keep the edge with probability $p$
and delete it with probability $1-p$.  Bond percolation is motivated by
problems in mathematical physics and it is studied in parallel computing and
network science to understand the resilience of distributed systems to random
link failure and the spread of information in networks through unreliable
links.

Full-bond percolation is also equivalent to the \emph{Reed-Frost process}, a
network version of \emph{SIR} epidemic spreading, in which the graph represents
contacts among people and $p$ corresponds to the probability that  a contact
between an infected person and a susceptible one causes a transmission of the
infection.

We consider \emph{one-dimensional power-law small-world graphs} with parameter
$\alpha$ obtained as the union of a cycle with additional long-range random
edges: each pair of nodes $(u,v)$ at distance $L$ on the cycle is connected by
a long-range edge $(u,v)$, with probability proportional to $1/L^\alpha$. Our
analysis determines three phases for the percolation subgraph $G_p$ of the
small-world graph, depending on  the value of $\alpha$.

\begin{itemize}
\item If $\alpha < 1$, there is a $p<1$ such that, with high probability, there
are $\Omega(n)$ nodes that are reachable in $G_p$ from one another in
$\bigO(\log n)$ hops;
\item If $1 < \alpha < 2$, there is a $p<1$ such that, with high probability,
there are $\Omega(n)$ nodes that are reachable in $G_p$ from one another in
$\log^{\bigO(1)}(n)$ hops;
\item If $\alpha > 2$, for every $p<1$, with high probability all
connected components of $G_p$ have size $\bigO(\log n)$.
\end{itemize}

Percolation in power-law small-world  graphs is well studied in the case of
infinite graphs. In finite graphs, previous work has addressed the long-range
percolation process, in which the ring edges are all preserved with probability
one. The setting of full-bond percolation in finite graphs studied in this
paper, which is the one that corresponds to the network SIR model of epidemic
spreading, had not been analyzed before.
\end{abstract}

\newpage
\section{Introduction} \label{sec:newintro}
Given a graph $G=(V,E)$ and a probability $p(e)$ associated to each edge $e\in
E$, the \emph{full-bond percolation} process\footnote{We simply write bond
percolation when no confusion arises.} on $G$ is the construction of a random
subgraph $G_p = (V,E_p)$ of $G$, called the percolation graph, obtained by
selecting each edge $e \in E$ to belong to $E_p$ with probability $p(e)$,
independently of the other edges. Often, the focus is on the {\em homogeneous}
case in which all probabilities $p(e)$ are equal to the same parameter $p$.
The main questions of interest in this case are, depending on the choice of $G$
and $p$, what is the typical size of the connected components of $G_p$ and the
typical distances between reachable vertices.

The study of the percolation process originates in mathematical
physics~\cite{KetAl80, newman1986one, VK71} and it has several applications in
parallel and distributed computing  and network science~\cite{ABS04, CPGE19,
KKT15, EK10, KNT94, LRSV14}. For example, the study of network reliability in
the presence of random link failures is equivalent to the study of the
connectivity properties of the percolation graph of the network of
links~\cite{KNT94, KL19}. 

The {\em independent cascade} is a process that  models the spread of
information and the influence of individual choices on others in social
networks, and it is equivalent to a percolation process in a way that we
explain next. In the independent cascade, we have a network $G = (V,E)$, an
{\em influence} probability $p(e)$ associated to each edge, and a set $I_0
\subseteq V$ of network nodes that initially have a certain opinion\footnote{Or
hold a certain piece of information, or perform a certain action, these are all
equivalent views that lead to the same distributed process.}. The process
evolves over time according to the following natural local rule: if a node $u$
acquired the opinion at time $t$, the node $v$ does not have the opinion, and
the edge $(u,v)$ exists in $G$, then node $u$ will attempt to convince node $v$
of the opinion, and it will succeed with probability $p(u,v)$. All  nodes that
were successfully convinced by at least one of their neighbors at time $t$ will
acquire the opinion, and will attempt to convince their neighbors at time $t+1$
and so on. The independent cascade  is studied in~\cite{KKT15}, where the
problem of interest is to find the most ``influential'' initial set $I_0$. The
resulting epidemic process is shown in~\cite{KKT15} to be equivalent to
percolation in the following sense: the distribution of nodes influenced by
$I_0$ in the independent cascade process has the same distribution as the set
of nodes reachable from $I_0$ in the percolation graph of $G$ derived using the
probabilities $p(e)$ (this important equivalence and its useful consequences
are presented in Appendix~\ref{sec:equi-bpic}).

The \emph{Reed-Frost} process is one of the simplest and cleanest models of
\emph{Susceptible-Infectious-Recovered} (\emph{SIR}) epidemic spreading on
networks~\cite{VespietAl15, Wang_2017}.  This process is identical to
independent cascade with a fixed probability $p(e) = p$ for all edges $e$. We
can interpret nodes that acquired the opinion in the previous step as {\em
Infectious} nodes that can spread the disease/opinion, nodes that do not have
the disease/opinion as nodes {\em Susceptible} to the infection, and nodes that
acquired the disease/opinion two or more steps in the past as {\em Recovered}
nodes that do not spread the disease any more and are immune to it. The
probability $p$ corresponds to the probability that a contact between an
infectious person and a susceptible one causes a transmission of the disease
from the former to the latter. The set $I_0$ is the initial set of infectious
people at time $0$. This process, being equivalent to independent cascade, is
also equivalent to percolation~\cite{KKT15}: the distribution of nodes that are
infected and eventually recover in the Reed-Frost process is the same as the
distribution of the set of nodes reachable from $I_0$ in the percolation graph
of $G$ derived using the probabilities $p(e)$. Furthermore, the distribution of
nodes that are infectious at time $t$ is precisely the distribution of nodes at
distance $t$ from $I_0$ in the percolation graph (see
Appendix~\ref{sec:equi-bpic} and~\cite{CWLC13}).

We are interested in studying full-bond percolation (and hence reliability
under random link failure, independent cascade, and Reed-Frost epidemic
spreading) in one-dimensional small-world graphs with power-law distribution of
edges. We shall now explain what each of these terms mean. {\em Small-world
graphs} are a collection of generative models of graphs, designed to capture
certain properties of real-life social networks~\cite{EK10, VespietAl15} and
communication networks~\cite{GGT03}. In the model introduced by Watts and
Strogatz~\cite{watts1998collective}, the network is obtained as a
one-dimensional or two-dimensional lattice in which certain edges are re-routed
to random destinations. In a refined model introduced by
Kleinberg~\cite{CDHKNS01, EK10}, a possible edge between two nodes at distance
$L$ in the lattice exists with probability proportional to $L^{-\alpha}$, where
the exponent $\alpha$ is a parameter of the model. 

We study full bond percolation in a variant of Kleinberg's model that has
already been adopted in several previous papers~\cite{BenjaminiB01, Bis04,
CDHKNS01, EK10, K00, VespietAl15} to study bond percolation and epidemic
processes: a \emph{one-dimensional power-law small-world  graph} with exponent
$\alpha$ is the union of a cycle with a collection of random edges, that we
call {\em bridges}. For each pair of vertices $(u,v)$ that are at distance $L
\geq 2$ on the cycle, the edge $(u,v)$ is chosen to be part of the graph with
probability proportional to $L^{-\alpha}$. The choices for different pairs are
mutually independent\footnote{This is the main difference with respect to
Kleinberg's model, in which the edges are not mutually independent.} and we
choose the factor of proportionality so that on average each vertex has one of
the random edges incident on it (so that the overall average degree of the
graph is 3, counting the cycle edges). This variant of small-world graphs has
been adopted, for instance,  to discover  structural  key-properties that
determine the performances of some important diffusion and navigation problems
in real networks~\cite{WSZH15}. 

Before proceeding with a statement of our results, we formally define our
generative model for future reference.

\begin{definition}[1-D power-law small-world  graphs]\label{def:small-world}
For every $n \geq 3$ and $\alpha>0$, a random graph $G=(V,E)$ with
$V=\{0,\dots,n-1\}$ is sampled according to the distribution \SWGpl\ if  $E =
E_1 \cup E_2$, where: $(V,E_1)$ is a cycle and its edges are called
\emph{ring-edges}, and, for each pair of non-adjacent vertices $u, v\in V$, the
\emph{bridge} $(u, v)$ is included in $E_2$ independently, with probability
\[
\Prc{(u,v) \in E_2 }= \frac{1}{d(u,v)^{\alpha}}\cdot \frac{1}{C(\alpha,n)},
\]
where $d(\cdot,\cdot)$ is the shortest-path distance in the ring and
$C(\alpha,n)$ is the normalizing constant\footnote{Note that $C(n, \alpha)$ is
not, strictly speaking, a constant, but rather a normalizing factor that
depends on both $\alpha$ and $n$. It is always upper bounded by an absolute
constant across the entire range of $\alpha$, while it falls within an interval
bounded by two constants when $\alpha > 1$. For the sake of conciseness,
abusing terminology we write ``constant'' instead of normalizing factor.}
\[
C(\alpha,n)=2\sum_{x=2}^{n/2}\frac{1}{x^{\alpha}} \, .
\]
\end{definition}

The process of {\em long-range percolation} on finite power-law small-world
graphs is the variant of the percolation process applied to the generative
model described above in which ring edges are preserved with probability one.
The resulting percolation graphs are always connected, and the main question of
interest is their diameter. Long-range percolation in power-law  small-world
graph distributions is well understood, and the one-dimensional case is studied
in~\cite{BenjaminiB01, K00, watts1998collective}. In
particular,~\cite{BenjaminiB01} provide bounds on the diameter and on the
expansion of such graphs as a function of the power-law exponent $\alpha$. Such
results have been then sharpened and generalized to multi-dimensional boxes
in~\cite{Bis11}. The long-range percolation process, however, is not a
realistic generative model for epidemiological processes, because even the most
contagious diseases, including Ebola, do not have $100\%$ probability of
spreading through close contacts (see~\cite{moore2000epidemics,
newman1999scaling, VespietAl15} for a discussion of this point).

Full-bond percolation in power-law small-world graphs has been studied in the
case of infinite lattices, including the one-dimensional case that is the
infinite analog of the model that we study in this paper. In the infinite case,
the main questions of interest, which are studied in~\cite{Bis04}, are whether
the percolation graph has an infinite connected component and, given two
vertices, what is their typical distance in the percolation conditioned on them
both being in the infinite component, as a function of their distance in the
lattice.  Although there are similarities, techniques developed to study
infinite percolation graphs do not immediately apply to the finite case,
particularly when one of the technical  goals is to obtain rigorous results in
concentration.

\smallskip In this paper, we study full-bond percolation in power-law
small-world graphs, a key question left open by previous research. The rest of
this paper is organized as follows. Section~\ref{se:contribution} provides an
overview of our results and techniques we use. A more extensive review of
previous work is given in Section~\ref{sec:related_new}.
Section~\ref{se:prelim} introduces notation and some key preliminary notions,
while Sections~\ref{sec:alpha>2}, \ref{sec:1<alpha<2} and~\ref{sec:alpha<1}
provide the full analysis of full-bond percolation in one-dimensional
power-law, small-world graphs. Finally, the proofs of some technical results
are presented in the appendix.

\section{Our Contribution}\label{se:contribution}
Previous work on full-bond percolation in power-law small-world infinite graphs
and on long-range percolation in  power-law small-world finite graphs
identified three phases for the value of $\alpha$. Infinite graphs exhibit an
infinite connected component after full-bond percolation when $\alpha < 2$ and
no infinite connected component when $\alpha >2$; furthermore distances within
the infinite component scale differently in the $\alpha <1$ versus the $1 <
\alpha < 2$ case. Finite graphs remain connected under long-range percolation
(because the ring-edges are not subject to percolation), but distances scale
differently in the $\alpha <1$, in the $1 < \alpha < 2$ and the $\alpha > 2$
case. We postpone a fuller discussion of previous work in these models to
Section~\ref{sec:related_new}.

Our analysis of the full-bond percolation process on power-law small-world
finite graphs shows that the process exhibits different behaviours in the same
three phases. The behaviors are of a somewhat different kind than in previous
work. In the $\alpha <2$ case, we find, for $p$ sufficiently large, a connected
component of size $\Omega_{\alpha} (n)$, which is best possible, and in the
$\alpha > 2$ case we show that all connected components have size
$\bigO_{\alpha}(\log n)$. Furthermore, when $\alpha <1$, we identify
$\Omega_\alpha(n)$ vertices all reachable from each other in $\bigO_\alpha(\log
n)$ steps, and when $1 < \alpha < 2$ we identify $\Omega_\alpha(n)$ vertices
all reachable from each other in $\bigO(\log^{\bigO_\alpha(1)}(n))$ steps.

Our analysis is fully rigorous and the obtained results hold \emph{with high
probability}\footnote{As usual, we say that an event $E$ holds with high
probability if a constant $\gamma>0$ exists such that $\Prc{E}\geq
1-n^{-\gamma}$.} (for short, \emph{w.h.p.}).

The three phases are characterized by sharply different distributions of the
typical length (measured according to the ring metric) of the bridges. To gauge
the difference, consider the expectation, for a fixed vertex, of the sum of the
lengths (in ring metric) of the bridges incident on the vertex, and call this
expectation $BL_{\alpha,n}$.

When $\alpha< 1$, we have that $BL_{\alpha,n}$ is linear in $n$. When $1 <
\alpha < 2$, then $BL_{\alpha,n}$ is of the form $O(n^{2-\alpha})$, going to
infinity with $n$, but sublinearly in $n$. Finally, when $\alpha >2$,
$BL_{\alpha,n}$ is a constant  that depends only on $\alpha$ and is independent
of $n$. Nodes have, in expectation, only one bridge, so $BL_{\alpha,n}$ is an
indication of how much we can advance on the ring by following one bridge. 

When $\alpha <1$, the bridges  are basically as good as random edges, and we
would expect a giant component to emerge even after full-bond percolation, if
$p$ is large enough. When $\alpha >2$, the bridges   behave like a constant
number of ring-edges, and we would not expect a large component when $p<1$. The
case $1< \alpha < 2$ is the one for which it is hardest to build intuition, and
the fact that the bridges   have typically length of the form $n^{1-
\Omega(1)}$ might suggest that it would take $n^{\Omega(1)}$ steps to reach
antipodal nodes.

Previous work on long-range percolation had established a polylogarithmic
diameter bound in the model in which ring-edges are not subject to percolation.
In that model,  all pairs of nodes are reachable in a polylogarithmic number of
steps even though the typical bridge  has length $n^{1-\Omega(1)}$, and this
suggests that the shortest path structure is such that a small number of long
bridges  is used by several shortest paths. One thus  would expect such a
structure to be sensitive to full-bond percolation, and indeed the proof
of~\cite{BenjaminiB01} relies on the deterministic presence of the ring-edges.

Instead, we prove that, when $1 < \alpha <2$, w.h.p, most pairs of nodes are
reachable from one another in a polylogarithmic number of hops after the
full-bond percolation process.

\subsection*{Our results}\label{sec:ours}
We next provide the analytical results for each of the three regimes.

When $\alpha>2$, we  show that the percolation graph $G_p$, for every possible
percolation probability $p<1$,  has w.h.p. all connected components of size at
most $\bigO(\log n)$ and of small ring-diameter.

\begin{theorem}[Case $\alpha \in (2, +\infty)$]
\label{thm:alfa>2_terminates}

Let $\alpha>2$ be a constant and $p<1$ a percolation probability. Sample a
graph $G=(V,E)$ from the \SWGpl distribution and let $G_p$ be the percolation
graph of $G$ with percolation probability $p$. The following holds:

\begin{enumerate}
\item W.h.p., the connected components of $G_p$ have size at most $\bigO(\log
n)$;
\item For each node $v \in V$ and  for any sufficiently large $\ell$, with
probability  $1-\Omega( \ell^{-(\alpha-2)/2})$, every node connected to $v$ in
$G_p$ Is at ring-distance no larger than $\bigO(\ell^2)$ from $v$.
\end{enumerate}
\end{theorem}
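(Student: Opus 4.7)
The plan is to produce, for each vertex $v$, one or more ``clean cut'' positions on the ring such that no edge of $G_p$ crosses them, thereby confining the component $C_v$ to a bounded arc of the ring. Fix $v=0$ and call cut $k$ \emph{clean} if the ring-edge $(k,k{+}1)$ is absent from $G_p$ and no bridge of $G_p$ has one endpoint in $\{0,\dots,k\}$ and the other endpoint in the complement arc; a clean cut on each side of $v$ confines $C_v$, so the ring-diameter $D_v$ is at most the sum of the distances from $v$ to the nearest clean cut on the two sides. For $\alpha>2$ the summability of $\sum_{d\geq 1} d^{1-\alpha}$ implies that the expected number of bridges of $G_p$ crossing a fixed position equals $\sum_d d\cdot p/(C(\alpha,n)d^\alpha)=O_\alpha(1)$, so by independence of the bridge events any individual cut is clean with probability at least a constant $c_0=c_0(\alpha,p)>0$.

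For Part~2, I would apply Chebyshev's inequality. Place $\ell$ candidate cuts at $v+i\ell$ for $i=1,\dots,\ell$ and let $Z$ be the count of clean ones. Then the event that the right extent of $C_v$ is at most $\ell^2$ contains the event $Z>0$, so $\Prc{Z=0}\leq\mathrm{Var}(Z)/\mathbb{E}[Z]^2$. The first moment is $\mathbb{E}[Z]\geq c_0\ell$ and the diagonal contribution to the variance is $O(\ell)$; a bridge contributes to $\mathrm{Cov}$ between cuts $i<j$ only by crossing both, which forces its ring-length to be at least $(j-i)\ell$, so summing $p/(C(\alpha,n)d^\alpha)$ over such bridges gives an expected count $\mu_{ij}=O\bigl(((j-i)\ell)^{-(\alpha-2)}\bigr)$, and expanding the product of independent bridge events yields $\mathrm{Cov}\leq O(\mu_{ij})$. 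Summing the covariances over pairs and dividing by $\mathbb{E}[Z]^2\asymp\ell^2$ produces the claimed failure probability $O(\ell^{-(\alpha-2)/2})$; a symmetric argument handles the left side of $v$, and for large $\alpha$ the same bound follows more directly from a Markov estimate on the number of bridges of length $\geq\ell$ incident to the window, combined with conditional independence of the cuts.

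For Part~1 a polynomial-in-$\ell$ bound cannot survive a union bound over all $n$ vertices; we need the stronger tail $\Prc{|C_v|\geq c\log n}\leq n^{-\Omega(1)}$. The plan is to upgrade the barrier argument to an exponential tail on $D_v$ via a multi-scale renormalization in the spirit of standard analyses of subcritical long-range percolation on $\mathbb{Z}$: at each geometric scale $L_j$ one shows that a typical block of size $L_j$ does not cross-link with far-away blocks, because the expected number of bridges of length $\geq L_j$ incident to a fixed block of size $L_j$ decays as $O(L_j^{2-\alpha})$ for $\alpha>2$; iterating across scales yields $\Prc{D_v\geq L}\leq e^{-cL}$. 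Combining this with $|C_v|\leq 2D_v+1$ and a union bound over $v\in V$ gives Part~1.

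The main obstacle is the exponential decay used for Part~1: the average $G_p$-degree equals $3p$, which exceeds $1$ whenever $p\geq 1/3$, so naive branching-process domination does not establish subcriticality and the argument must exploit the spatial locality specific to $\alpha>2$ through a multi-scale or block renormalization step rather than an unstructured moment count. By contrast, the Chebyshev calculation for Part~2 is essentially mechanical once the covariance bound $\mu_{ij}=O\bigl(((j-i)\ell)^{-(\alpha-2)}\bigr)$ is in hand.
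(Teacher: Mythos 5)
Your Part~2 argument (clean cuts plus a second-moment bound) is a legitimate alternative to the paper's route, which instead coarse-grains the ring into super-nodes of constant length $\ell$ and finds, within $\ell^{(\alpha-2)/2}$ super-nodes of $v$ on each side, a super-node with no incident super-edges, together with the absence of super-bridges in that window. Your covariance computation is sound (two cuts share only the bridges long enough to cross both), but note that the diagonal term alone forces the Chebyshev bound to be $\Omega(1/\ell)$, which beats the target $\ell^{-(\alpha-2)/2}$ only when $\alpha\le 4$; your fallback via a first-moment count of long bridges incident to the window is exactly what is needed for $\alpha>4$, so both cases have to be written out rather than left as an aside.

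Part~1 is where the proposal genuinely breaks down, and you have correctly identified the obstacle ($3p>1$) without resolving it. Two problems. First, the quantity you propose to control, the ring-extent $D_v$, does \emph{not} have an exponential tail: a single bridge of length $L$ incident to $v$ is present in $G_p$ with probability $\Theta(L^{-\alpha})$, so $\Prc{D_v\ge L}=\Omega(L^{-\alpha})$, the claimed bound $\Prc{D_v\ge L}\le e^{-cL}$ is false, and consequently ``$|C_v|\le 2D_v+1$ plus a union bound'' cannot yield Part~1 (this is also why Part~2 of the theorem only asserts a polynomially small failure probability). Second, the multi-scale step is only named, not performed, and as sketched --- each geometric scale $L_j$ succeeding with probability $1-O(L_j^{2-\alpha})$ --- it would at best produce polynomial, not exponential, decay. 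What the paper actually does is bound the cluster \emph{cardinality} directly: it partitions the ring into blocks of a sufficiently large constant size $\ell$ and runs a BFS on the resulting $\ell$-graph. At the block level the ring-edge chain contributes at most one new block per BFS step, and it does so with probability at most $1-(1-p)^2e^{-2/(\alpha-2)}<1$ (the complementary event being that a block is a dead end: both boundary ring-edges fail and no internal node has a bridge leaving the block), while super-bridges contribute only $O(\ell^{2-\alpha})$ new blocks in expectation. For $\ell$ a large enough constant the total expected offspring drops below $1$, the exploration is dominated by a subcritical branching process, and a Chernoff bound gives extinction within $O(\log n)$ blocks with probability $1-n^{-2}$, after which a union bound over $v$ finishes. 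This single fixed-scale coarse-graining is the missing idea; if you want to keep your cut-based picture, the statement to aim for is that consecutive length-$\ell$ windows are dead ends independently with constant probability, so that the number of windows the cluster traverses --- not the ring distance it spans --- has a geometric tail.
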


From an epidemiological point of view, this first regime is thus characterized
by a negligible chance to observe an  outbreak according to the Reed-Frost
process,  even in the presence of a large  number (say some small  root of $n$)
of initial infected nodes (i.e. sources). In particular, the second claim of
the theorem above strongly bounds the possible infected area of the ring graph. 

The following case, determined by the range $1<\alpha<2$, shows the most
interesting behaviour. 
 
\begin{theorem}[Case $\alpha \in (1,2)$]
\label{thm:main_1alpha2}
Let $\alpha \in (1,2)$ be a constant and $p$ a percolation probability. Sample
a graph $G=(V,E)$ from the \SWGpl distribution and let $G_p$ be the percolation
graph of $G$ with percolation probability $p$. Then, constants
$\underline{p},\overline{p} \in (0,1)$ (with $\underline{p} \leq \overline{p}$)
exist such that the following holds:

\begin{enumerate}
\item If $p> \overline{p}$, w.h.p., there exists a set of $\Omega(n)$ nodes
that induces a connected sub-component in $G_p$ with diameter
$\bigO(\polylog(n))$; 
\item If $p< \underline{p}$, w.h.p. all the connected components of $G_p$ have
size $\bigO(\log n)$.
\end{enumerate}
\end{theorem}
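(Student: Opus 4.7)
The plan is to handle the two claims with rather different techniques: a branching-process comparison for the subcritical bound on component sizes, and a multi-scale renormalization scheme for the supercritical existence of a large polylog-diameter component. For the subcritical claim, the key fact I would use is that when $\alpha>1$, the normalizing constant $C(\alpha,n)$ is bounded above by a function of $\alpha$ alone, so the expected number of bridges incident to any given vertex is at most some constant $c_\alpha$. Exploring the connected component of a fixed vertex $v$ in $G_p$ by BFS, the number of children of any exposed vertex is stochastically dominated by the sum of a binomial on two ring-neighbors with parameter $p$ and an independent random variable with mean $c_\alpha p$ (roughly, the number of percolated bridges incident to it). Choosing $\underline{p}$ so that $(2 + c_\alpha)\underline{p} < 1$, the exploration is dominated by a subcritical Galton--Watson process whose total progeny has exponentially decaying tails. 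The bridge-degree distribution is heavy-tailed, so I would separately invoke a tail bound on the number of percolated bridges inside a ring-interval of length $t$ to control the residual variance. A union bound over the $n$ starting vertices yields connected components of size $\bigO(\log n)$ w.h.p.

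For the supercritical claim, the plan is a renormalization across polylogarithmically many scales $L_0 < L_1 < \dots < L_K = \Theta(n)$, where $L_0$ is a sufficiently large polylog of $n$ and $L_{k+1} = L_k^{1+\delta}$ for some small $\delta=\delta(\alpha)>0$. The base scale builds ``good blocks'': I would partition the ring into intervals of length $L_0$ and show that, conditionally on a sufficiently large $p$, a constant fraction of intervals contain a core of $\Theta(L_0)$ vertices that are mutually connected in $G_p$ by edges whose endpoints both lie within an $\bigO(L_0)$-neighborhood of the interval, and that this core has internal diameter $\polylog(L_0)$. The relevant computation is that the expected number of bridges with both endpoints in an interval of length $L_0$ scales like $L_0^{2-\alpha}$, which is \emph{super-linear} for $1<\alpha<2$; together with the $p$-fraction of surviving ring edges, this gives a locally dense subgraph to which a standard Erd\H{o}s--R\'enyi-style giant-component argument applies after minor surgery.

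Next I would pass to a super-graph whose vertices are good blocks and whose edges join two good blocks whenever a percolated bridge connects their cores. A direct calculation shows that two good blocks at ring-distance $\Theta(L)$ are super-adjacent with probability at least $\Theta(p \cdot L_0^2 / L^{\alpha})$, so the super-graph dominates a small-world graph on $n/L_0$ vertices with a renormalized exponent that remains in $(1,2)$ and with an effective $p$ that stays bounded away from zero. Iterating this block construction $K = \bigO(\log\log n)$ times grows the connected region by a polylog factor at each step while adding only a $\polylog(n)$ additive term to the diameter, yielding a final component of size $\Omega(n)$ with diameter $\polylog(n)$. The main obstacle I anticipate is the inductive step of this renormalization: one must show that the event ``the block at scale $k+1$ is good'' follows from scale-$k$ goodness with probability high enough to survive a union bound over all blocks and scales. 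This in turn requires a careful use of independence across scales---partitioning the bridge edges by length so that different renormalization levels use disjoint edge sets---and tight concentration of the number of inter-block bridges when the expected count is only polylogarithmic, which I expect to handle via Poisson approximation together with a conditional second-moment bound inside each good block.
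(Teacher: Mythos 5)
Your subcritical half is essentially the paper's argument: since the normalization makes the expected bridge-degree of every vertex equal to one (so expected total degree $3$), the BFS exploration of a component is dominated by a Galton--Watson process with mean offspring at most $3p$, and taking $\underline{p}$ so that this is below $1$ (the paper takes $\underline{p}=1/3$) gives $\bigO(\log n)$ component sizes after a Chernoff bound and a union bound over starting vertices. One remark: the bridge-\emph{degree} is not heavy-tailed (it is a sum of independent Bernoullis with constant total mean; only the bridge \emph{lengths} are heavy-tailed), so the extra ``residual variance'' step you invoke is unnecessary --- the plain Chernoff argument already closes this half, exactly as in the paper.

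The supercritical half follows the same multi-scale renormalization philosophy as the paper (scales growing as $L_{k+1}=L_k^{1+\delta}$, i.e.\ doubly exponentially, cores of linear size inside blocks, inter-block connection probability of order $p\cdot(\text{core size})^2/(\text{block distance})^\alpha$, diameter recursion $D_{k+1}\le 2D_k+1$ over $\bigO(\log\log n)$ scales), but your base case has a genuine gap. First, the computation it rests on is wrong: the expected number of bridges with \emph{both} endpoints in an interval of length $L_0$ is $\Theta(L_0)$ when $\alpha\in(1,2)$ (each vertex has expected bridge-degree $\bigO(1)$), not $L_0^{2-\alpha}$, and in particular it is not super-linear; the quantity that diverges is the number of bridges crossing a fixed cut of an interval of length $L$, which is $\Theta(L^{2-\alpha})$, i.e.\ sublinear. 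Consequently the subgraph induced inside a block of length $L_0=\polylog(n)$ is \emph{not} locally dense --- it is just another copy of the same constant-average-degree, distance-biased small-world percolation --- so no ``standard Erd\H{o}s--R\'enyi-style giant-component argument'' applies, and proving that such a block contains a $\Theta(L_0)$-size core of $\polylog(L_0)$ diameter is essentially the theorem itself at size $L_0$; you cannot start the induction there without a new argument. The paper avoids this by starting the induction at a \emph{constant} scale $N_h=\bigO_\alpha(1)$, where the trivial event ``all ring edges inside the interval are retained'' already certifies a good block and has probability $p^{N_h}$ bounded below by a constant (this is where $\overline{p}$ close to $1$ enters), and then lets the per-scale failure probabilities $\delta_k$ decay doubly exponentially and the density losses $\varepsilon_k$ sum to a small constant as the scales grow; high probability at scale $N_m=\Theta(n)$ comes out of the last step, not from a union bound over all scales. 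Your independence device (partitioning bridges by length across levels) is workable but also unnecessary: as in the paper, it suffices that same-scale blocks use only edges internal to themselves (giving independence for the Chernoff step) and that the inter-block bridges are disjoint from all intra-block edges. With the base case replaced by the constant-scale one, the rest of your outline matches the paper's key lemma.
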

The first claim above  implies that, if $p$ is sufficiently large (but still a
constant smaller than $1$), then, there is a good  chance that few source nodes
are able to infect a large (i.e. $\Omega(n)$) number of nodes and, importantly,
this outbreak takes place at an almost exponential speed.

Finally, when $\alpha<1$, we show the emergence of a behaviour similar to that
generated by one-dimensional small-world models with bridges selected according
to the \Erdos\ distribution \cite{becchettiCDPTZ21, MCN00}.

\begin{theorem}[Case $\alpha \in (0,1)$]
\label{thm:main_1alpha}
Let $\alpha \in (0,1)$ and $p$ a percolation probability.  Sample a graph
$G=(V,E)$ from the \SWGpl distribution and let $G_p$ be the percolation graph
of $G$ with percolation probability $p$. Then, constants
$\overline{p},\underline{p} \in (0,1)$ (with $\underline{p} \leq \overline{p}$)
exist, such that the following holds: 

\begin{enumerate}
\item If $p> \overline{p}, $ w.h.p., there exists a set of $\Omega(n)$ nodes
that induces a connected sub-component in $G_p$ with diameter $\bigO(\log n)$.
Moreover, for a sufficiently large $\beta >0$, for any   subset $S \subseteq V$
of size $|S| \geqslant \beta \log n$, the subset of nodes  within distance
$\bigO(\log n)$ in $G_p$ from   $S$ has size $\Omega(n)$, w.h.p.
\item If $p< \underline{p}$, w.h.p. all the connected components of $G_p$ have
size $\bigO(\log n)$. 
\end{enumerate}
\end{theorem}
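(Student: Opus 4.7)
The plan rests on the fact that for $\alpha \in (0,1)$ the normalizing constant satisfies $C(\alpha,n) = \Theta(n^{1-\alpha})$, so each bridge exists with probability $\Theta(1/(d(u,v)^\alpha \, n^{1-\alpha}))$ and every vertex has $\Theta(1)$ bridges in expectation. After percolation, the expected degree in $G_p$ is $3p$ (two ring edges plus one expected bridge, each surviving with probability $p$), so $G_p$ behaves like a sparse random graph with bounded expected degree and should exhibit a sharp phase transition in $p$, mirroring the classical Erd\H{o}s--R\'enyi threshold.

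\textbf{Part 2 (subcritical phase).} I would choose $\underline{p}$ small enough that the expected degree $3\underline{p}$ is strictly below $1$. A BFS exploration from any vertex $v$ in $G_p$ can then be stochastically dominated by a subcritical Galton--Watson branching process with offspring mean $3p < 1$, whose total progeny $T$ has exponential tails $\Prc{T \geq k} \leq e^{-c(p) k}$ for some $c(p) > 0$. Taking $k = C \log n$ and union-bounding over the $n$ possible starting vertices yields that every connected component of $G_p$ has size at most $\bigO(\log n)$ w.h.p.

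\textbf{Part 1 (supercritical phase).} I would proceed in three steps. First, for $\overline{p}$ sufficiently close to $1$, I would show that BFS from a seed vertex is supercritical: a level-by-level Chernoff analysis, combined with the fact that collisions contribute only a $1 + o(1)$ correction to the growth factor while the BFS has explored $o(n)$ vertices, shows that with probability $\Omega(1)$ the BFS reaches $\Omega(n)$ vertices within $\bigO(\log n)$ steps; hence a connected component of size $\Omega(n)$ with $\bigO(\log n)$ diameter exists w.h.p., and a uniformly random vertex lies in it with probability $\rho = \Omega(1)$. Second, a concentration argument (via an exposure martingale or a second-moment computation exploiting negative association of the indicator events ``vertex $v$ lies in the giant'') upgrades the $\Omega(1)$ expected fraction to an $\Omega(n)$-sized giant with high probability. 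Third, for the second claim, the probability that no vertex of $S$ lies in the giant is at most $(1-\rho)^{|S|} \leq n^{-\beta \rho}$, which is polynomially small for $\beta$ large enough; once $S$ meets the giant, the $\bigO(\log n)$-ball in $G_p$ around $S$ covers the entire giant, of size $\Omega(n)$.

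The main technical obstacle is the first step of Part~1: showing that BFS reaches $\Omega(n)$ vertices with logarithmic diameter, rather than merely surviving or reaching sub-polynomial size. The non-uniform bridge-length distribution (short bridges are common but contribute little to diameter reduction) forces a sprinkling decomposition $p = p_1 + p_2 - p_1 p_2$, where Phase~$1$ edges build large BFS trees of size $\Omega(n^{1/2})$ from many seeds and Phase~$2$ edges merge these trees via rare long bridges. Carrying out this sprinkling carefully---ensuring that long bridges (say, of length at least $n/4$), whose percolation probability is uniformly $\Theta(1/n)$ and thus behaves like Erd\H{o}s--R\'enyi edges, suffice to cap the diameter at $\bigO(\log n)$ once combined with the ring-edges preserved in logarithmic intervals---will be the most delicate ingredient.
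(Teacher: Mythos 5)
Your Part 2 is correct and essentially identical to the paper's argument (domination by a subcritical Galton--Watson process with offspring mean $3p<1$, exponential tails, union bound over starting vertices; the paper takes $\underline{p}=1/3$). Your reduction idea for Part 1 is also the right starting point: the paper's key observation (its Fact on $C(\alpha,n)=\Theta(n^{1-\alpha})$) is precisely that every bridge survives in $G_p$ with probability at least $pc/n$, so $G_p$ stochastically dominates a cycle plus $\mathcal{G}_{n,q}$ edges, after which the paper imports the sequential/parallel BFS lemmas of Becchetti et al.\ rather than redoing the supercritical analysis.

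The genuine gap is in your first step of Part 1. The claim that BFS from a seed is supercritical because the expected degree is $3p>1$, with ``collisions contributing only a $1+o(1)$ correction,'' is false for this graph. Two of the three expected edges at each vertex are ring edges, and ring edges generate a one-dimensional chain: after the first level, each vertex reached along the ring has one of its two ring neighbours already visited with probability essentially $1$, so the ring contributes additively (two growing endpoints per chain), not multiplicatively. This is a structural feature of the cycle, not a $1+o(1)$ collision correction. Meanwhile the bridges alone have mean $p<1$ surviving edges per vertex, which is subcritical. The correct accounting---the one underlying the cited lemmas---groups the exploration into ring-chains of geometrically distributed length (mean $\Theta(1/(1-p))$) and counts the surviving bridges leaving each chain, giving an effective offspring mean of order $p(1+p)/(1-p)$ per chain; this exceeds $1$ only above a nontrivial threshold (whence $\overline{p}=\bigl(\sqrt{c^2+6c+1}-c-1\bigr)/(2c)$ in the paper), and controlling the \emph{graph} diameter additionally requires truncating the chains at bounded length so that each ``generation'' costs $\bigO(1)$ hops. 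Your sprinkling decomposition inherits the same problem, since Phase~1 already presupposes trees of size $\Omega(n^{1/2})$ grown by this (unjustified) supercritical process; note also that two trees of size $n^{1/2}$ joined by length-$\geq n/4$ bridges of probability $\Theta(1/n)$ merge only with constant, not high, probability. Finally, for the second claim of Part 1, your bound $(1-\rho)^{|S|}$ tacitly assumes independence (or negative correlation) of the events ``$v$ is not in the giant'' across $v\in S$, which you do not justify; the paper instead proves expansion directly from any seed set of size $\beta\log n$ via a parallel-BFS lemma.
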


In this last case, the presence of a sparse subset of relatively-long random
bridges implies, above the probability threshold, that a few (i.e. $\Omega(\log
n)$) sources w.h.p. generate a large outbreak at exponential speed.

\subsection*{Technical contribution: an overview}\label{ssec:ourtech}
The analysis techniques we use, like our main results, differ as the parameter
$\alpha$ varies. We recall that $G_p$ is the percolation graph of $G$, sampled
from the \SWGpl distribution.

When $\alpha>2$, we show that each component of the percolation graph $G_p$ has
w.h.p. at most $\bigO(\log n)$ nodes. To prove this fact, we need to cope with
the complex ``connectivity'' of $G_p$ yielded by the percolation of both
ring-edges and the random bridges. To better  analyze this  structure, we
introduce the notion of \emph{$\ell$-graph} $G_p^{(\ell)}$ of $G_p$, where
$\ell$ is any fixed  integer $\ell>0$. This new graph $G_p^{(\ell)}$ is in turn
a one-dimensional small-world graph of $n/\ell$ ``supernodes''. It is defined
by any fixed partition of $V$ of disjoint ring intervals, each of them formed
by  $\ell$ nodes: such $n/\ell$ intervals are the nodes, called
\emph{super-nodes}, of  $G_p^{(\ell)}$. The set of  edges of $G_p^{(\ell)}$ is
formed by two types of random links: the \emph{super-edges} that connect two
adjacent super-nodes and the \emph{super-bridges} connecting two non adjacent
super-nodes of the $n/\ell$-size ring (for the formal definition of
$G_p^{(\ell)}$ see Definition \ref{def:l-graph}). We then prove that, for any
$\alpha >2$ and any $p<1$,  each super-node of this graph has:

\begin{enumerate}
\item Constant probability to have no neighbors (for every value of $\ell $);
\item Probability $\bigO(1/\ell^{\alpha-2})$ to be incident to a super-bridge.
\end{enumerate}

We then design an appropriate BFS visit of $G_p^{(\ell)}$ (see
Algorithm~\ref{alg:BFS-visit} in Section~\ref{sec:alpha>2}) that keeps the role
of super-edges and super-bridges well-separated. In more detail, starting from
a single super-node $\mbox{\textsc{s}}$, we show that the number of super-nodes
explored at each iteration of the visit  turns out to be dominated by a
branching process having two distinct additive contributions: one generated by
the new, visited percolated super-edges and the other generated by the new,
visited percolated super-bridges. Special care is required to avoid too-rough
redundancy in counting possible overlapping contributions from such two
experiments. Then, thanks to Claims~1 and~2 above,  we can prove that, for
$\ell=\bigO(1)$ sufficiently large, w.h.p. this branching process ends after
$\bigO(\log n)$ steps, and this proves that the connected component of
$\mbox{\textsc{s}}$ in $G_p^{(\ell)}$ has  $\bigO(\log n)$ super-nodes.   We
also remark that our approach above   also implies the following interesting
result: with probability $1-\bigO(1/\ell^{\alpha-2})$, all super-nodes
connected to $v$ are at ring distance at most $\ell$ from $v$. This implies
that, for each node $s$, the nodes in the connected component of $s$ in $G_p$
are within ring distance $\bigO(\ell^2)$ from $s$, with the same probability.

When $1<\alpha<2$, we make a different analysis, based on a suitable  inductive
interval partitioning of the initial ring. In this case, our technique is
inspired by the partitioning method used in~\cite{newman1986one}
and~\cite{BenjaminiB01}. However, though the high level idea is similar, our
analysis needs to address technical challenges that require major adjustments.
In particular, the analysis of~\cite{newman1986one} only addresses and relies
upon properties of infinite one-dimensional lattices with additional long-range
links, applying Kolmogorov 0-1 law to the tail event of percolation. On the
other hand,~\cite{BenjaminiB01} borrows from and partly
extends~\cite{newman1986one} to the finite case, proving a  result similar to
ours for finite one-dimensional lattices, but with important differences both
in the underlying percolation model and in the nature of the results they
obtain.  As for the first point, in their model, ring-edges are
deterministically present, while only long-range edges (what we call bridges in
this paper) are affected by percolation. In contrast, in our model, \emph{all}
edges are percolated to obtain a realization of $G_p$. Consequently, the graph
is deterministically connected in~\cite{BenjaminiB01}, while connectivity  is
something that only occurs with some probability in our setting. In particular,
we face a subtler challenge, since we need to show the existence of a connected
component that both spans a constant fraction of the node \emph{and} has
polylogarithmic diameter\footnote{As an example, we might have a connected
component including a constant fraction of the nodes and linear diameter,
containing a smaller connected component, still spanning a constant fraction of
the nodes, but of polylogarithmic diameter. }. Moreover, their result for the
diameter of the percolated graph only holds with probability tending to $1$ in
the limit, as the number of nodes grows to infinity, while we are able to show
high probability. 

We next provide a technical overview of our approach, highlighting the 
main points of the proof in which our percolation model requires a 
major departure from the approach of~\cite{BenjaminiB01}.  

Similarly to~\cite{BenjaminiB01}, we consider a diverging sequence $\{N_k\}_k$
and for each $k$ we consider in $G_p$ an interval of length $N_k$ consisting of
adjacent nodes on the ring. Departing from~\cite{BenjaminiB01}, we then prove
that each interval of size $N_k$ contains a constant fraction
$(1-\varepsilon_k)N_k$ of nodes,  that induces a connected component of
diameter $D_k$, with probability $1-\delta_k$, considering only edges
\emph{internal} to the interval and were $\varepsilon_k$ and $\delta_k$ are
suitable constants that only depend on $k$, $p$ and $\alpha$.  To prove this
fact, for each $k$, we consider an arbitrary interval $I_k$ of size $N_k$ and
we proceed inductively as follows:

\begin{enumerate}
\item We divide $I_k$ in $N_k / N_{k-1}$ smaller intervals of size $N_{k-1}$;
\item We assume inductively that each of these intervals contains
$(1-\varepsilon_{k-1})N_{k-1}$ nodes that induces a connected component of
diameter $D_{k-1}$, with probability $1-\delta_{k-1}$;
\item With a concentration arguments, we prove that a constant fraction of the
intervals in which we divide $I_k$ have the above property, with probability
that increases with $k$ (we call these intervals \emph{good});
\item We consider all good intervals of the previous point, and we prove that
all these intervals are connected to each other with probability increasing in
$k$.
\end{enumerate}

The above reasoning therefore implies, for some $\varepsilon_k$, the existence
of a fraction $(1-\varepsilon_k)N_k$ of nodes in $I_k$, that with increasing
probability in $k$, induce a connected component of diameter $D_k = 2D_k +1$.
More details about the sequences $N_k,\varepsilon_k$ and $\delta_k$ can be
found in the full proof.  As mentioned earlier, our proof needs to specifically
address percolation of both ring-edges and bridges. This in particular, means
tackling points (2) - (4) above, which is a major challenge not present
in~\cite{BenjaminiB01} and is taken care of in the technical
Lemma~\ref{lem:key_1alpha2} and in the proof of the main
Lemma~\ref{lem:1alpha2_underthreshold} itself.  In particular, considering only
edges internal to the interval is very important, since, in this way, the
events denoting the connection of disjoint intervals are independent.  The
recurrence of the diameter derives from the fact that the path connecting two
nodes $u$ and $v$ in $I_k$ consists of the following three sub-paths:

\begin{enumerate}
\item The first part has length at most $D_k$, and it is a path in the
aforementioned ``good'' interval of size $(1-\varepsilon_{k-1})N_{k-1}$
containing $u$;
\item The second part consist of a single edge, connecting the good interval
containing $u$ with the good interval of $v$;
\item The third has length at most $D_k$, and is a path of the good interval of
size $(1-\varepsilon_{k-1})N_{k-1}$ containing $v$.
\end{enumerate}

Finally (Lemma \ref{lem:1alpha2_underthreshold}), we consider $k$ such that
$N_k=\Omega(n)$. For such $k$, we prove that
\[
\delta_k 
= \bigO(1/n^\varepsilon), \quad \varepsilon_k  \ll 1, \quad D_k 
= \polylog(n),
\]
so this implies the existence in $G_p$ of $\Omega(n)$ nodes inducing a
connected component with diameter $\polylog(n)$, w.h.p.

When $\alpha<1$, we notice that every bridge-edges $(u,v)$ is in $G_p$ with
probability at least $pc/n$, for some constant $c$. This reduces our problem to
the analysis of the percolation graph $H_p$ of a graph $H$, where $H$ is the
union of a ring and an \Erdos \ graph $\mathcal{G}_{n,q}$.
\cite{becchettiCDPTZ21} contains a detailed analysis of this case, and we use
their results to prove that $G_p$, for any sufficiently large $p$, contains a
connected component with diameter $\bigO(\log n)$.

We finally remark that, for every value $\alpha$, when $p<1/3$ the connected
components of $G_p$ have at most $\bigO(\log n)$ nodes. This fact easy follows
from concentration techniques, and the fact that every node in $G_p$ has
expected degree at most $3$.

\section{Related work}\label{sec:related_new}
As discussed in the previous sections, variants of bond-percolation have been
investigated in different areas, including computer and network science,
mathematical physics, and epidemiology. In the rest of this section, we only
discuss rigorous analytical results that are most related to our work.

\subsection*{Small-world graphs}
\paragraph{Power-law graphs.} As we briefly discussed in the introduction, a
number of variants were considered in previous work, differing in key aspects.
E.g., whether the graph has infinite size (e.g., $\mathbb Z^d$) or bounded
size, the dimension $d$ of the underlying lattice/torus  and, very importantly,
whether bond-percolation is applied to all edges (\emph{full-bond percolation})
or only to a subset of long-range edges (\emph{long-range bond percolation}). 
 
Long-range percolation in finite small-world graphs with power-law bridge
distributions was investigated in~\cite{BenjaminiB01, K00, watts1998collective}
in the one-dimensional case. In particular,~\cite{BenjaminiB01} provides bounds
on the diameter and on the expansion of the augmented ring as a function of the
power-law exponent $\alpha$. These results have been then sharpened and
generalized to multi-dimensional boxes by Biskup in~\cite{Bis11}. These results
are consistent  with the three regimes we characterize in this paper for the
full-bond percolation process. However, none of our results can be derived from
these previous results because of a crucial aspect we discussed in the previous
section: in long-range percolation models, the percolation graph is
deterministically connected and arguments of the works above, showing upper
bounds on the diameter, strongly rely on the existence of sub-paths including
that include deterministic grid edges. Importantly, such previous analyses
leave open the key question of whether or not, in the range $\alpha < 2$,
full-percolation can still result (for suitable values of the percolation
probability $p$) in the emergence of a large connected subgraph of small
diameter: an affirmative answer to this question and its consequences are in
fact one of the main novelties of our work. On the other hand, lower bounds on
the diameter of the connected component given in~\cite{BenjaminiB01, Bis11} do
not exclude the existence of a large  connected component of smaller diameter
and, thus, a potential epidemic outbreaks. Finally, the bounds given
in~\cite{BenjaminiB01, Bis11}, are not proved in ``concentration'' (i.e.,
w.h.p.).
    
As for full-bond percolation on infinite graphs,~\cite{Bis04} studies infinite
lattices of any dimension $d$ augmented with long-range bridges selected
according to a class of distributions that includes  power-laws. In more
detail, they derive bounds on the ``typical distance'' between any two points
of the grid that are consistent with our results for finite graphs.  However,
as discussed in the previous section, their argument to prove both upper and
lower bounds on the typical distance (and, so, on the diameter of any induced
subgraph) does not extend to finite graphs. Moreover, no explicit bounds on the
confidence probability of their bounds can be derived as a functions of the
size of the induced subgraphs, which is a major technical goal in the finite
setting.
    
\paragraph{Other models of small-world graphs.} \cite{becchettiCDPTZ21} studies
a small-world model resulting from a  cycle augmented with a subset of bridge
edges sampled from the \Erdos\ distribution \Gnq. They provide a sharp
threshold for the percolation probability $p$ as a function of the
bridge-probability parameter $q$. As remarked in Subsection \ref{ssec:ourtech},
our analysis for the case $\alpha < 1$ relies on some technical results
from~\cite{becchettiCDPTZ21}, since in this case, the distribution of bridges
is very close to \Gnq, for a suitable choice of the parameter $q$.
 
A further analysis of the percolation process in~\cite{becchettiCDPTZ21}
considers the case in which the graph consists of a cycle and a random perfect
matching. Also for this regular case, the authors prove a sharp threshold for
the percolation probability $p$: it turns out that this threshold is smaller
than the one for the  \Erdos\ small-world graphs when $q$ is chosen, so as to
yield the same expected number of bridges.
    
Bond percolation on the class of $1$-dimensional small-world networks (that is,
graphs obtained as the union of a cycle and of randomly chosen edges) has been
studied in~\cite{MCN00}: using numerical approximations on the moment
generating function, non-rigorous bounds on the critical threshold have been
derived while analytical results are given neither for the expected size of the
number of infected nodes above the transition phase of the independent-cascade
process nor for the infection time.  Further non-rigorous results on the
critical points of several classes of complex networks have been derived
in~\cite{LRSV14, LKAK16} (for good surveys see~\cite{VespietAl15, Wang_2017}).

\subsection*{Other classes of graphs}
In~\cite{DGM06}, for a symmetric, connected graph $G = (V,E)$, Draief et al
prove a general lower bound on the critical point  in terms of spectral
properties. Further versions of such bounds for special cases have been
subsequently derived in~\cite{LKAK16, LSV14}. We remark here that such
approaches are not useful to discover different behaviours depending on the
value $\alpha$ in our model. 

A fundamental and rigorous study of bond percolation in random graphs has been
proposed by Bollob\'as et al in~\cite{Bollo07}. They establish a coupling
between the bond percolation process and a suitably defined branching process.
In the general class of inhomogenous \Erdos\ random graphs, calling $Q = \{
q_{u,v}\}$ the edge-probability matrix, they derived the critical point
(threshold) of the phase transition and the size of the giant component above
the transition. The class of inhomogeneous random graphs to which their
analysis applies includes generative models that have been studied in the
complex network literature. For instance, a version of the Dubin's
model~\cite{Durrett1990ATT} can be expressed in this way, and so can the
\emph{mean-field scale-free model}~\cite{Bollobs2004TheDO}, which is, in turn,
related to the   Barabási–Albert model~\cite{Barab509}, having the same
individual edge probabilities, but with edges present independently. Finally,
we observe that the popular CHKNS model introduced by  Callaway et
al~\cite{CDHKNS01} can be analyzed using an edge-independent version of this
model. Indeed, they consider a random graph-formation process where, after
adding each node, a Poisson number of edges is added to the graph, again
choosing the endpoints of these edges uniformly at random. For all such
important classes of random graph models, they show tight bounds for the
critical points and the relative size of the giant component beyond the phase
transition.

In our setting, if we sample a graph from \SWGpl and then consider the
percolation graph $G_p$, the distribution of $G_p$ is that of an inhomogenous
\Erdos\ graph in which the cycle edges have probability $p$ and each bridge
$(x,y)$ has probability $\sim p \cdot 1/d(x,y)^{\alpha}$.

We observe that the results of~\cite{Bollo07} to the inhomogenous random graph
equivalent to the above percolation graph do not lead to tractable conditions
on the existence of threshold values of $p$ for which a large induced subgraph
of small diameter emerges, which is the kind of results  we are interested on. 

Finally, sharp threshold for bond percolation on bounded-degree expanders are
given in~\cite{BBLR12}.

\section{Model and Preliminaries}\label{se:prelim}

In this paper, we study bond percolation of graphs sampled from \SWGpl, as
formalized in Definition~\ref{def:small-world}. In this section, we define
notation, key notions and tools that will be used throughout the rest of this
paper. Further notation used in the proofs of specific results is introduced
wherever it is used. 

\medskip \noindent\textbf{Key notions and notation.} For the sake of
completeness, we begin with the formal definition of bond percolation.

\begin{definition}[Bond percolation]\label{def:percolation}
Given a graph $G = (V, E)$ and a real $p \in [0,1]$, its (bond)
\emph{percolation graph} $G_p$ is the random subgraph obtained from $G$ by
removing each edge $e \in E$ independently, with probability $1 - p$.
\end{definition}

Considered any graph $G = (V, E)$ and $v\in V$, we denote by $\N_G(v)$ the
neighborhood of $v$ in $G$, while $\deg_G(v) = |\N_G(v)|$ denotes the degree of
$v$ in $G$. We omit the subscript when $G$ is clear from context. 

If $G = (V, E_1\cup E_2)$ is sampled from \SWGpl\ as in
Definition~\ref{def:small-world}, we say that a bridge $(u,v)\in E_2$  has
\emph{length} $d(u,v)$ (where $d(\cdot,\cdot)$ is as in
Definition~\ref{def:small-world}).  We also say that $u$ is at
\emph{ring-distance} $d(u,v)$ from $v$ (and viceversa). Considered a bridge
$(u, v)$, we say that $(u, v)$ is on the \emph{clockwise} (respectively,
\emph{counter-clockwise}) side of $u$ if $d(u, v)$ corresponds to moving
clockwise (respectively, counter-clockwise) along the cycle $(V, E_1)$ from $u$
to $v$. 

Given a graph $G = (V, E)$ and $s\in V$, we denote by $\Gamma_G(s)$ the
connected component containing $s$ in $G$. We may omit $G$ when clear from
context, while with a slight abuse of notation, we simply write $\Gamma_p(s)$
for $\Gamma_{G_p}(s)$ when we refer to the percolation $G_p$ of some graph $G$,
which will always be understood from context. Given $G = (V, E)$ and
$S\subseteq V$, $\diam_G(S)$ is equal to the diameter of the subgraph of $G$
induced by $S$ if this is connected, otherwise $\diam_G(S) = \infty$. With a
slight abuse of notation, we write $\diam_p(S)$ when $G$ is the percolation
graph $G_p$.  Given $G = (V, E)$ sampled from \SWGpl\ and any subgraph $H = (V,
E')$ such that $E'\subseteq E$ (e.g., $G_p$), we associate a \emph{ring-metric}
to $H$, so that the \emph{ring-distance} between $u, v$ is simply $d(u, v)$
defined above on $G$. 

\medskip

\paragraph{Remarks.} In the sections that follow, unless stated otherwise,
probabilities are always taken over both the randomness in the sampling of $G$
from \SWGpl and over the randomness of the percolation.  We further remark
that our choice of the normalizing constant $C(\alpha,n)$ in
Definition~\ref{def:small-world} entails $\Expcc{\deg(v)}=3$, while the
following, preliminary fact follows from a straightforward application of
Chernoff bound (Theorem~\ref{thm:chernoff}):

\begin{fact}
\label{fact:degreeofSWGpl}
Sample a graph $G=(V,E)$ from \SWGpl. Then,
\begin{equation}
\Prc{\max_v \deg(v)\leq 4\log n + 2} \geq 1-\frac{1}{n} \, .
\end{equation}
\end{fact}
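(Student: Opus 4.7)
The plan is a straightforward Chernoff-plus-union-bound argument. Fix an arbitrary vertex $v \in V$. Its degree decomposes as $\deg(v) = 2 + B_v$, where the $2$ accounts for the two ring-edges incident to $v$ (which are deterministically present), and $B_v$ is the number of bridges incident to $v$. By Definition~\ref{def:small-world}, $B_v = \sum_{u : d(u,v) \geq 2} X_{uv}$, where the $X_{uv}$ are independent Bernoulli random variables with $\Prc{X_{uv} = 1} = 1/(d(u,v)^{\alpha} \cdot C(\alpha,n))$. So it suffices to show that $B_v \leq 4\log n$ holds simultaneously for all $v$ with probability at least $1 - 1/n$.

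First I would compute $\Expcc{B_v}$. Splitting the sum according to whether $u$ lies clockwise or counter-clockwise from $v$, and using that each ring-distance $x \in \{2, \dots, n/2\}$ is attained by exactly two vertices (apart from the antipodal case, which only adds a lower-order correction),
\[
\Expcc{B_v} = \frac{1}{C(\alpha,n)} \cdot 2\sum_{x=2}^{n/2} \frac{1}{x^{\alpha}} = 1,
\]
by the definition of the normalizing constant $C(\alpha,n)$. Thus $B_v$ is a sum of independent indicator variables with mean exactly $1$, regardless of $\alpha$ and $n$.

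Next I would apply the multiplicative Chernoff bound (the invoked Theorem~\ref{thm:chernoff}): for a sum $X$ of independent indicators with mean $\mu$ and any $t \geq 2e\mu$,
\[
\Prc{X \geq t} \leq \left(\frac{e\mu}{t}\right)^{t}.
\]
Taking $\mu = 1$ and $t = 4\log n$ (and noting $4\log n \geq 2e$ for $n$ sufficiently large), this yields
\[
\Prc{B_v \geq 4\log n} \leq \left(\frac{e}{4\log n}\right)^{4\log n} \leq \frac{1}{n^{2}}
\]
for all sufficiently large $n$, since $(e/(4\log n))^{4\log n}$ decays faster than any inverse polynomial in $n$.

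Finally, a union bound over the $n$ choices of $v$ gives
\[
\Prc{\exists v : \deg(v) > 4\log n + 2} \leq n \cdot \frac{1}{n^2} = \frac{1}{n},
\]
which is the claimed bound. The proof contains no real obstacle: the only mildly delicate point is verifying that the calculation of $\Expcc{B_v}$ exactly matches the normalization (which it does, by construction of $C(\alpha,n)$), and choosing the Chernoff regime so that the tail bound beats $n^{-2}$ with a clean constant—hence the choice $4\log n$ rather than a smaller multiple of $\log n$.
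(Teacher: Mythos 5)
Your proof is correct and takes essentially the approach the paper intends: the paper gives no written argument for this fact, stating only that it ``follows from a straightforward application of Chernoff bound,'' and your decomposition of $\deg(v)$ into the two ring-edges plus a mean-$1$ sum of independent bridge indicators, a Chernoff tail bound at $4\log n$, and a union bound over the $n$ vertices is exactly that argument. The only cosmetic point is that the $(e\mu/t)^t$ form you invoke is not literally one of the inequalities listed in Theorem~\ref{thm:chernoff}, but it is an equivalent standard multiplicative Chernoff estimate and delivers the same per-vertex $n^{-2}$ bound.
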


\medskip
 
\noindent\textbf{Galton-Watson branching processes.}
\label{sec:GW}
Our analysis of the percolation process in part relies on a reduction 
to the analysis of appropriately defined branching processes.

\begin{definition}[Galton-Watson Branching Process]
\label{def:GW}
\label{def:branchingprocess}
Let $W$ be a non-negative integer random variable, and let $\{ W_{t,1}\}_{t\geq
1, i\geq 1}$ be an infinite sequence of independent identically distributed
copies of $W$. The \emph{Galton-Watson branching process} generated by the
random variable $W$ is the process $\{ X_t \}_{t\geq 0}$ defined by $X_0=1$ and
by the recursion
\[
X_t = \sum_{i=1}^{X_{t-1}} W_{t,i} \, .
\]
All properties of the process $\{ X_t \}_{t\geq 0}$ (in particular, population
size and extinction probability) are captured by the equivalent process
$\{B_t\}_{t \geq 0}$, recursively defined as follows:
\[
	B_t = \left\{
		\begin{array}{ll}
			1, & t = 0;\\
			B_{t-1} + W_t - 1, & t > 0\ \text{and}\ 
			B_{t-1} > 0;\\
			0, & t > 0\ \text{and}\ B_{t-1} = 0 \, ,
		\end{array}\right.
\]
where $W_1,\ldots,W_t,\ldots$ are an infinite sequence of independent and
identically distributed copies of $W$.  
\end{definition}

In the remainder, when we refer to the Galton-Watson process generated by $W$,
we always mean the process $\{B_t\}_{t \geq 0}$.  In particular, we define
$\sigma=\min\{t>0: B_t=0\}$ (we set $\sigma=+\infty$ if no such $t$ exists).
Note that, for any $T< \sigma$, we have $B_T = \sum_{t=1}^T W_t -T$.

\section{The case $\alpha>2$}
\label{sec:alpha>2}

The aim of this section is to prove Theorem~\ref{thm:alfa>2_terminates}, which
follows from the two lemmas below.

\begin{lemma}
\label{lem:alfa>2_terminates}
Let $\alpha<2$ be a constant and $p<1$ a  percolation probability.  Sample a
graph $G=(V,E)$ from the \SWGpl distribution and let $G_p$ its   percolation
graph. For every $s \in V=\{0,\dots,n-1\}$, the connected component
$\Gamma_p(s)$ contains $\bigO(\log n)$ nodes with probability at least
$1-1/n^2$.
\end{lemma}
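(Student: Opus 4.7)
The statement as worded requires $\alpha < 2$, which cannot be the intended hypothesis: Theorems~\ref{thm:main_1alpha2} and~\ref{thm:main_1alpha} exhibit, for every $\alpha \in (0,2)$, a $p < 1$ under which $G_p$ contains a component of size $\Omega(n)$. Both the section heading and the role of this lemma as the main ingredient of Theorem~\ref{thm:alfa>2_terminates} make clear that ``$\alpha < 2$'' is a typo for ``$\alpha > 2$'', and the plan below targets that version.

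The plan is to work with the $\ell$-graph $G_p^{(\ell)}$ sketched in Section~\ref{se:contribution}, for a sufficiently large constant $\ell = \ell(\alpha, p)$. Partition $V$ into $n/\ell$ consecutive super-nodes of $\ell$ ring-nodes each; call two ring-adjacent super-nodes \emph{super-adjacent}, say they carry a super-edge iff some percolated edge of $G_p$ joins their vertices, and say two non-super-adjacent super-nodes carry a super-bridge iff at least one percolated bridge joins them. Since $\Gamma_p(s)$ is contained in the union of super-nodes reachable from $S_{\lfloor s/\ell\rfloor}$ in $G_p^{(\ell)}$, it suffices to bound the number of reachable super-nodes by $\bigO(\log n)$ with probability $1 - 1/n^2$. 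Two estimates drive the rest: (i) for any fixed super-node, summing the bridge probabilities $\sim 1/d^\alpha$ over all pairs $(u, v)$ with $u$ in that super-node and $v$ outside the super-adjacent range yields an expected number of incident super-bridges of order $\bigO(\ell^{2-\alpha})$, which vanishes as $\ell \to \infty$ precisely because $\alpha > 2$; (ii) the probability $q$ that a given super-adjacent pair carries a super-edge is some constant strictly less than $1$, independent of $\ell$ — the event ``no percolated edge among the single ring-edge and the $\Theta(\ell^2)$ potential short bridges internal to the pair'' has probability at least $(1-p)\cdot \exp\!\left(-2\sum p/d^\alpha\right) = \Omega(1)$, again because the relevant sum $\sum 1/d^\alpha$ converges for $\alpha > 2$.

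Given (i)--(ii), I would dominate the BFS exploration of the super-component of $s$ by a subcritical Galton--Watson process. The cleanest decomposition is by \emph{intervals}: maximal ring-connected runs of super-nodes in the component. Each interval grown from a seed super-node has geometrically distributed length with mean $1/(1-q) = \bigO(1)$ and a light tail; each super-node of an interval spawns new seeds (at fresh super-nodes) via super-bridges at expected rate $\bigO(\ell^{2-\alpha})$. Hence, if we consider the branching process whose state is the set of seeds and whose offspring at a given seed equals the total number of fresh seeds emitted along its interval, the expected per-seed offspring is $\bigO(\ell^{2-\alpha}/(1-q))$, which is strictly less than $1$ for $\ell$ large enough. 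Standard Chernoff bounds on the associated walk $B_t$ of Definition~\ref{def:GW} then give that the total number of seeds ever produced is $\bigO(\log n)$ except with probability at most $1/n^2$; summing the (light-tailed) lengths of the associated intervals keeps the bound $\bigO(\log n)$ with the same probability, and multiplying by $\ell = \bigO(1)$ ring-nodes per super-node finally yields $|\Gamma_p(s)| = \bigO(\log n)$.

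The main obstacle is coupling the BFS to a genuine sum of i.i.d.\ offspring variables without over-counting: when we expose the super-bridges incident to a newly discovered super-node, we must discard those whose other endpoint has already been touched, yet still stochastically dominate the remaining exposed count by an independent copy of the original super-bridge marginal. This is exactly the ``avoiding rough redundancy'' point flagged in Section~\ref{ssec:ourtech}, and is handled by revealing bridges in a history-consistent exposure order and using that unexposed bridges remain mutually independent with their original marginals. A related subtlety is that (ii) must hold uniformly in $\ell$ even when $p$ is close to $1$; the lower bound on $1-q$ relies on the $\alpha > 2$ convergence of $\sum_{u \in S_i,\, v \in S_{i+1}} 1/d(u,v)^\alpha$, which is exactly where the regime $\alpha > 2$ enters the argument, and which would fail entirely for $\alpha < 2$ — a further indication that the stated hypothesis is a typo.
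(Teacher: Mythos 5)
Your proposal is correct and takes essentially the same route as the paper's own proof: the same $\ell$-graph decomposition, the same two key estimates (constant probability that ring-expansion stops, and $\bigO(\ell^{2-\alpha})$ expected super-bridges per super-node, cf.\ Lemma~\ref{lem:alfa>2_degreeandsuperbridges}), and domination of the exploration by a subcritical branching process finished off with Chernoff bounds — and your diagnosis of the ``$\alpha<2$'' typo matches the paper, whose entire Section~\ref{sec:alpha>2} treats $\alpha>2$. The only difference is bookkeeping: where you group the exploration into geometric ring-runs that spawn bridge-seeds, the paper runs a flat BFS (Algorithm~\ref{alg:BFS-visit}) in which each dequeued super-node contributes at most one new ring-neighbor plus twice its super-bridge count, producing the same negative-drift walk as in Lemma~\ref{lem:termination_alg_alfa>2}.
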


\begin{lemma}
\label{lem:alfa>2_terminates_boundconnectedcomponent}
Under the same hypotheses of Lemma \ref{lem:alfa>2_terminates}, for every $s \in V=\{0,\dots,n-1\}$ and for any sufficiently large $\ell$, 
\begin{equation}
\label{eq:prob_setS}
\Prc{\forall u\in\Gamma_p(s): d(s,u)\le 2\ell^2} 
\ge 1-\frac{8}{(\alpha-2)\ell^{(\alpha-2)/2}} \, .
\end{equation}
\end{lemma}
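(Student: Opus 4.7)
The plan is to carry out the analysis inside the $\ell$-graph $G_p^{(\ell)}$ already introduced for the proof of Lemma~\ref{lem:alfa>2_terminates}. Let $S_0$ be the super-node containing $s$ and let $W$ be the window of super-nodes at super-ring-distance at most $2\ell-1$ from $S_0$. If every super-node in the super-component $C(S_0)$ of $S_0$ in $G_p^{(\ell)}$ lies in $W$, then each $u \in \Gamma_p(s)$ lies in a super-node at super-distance at most $2\ell-1$ from $S_0$, and since each super-node consists of $\ell$ consecutive ring positions this yields $d(s,u) \le (2\ell-1)\ell + \ell = 2\ell^2$. Hence it suffices to upper bound, by $8/((\alpha-2)\ell^{(\alpha-2)/2})$, the probability that $C(S_0)$ escapes $W$.

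I would split this bad event into (A) the super-edge-only component of $S_0$ (obtained by deleting all super-bridges from $G_p^{(\ell)}$) already leaves $W$, and (B) some super-bridge with one endpoint in $W$ has its other endpoint outside $W$. If (A) does not occur, the super-component $C(S_0)$ can exit $W$ only through a super-bridge contributing to (B). For event (A) I would reuse the property (established in the proof of Lemma~\ref{lem:alfa>2_terminates}) that two adjacent super-nodes are disconnected in $G_p^{(\ell)}$ with probability bounded below by a constant $c=c(\alpha,p)>0$, and that these events are independent for disjoint pairs of super-nodes since they depend on disjoint sets of original edges. The super-edge-only chain escapes $W$ on either side only when $2\ell-1$ consecutive super-edges are all present, so $\Pr(A) \le 2(1-c)^{2\ell-1}$, which is exponentially small in $\ell$ and absorbed in the target bound once $\ell$ is large enough.

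For event (B), I would directly estimate the expected number of super-bridges that cross $\partial W$. For $v \in W$ at super-distance $j$ from $S_0$ and for a super-node $w$ at super-ring-distance $m$ from $v$, Definition~\ref{def:small-world} yields that $(v,w)$ is a super-bridge with probability at most $\ell^{2}/((m\ell)^{\alpha} C(\alpha,n)) = \bigO(1/(m^{\alpha}\ell^{\alpha-2}))$, since $C(\alpha,n)$ is bounded below by a positive constant for $\alpha>1$. Such a super-bridge crosses $\partial W$ only when $m$ exceeds the distance $2\ell-|j|$ (resp.\ $2\ell+|j|$) from $v$ to the nearest super-node outside $W$ on one (resp.\ the other) side. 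Summing the tail estimate $\sum_{m\ge M}m^{-\alpha}\le M^{-(\alpha-1)}/(\alpha-1)$ on both sides, then summing over $v\in W$, and using the integral bound $\sum_{k=1}^{4\ell-1}k^{-(\alpha-1)}\le 1+1/(\alpha-2)$, the expected number of crossing super-bridges is at most $\bigO(1/((\alpha-2)\ell^{\alpha-2}))$; the weaker form $8/((\alpha-2)\ell^{(\alpha-2)/2})$ then follows trivially, since for $\ell\ge 1$ one has $\ell^{-(\alpha-2)}\le\ell^{-(\alpha-2)/2}$. Markov's inequality finally converts the expectation bound into the required bound on $\Pr(B)$.

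The main obstacle I expect is careful bookkeeping of the boundary effects and verifying that the bound $\bigO(1/(m^{\alpha}\ell^{\alpha-2}))$ for a single super-bridge probability is uniform in $m$: the normalizing constant $C(\alpha,n)$ must be handled correctly so that no spurious $n$-dependence appears (it is $\Theta(1)$ for $\alpha>1$), and the $\ell$ pairs of nodes inside $v$ and inside $w$ have distances differing by at most $\ell$, which only perturbs the exact prefactor and can be absorbed into the constant $8$. The remaining independence facts used at every step — independence of disjoint super-edge pairs in step~(A), and independence of disjoint potential super-bridges in step~(B) — are immediate from Definition~\ref{def:small-world} and from bond percolation being a product measure on the edges. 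Combining the exponentially small bound on $\Pr(A)$ with the polynomial bound on $\Pr(B)$ then gives the lemma.
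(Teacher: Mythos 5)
Your reduction to the $\ell$-graph, the translation of super-ring distance into the bound $d(s,u)\le 2\ell^2$, and the expectation estimate for boundary-crossing super-bridges are all fine, but the decomposition into events (A) and (B) does not cover all the ways $C(S_0)$ can escape $W$: the implication ``if (A) does not occur, then $C(S_0)$ can exit $W$ only through a super-bridge contributing to (B)'' is false. Concretely, suppose the only super-bridge incident to $W$ joins $S_0$ to a super-node $v'$ at super-distance $2\ell-2$ from $S_0$ (so it lies entirely inside $W$ and is not counted by (B)), and that the two super-edges leading from $v'$ out of $W$ are present. Then $C(S_0)$ leaves $W$, yet (A) need not hold, because after deleting all super-bridges $S_0$ is no longer connected to $v'$, so the super-edge-only component of $S_0$ may well stay inside $W$. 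In general a path can use an \emph{internal} super-bridge to jump close to $\partial W$ and then walk out along super-edges; neither of your two events sees this, so Markov plus the union of (A) and (B) does not bound the escape probability.

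The paper closes exactly this hole by conditioning on a much stronger event: \emph{no} super-node among the $k$ nearest to $\textsc{s}$ is incident to \emph{any} super-bridge, not only to a boundary-crossing one. Under that event, every path from $\textsc{s}$ that stays in the window consists solely of super-edges, hence is a contiguous arc of the super-ring, and it is blocked by the first super-node having no incident super-edges, which by the constant lower bound of Lemma~\ref{lem:alfa>2_degreeandsuperbridges} appears within distance $k$ except with probability $(1-c)^k$. The price of the stronger event is a union bound of order $k\cdot\ell^{-(\alpha-2)}$ over the whole window, which forces the window radius to be $k=\ell^{(\alpha-2)/2}$ super-nodes rather than your $2\ell-1$; this is precisely where the exponent $(\alpha-2)/2$ in the statement comes from. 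Your window of radius $\Theta(\ell)$ cannot simply be combined with that stronger event, since the union bound would become $\Theta(\ell^{3-\alpha})$, vacuous for $2<\alpha\le 3$; and nothing is lost by shrinking the window, since a radius of $O(\ell)$ super-nodes already suffices for the target $2\ell^2$. If you insist on keeping only the boundary-crossing super-bridges in (B) (which is what gives your sharper $\ell^{-(\alpha-2)}$ rate), you would need an additional event controlling long runs of consecutive super-edges that reach $\partial W$ from the landing point of an internal super-bridge --- essentially a renewal argument --- which is absent from your write-up.
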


We remark that \eqref{eq:prob_setS} implies  that, for any increasing distance
function $\ell=\ell(n) = \omega(1)$, every node in $\Gamma_p(s)$  has
ring-distance from $s$ not exceeding $2\ell^2$ with probability $1-o(1)$.

The proof of Lemma~\ref{lem:alfa>2_terminates} is given in
Subsection~\ref{ssec:lem:alfa>2_terminates}, while  the proof of
Lemma~\ref{lem:alfa>2_terminates_boundconnectedcomponent} in
Subsection~\ref{ssec:lem:alfa>2_terminates_boundconnectedcomponent}. To prove
these lemmas, we rely on the notion of $\ell$-graph, defined in the following
paragraph, together with supplementary notation that will be used in the
remainder of this section.

\paragraph{$\ell$-graphs.} In what follows, we define a new graph on the vertex
set $V=\{0,\dots,n-1\}$, starting from any one-dimensional small-world graph
$G=(V,E_1\cup E_2)$, where $(V,E_1)$ is a cycle, which defines a ring-metric,
and $E_2$ is an arbitrary subset of bridges. In the remainder, we always assume
$n\ge 3$.

\begin{definition}[\lgraph]
\label{def:l-graph}
Let  $H = (V, E_H)$ be a subgraph of a one-dimensional small-world graph $G$,
where $E_H\subseteq E_1\cup E_2$. For $\ell\ge 1$, consider an arbitrary
partition of $V$ into $n/\ell$ disjoint intervals of length $\ell$,
$\{I^{(1)},\ldots , I^{(n/\ell)}\}$ with respect to the ring metric induced by
$(V, E_1)$.  The  \emph{\lgraph}\     associated to $H$ is the graph  $\Hl{H}
=(\Vl, E_H^{(\ell)})$, where $\Vl = \{I^{(1)},\ldots , I^{(n/\ell)}\}$, and  
\[
E_H^{(\ell)} = \{(I^{(h)}, I^{(k)}):\exists u\in 
I^{(h)}, v\in I^{(k)}\text{ s.t. } (u, v)\in E_H\} \, .
\]  
\end{definition}

A generic element in $\Vl$ thus corresponds to $\ell$ consecutive nodes in the
ring $(V, E_1)$, it is called \emph{super-node}, and, depending on the context,
is denoted by $\mbox{\sc v}$ or by its corresponding interval $I_{\mbox{\sc
v}}$ in the partition $\{I^{(1)},\ldots , I^{(n/\ell)}\}$ of $V$. Fixed a
partition, we denote with $E_1^{(\ell)}$ the set of links connecting two
adjacent super-nodes in $V^{(\ell)}$ (that is, two adjacent intervals in
$(V,E_1)$): notice that $(V^{(\ell)},E_1^{(\ell)})$ is a ring of $n/\ell$
super-nodes, called \emph{$\ell$-ring}. Then, given a subgraph $H = (V, E_H)$
as in the definition above, the elements in $E_H^{(\ell)}$ can be partitioned
into two subsets: the elements in $E_H^{(\ell)} \cap E_1^{(\ell)}$ are called
\emph{super-edges}, while all the remaining elements in $E_H^{(\ell)}$ are
called \emph{super-bridges}. $H^{(\ell)}$ can thus be seen as a subgraph of a
one-dimensional small-world graph $G^{(\ell)}$ with $n/\ell$ super-nodes,
formed  by a ring $(V^{(\ell)},E_1^{(\ell)})$, and an additional set of
super-bridges.  The example in Figure \ref{fig:l-graphs} summarizes the above
definitions.
\begin{figure*}[]
    \centering
    \includegraphics[width=0.5\textwidth]{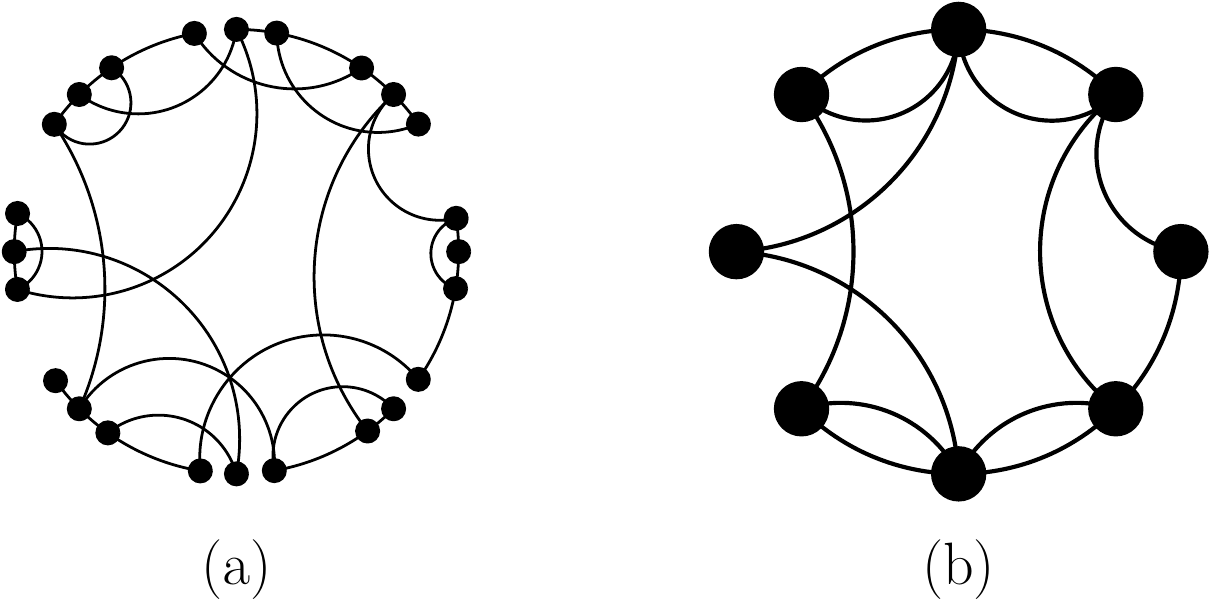}
    \caption{(a) A graph $G=(V,E)$ with a ring-metric. (b) The $3$-graph of $G$.}
    \label{fig:l-graphs}
\end{figure*}
Let $H = (V, E_H)$ as in Definition~\ref{def:l-graph} and assume $u, v\in V$,
with $u\in I^{(h)}$ and $v\in I^{(k)}$. Clearly, if $(u, v)\in E_H$ then
$(I^{(h)}, I^{(k)})\in E_H^{(\ell)}$. Moreover, the following fact
straightforwardly holds:

\begin{fact}
\label{fact:connectedcomp-GMandG}
Let $H = (V, E_H)$ as in Definition \ref{def:l-graph} and let $s\in I^{(k)}$
for some $k\in\{1,\ldots , n/\ell\}$.  Then,
\begin{equation*}
    |\Gamma_H(s)| \leq \ell	|\Gamma_{\Hl{H}}(I^{(k)})| . 
\end{equation*}
\end{fact}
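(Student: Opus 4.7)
The plan is to exploit the natural projection $\pi: V \to V^{(\ell)}$ that sends every node $u \in V$ to the unique interval $I^{(h)}$ containing it. The key observation is that, by Definition~\ref{def:l-graph}, this projection behaves like a graph homomorphism from $H$ to $H^{(\ell)}$ (apart from internal edges within a single interval, which collapse to self-loops and can be ignored for connectivity). Once this is established, the bound on $|\Gamma_H(s)|$ follows immediately from the fact that each fiber $\pi^{-1}(I^{(h)})$ has exactly $\ell$ elements.

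Concretely, I would fix an arbitrary $u \in \Gamma_H(s)$ and pick a path $s = v_0, v_1, \ldots, v_m = u$ in $H$. Applying $\pi$ entrywise gives a sequence $\pi(v_0), \pi(v_1), \ldots, \pi(v_m)$ starting at $I^{(k)}$ and ending at $\pi(u)$. For each consecutive pair $(v_i, v_{i+1})$, either both endpoints lie in the same interval, so $\pi(v_i) = \pi(v_{i+1})$, or they lie in distinct intervals $I^{(h)}, I^{(h')}$, in which case Definition~\ref{def:l-graph} yields $(I^{(h)}, I^{(h')}) \in E_H^{(\ell)}$. Deleting consecutive repetitions therefore produces a genuine walk in $H^{(\ell)}$ from $I^{(k)}$ to $\pi(u)$, so $\pi(u) \in \Gamma_{H^{(\ell)}}(I^{(k)})$.

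This proves the inclusion
\[
\Gamma_H(s) \;\subseteq\; \bigcup_{I \in \Gamma_{H^{(\ell)}}(I^{(k)})} I,
\]
and since each super-node corresponds to exactly $\ell$ nodes of $V$, taking cardinalities gives $|\Gamma_H(s)| \leq \ell \cdot |\Gamma_{H^{(\ell)}}(I^{(k)})|$. There is essentially no technical obstacle: the statement is an immediate unfolding of Definition~\ref{def:l-graph}, which was crafted precisely so that the interval-partition map preserves adjacency. The only point that warrants a line of care is handling the case $\pi(v_i) = \pi(v_{i+1})$ by collapsing repetitions, so that one genuinely obtains a walk (rather than a sequence with ``stationary'' steps) in $H^{(\ell)}$.
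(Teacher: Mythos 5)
Your proof is correct, and it is exactly the intended argument: the paper states this fact without proof as an immediate consequence of Definition~\ref{def:l-graph}, and your projection-of-paths argument (collapsing repeated super-nodes to get a walk in $\Hl{H}$, then counting $\ell$ nodes per super-node) is the straightforward justification the authors had in mind.
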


The following, preliminary lemma is used in the proofs of
Lemma~\ref{lem:alfa>2_terminates} and
Lemma~\ref{lem:alfa>2_terminates_boundconnectedcomponent}. Its proof is in
Appendix~\ref{proof of lem:alfa>2_degreeandsuperbridges}.

\begin{lemma}
\label{lem:alfa>2_degreeandsuperbridges}
Assume the hypotheses of Lemma \ref{lem:alfa>2_terminates} (and Lemma 
\ref{lem:alfa>2_terminates_boundconnectedcomponent}), so that 
$\Mgra{G_p}=(\Vl,\Mgra{E_p})$ is the \lgraph \ of $G_p$ generated by any fixed interval partition of $V$. Then, for each 
$\mbox{\sc v} \in \Vl$, we have:
\begin{equation}
    \label{eq:lem:no_bridge_GM}
    \Prc{\deg_{\Mgra{G_p}}(\mbox{\sc v}) = 0} \geq (1-p)^2 e^{-2/(\alpha-2)},
\end{equation}
\begin{equation}\label{eq:lem:no_bridge_GM_longerthan1} 
	\Prc{\text{$\mbox{\sc v}$ is incident to a super-bridge in $\Mgra{G_p}$}}\leq 
	\frac{2}{(\alpha-2)\ell^{\alpha-2}}\, .
\end{equation}
\end{lemma}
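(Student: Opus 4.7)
Both inequalities reduce to direct computations that exploit the mutual independence of all edge events in $G_p$ (inherited from $G$) together with integral bounds for power-law tails. For \eqref{eq:lem:no_bridge_GM} the event ``$\mbox{\sc v}$ is isolated'' is naturally an intersection of independent no-edge events, so I will express its probability as an independent product and lower-bound it via $1-x\ge e^{-2x}$. For \eqref{eq:lem:no_bridge_GM_longerthan1} I will apply a union bound over the potential super-bridges incident to $\mbox{\sc v}$. Both computations then reduce to estimating sums of the form $\sum_d g(d)/d^{\alpha}$, which are controlled by integrals $\int x^{-\alpha}dx$ or $\int x^{1-\alpha}dx$ and produce the $1/(\alpha-1)$ or $1/(\alpha-2)$ factors in the final bound.

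For the lower bound \eqref{eq:lem:no_bridge_GM}, I first observe that $\mbox{\sc v}$ has degree $0$ in $G_p^{(\ell)}$ if and only if no $G_p$-edge leaves $I_{\mbox{\sc v}}$. By independence this event decomposes into (a) the absence of both ring-edges connecting $I_{\mbox{\sc v}}$ to its two ring-neighbours, contributing the factor $(1-p)^2$, and (b) the absence of every bridge $(u,v)$ in $G_p$ with $u\in I_{\mbox{\sc v}}$, $v\notin I_{\mbox{\sc v}}$, $d(u,v)\ge 2$. The probability of (b) equals the independent product $\prod(1-p/(d(u,v)^{\alpha}C(\alpha,n)))$. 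Since $p/(d^{\alpha}C(\alpha,n))\le 1/2$ for $d\ge 2$ (using the crude lower bound $C(\alpha,n)\ge 2^{1-\alpha}$ coming from the $x=2$ term), the inequality $1-x\ge e^{-2x}$ applies and yields a lower bound of the form $\exp\bigl(-2\sum p/(d(u,v)^{\alpha}C(\alpha,n))\bigr)$. A pair count gives at most $2d$ pairs at ring-distance $d$ for $d\le\ell$ and at most $2\ell$ pairs for $d\ge\ell$; substituting and comparing with $\int_1^{\infty}x^{1-\alpha}dx = 1/(\alpha-2)$ produces the $e^{-2/(\alpha-2)}$ factor in the final bound.

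For the upper bound \eqref{eq:lem:no_bridge_GM_longerthan1}, the union bound yields
\[
\Pr[\mbox{\sc v}\text{ is incident to a super-bridge in }G_p^{(\ell)}]\le \sum_{\substack{u\in I_{\mbox{\sc v}}\\ v\in\text{non-adjacent super-node}}}\frac{p}{d(u,v)^{\alpha}C(\alpha,n)}.
\]
Every such $v$ is at ring-distance at least $\ell+1$ from $u$, and each $u$ has at most two nodes at any fixed ring-distance; hence the sum is bounded by a constant times $(\ell/C(\alpha,n))\sum_{d\ge\ell+1}1/d^{\alpha}$. On the summation range one has $\ell\le d$, so $\ell/d^{\alpha}\le 1/d^{\alpha-1}$, and the integral comparison $\int_{\ell}^{\infty}x^{-(\alpha-1)}dx = 1/((\alpha-2)\ell^{\alpha-2})$ produces the scaling $1/\ell^{\alpha-2}$. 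Grouping the remaining factors gives the stated bound $2/((\alpha-2)\ell^{\alpha-2})$.

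The main technical point that I expect to require care is the combinatorial bookkeeping of pair counts at each ring-distance, together with the precise distance thresholds that separate adjacent from non-adjacent super-nodes (these depend on the position of $u$ inside $I_{\mbox{\sc v}}$). Tracking the normalization constant $C(\alpha,n)$ through the integral comparisons and choosing the intermediate bounds judiciously is what ultimately yields the clean constants of the stated form rather than generic $\bigO(1/\ell^{\alpha-2})$ and $e^{-\bigO(1)/(\alpha-2)}$ bounds.
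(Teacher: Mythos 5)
Your proposal is correct and follows essentially the same route as the paper: the $(1-p)^2$ factor from the two boundary ring-edges, an independent product over the in--out bridges lower-bounded via $1-x\ge e^{-2x}$ and an integral comparison for \eqref{eq:lem:no_bridge_GM}, and a first-moment/union bound over potential super-bridges with the estimate $\ell/d^{\alpha}\le 1/d^{\alpha-1}$ for \eqref{eq:lem:no_bridge_GM_longerthan1}. The only cosmetic difference is that the paper organizes the bridge product per node of $I_{\mbox{\sc v}}$ through a preliminary tail bound on the length of the longest incident bridge (Lemma~\ref{lem:length_bridges}), whereas you group the same double sum by ring-distance with an explicit pair count; both reduce to identical integral estimates and both require the same care you flag in tracking $C(\alpha,n)$ through the final constants.
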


\subsection{Proof of Lemma \ref{lem:alfa>2_terminates}}
\label{ssec:lem:alfa>2_terminates}

To prove Lemma~\ref{lem:alfa>2_terminates}, we consider
Algorithm~\ref{alg:BFS-visit} below that performs a BFS visit of the
$\ell$-graph of the percolation graph $G_p$. Then, in
Lemma~\ref{lem:termination_alg_alfa>2}, we prove that this algorithm terminates
after visiting (only) $\mathcal{O}(\log n)$ super-nodes, w.h.p., for a
sufficiently large $\ell=\mathcal{O}(1)$. Together with
Fact~\ref{fact:connectedcomp-GMandG}, this proves
Lemma~\ref{lem:alfa>2_terminates}.

\begin{algorithm}[]
\caption{BFS visit of  $\Mgra{G_p}$}
\small{
    \begin{algorithmic}[1]
    \State \textbf{Input}: The   $\ell$-graph   $\Mgra{G_p}=(\Vl,\Mgra{E_p})$    of $G_p$;  an initiator (super-node) $\mbox{\sc s}  \subseteq V^{(\ell)}$
	\State $Q = \{\mbox{\sc s}\}$
	\While{$Q \neq \emptyset $}\label{line:while:ellgraphs}
	    \State $\mbox{\sc w}  = \dequeue(Q)$
	    \State $\texttt{visited}(\mbox{\sc w})= \texttt{True}$
 	            \For {each $\mbox{\sc x}$ s.t. $(\mbox{\sc x},\mbox{\sc w} )$ is a super-edge of $\Mgra{E_p}$
 	            and $\texttt{visited}(x)= \texttt{False}$}\label{line:ellgraphs:cond1}
 	                       \State $\enqueue(\mbox{\sc x},Q)$\label{line:ellgraph:enqueuering}
 	                  \EndFor
 	                \For{each $\mbox{\sc y}$ s.t. $(\mbox{\sc y},\mbox{\sc w})$ is a super-bridge of $\Mgra{E_p}$
 	                and $\texttt{visited}(\mbox{\sc y}) =  \texttt{False}$}\label{line:ellgraphs:cond2}
 	                \State $\enqueue(\mbox{\sc y},Q)$\label{line:ellgraph:enqueuebridges}
 	           \State $\mbox{\sc y}_{\text{left}}=\text{the super-node at distance $1$ from $\mbox{\sc y}$ on the ring at its left}$
  \If{$\texttt{visited}(\mbox{\sc y}_{\text{left}})= \texttt{False}$ and $(\mbox{\sc y}_{\text{left}},\mbox{\sc y}) \in {E_p}^{(\ell)}$}\label{line:ellgraphs:cond3}
 	                \State $\enqueue(\mbox{\sc y}_{\text{left}},Q)$
 	                   \EndIf
 	                \EndFor
 	 \EndWhile
	\end{algorithmic}}
\label{alg:BFS-visit}
\end{algorithm}

\paragraph{Remark} Note that, each time we add a super-node to queue $Q$, we
also also add the super-node to its left on the ring. So, in each while loop at
line~\ref{line:while:ellgraphs} of the algorithm, for each super-node
$\mbox{\sc w} \in Q$, its left neighbor on the ring will also be in $Q$. This
implies that, in each while loop, a single super-node is added to the queue at
line~\ref{line:ellgraph:enqueuering}.

\begin{lemma}
\label{lem:termination_alg_alfa>2}
Assume the hypotheses of Lemma~\ref{lem:alfa>2_terminates} and fix any node $s
\in V$. For any fixed interval partition\footnote{According to
Definition~\ref{def:l-graph}.} of the vertex set $V$, consider the $\ell$-graph
$\Mgra{G_p}$ and the super-node $\mbox{\textsc s}\in\Vl$ such that $s\in
I_{\mbox{\textsc s}}$. Then, a sufficiently large $\ell=\mathcal{O}(1)$ exists,
depending only on $p$ and $\alpha$, such that Algorithm~\ref{alg:BFS-visit}
terminates within $\mathcal{O}(\log n)$ iterations of the while loop in
line~\ref{line:while:ellgraphs}, with probability at least $1-1/n^2$.
\end{lemma}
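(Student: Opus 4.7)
The plan is to stochastically dominate the BFS exploration of $\Mgra{G_p}$ by a subcritical Galton-Watson branching process and then apply a Chernoff-type concentration to the associated random walk to show that extinction occurs within $\bigO(\log n)$ iterations with probability at least $1-1/n^2$.

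For each iteration $i$ of the while loop, let $\mbox{\sc w}_i$ be the dequeued super-node and let $W_i$ be the number of super-nodes newly enqueued in that iteration. Thanks to the remark preceding the lemma, at most one super-node is enqueued at line~\ref{line:ellgraph:enqueuering} (the super-edge on the left of $\mbox{\sc w}_i$ was already handled when $\mbox{\sc w}_i$ was enqueued), while each super-bridge incident to $\mbox{\sc w}_i$ contributes at most two super-nodes (namely $\mbox{\sc y}$ itself and possibly $\mbox{\sc y}_{\text{left}}$). Hence $W_i\le E_i+2B_i$, where $E_i\in\{0,1\}$ indicates the presence of the right super-edge of $\mbox{\sc w}_i$ in $\Mgra{G_p}$ and $B_i$ is the total number of super-bridges incident to $\mbox{\sc w}_i$. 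By the principle of deferred decisions, the edges whose status is first inspected at iteration $i$ are independent of the past; ignoring the ``unvisited'' restriction only increases offspring counts, so the sequence $\{(E_i,B_i)\}$ can be coupled with an i.i.d.\ sequence $\{(E^*_i,B^*_i)\}$ whose common marginal is that of an arbitrary super-node of $\Mgra{G_p}$.

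Next, I show that $\mu:=\mathbb{E}[E^*+2B^*]<1$ for a sufficiently large constant $\ell=\ell(p,\alpha)$. From Lemma~\ref{lem:alfa>2_degreeandsuperbridges} (specifically, the expected-number-of-incident-bridges estimate that underlies its proof), $\mathbb{E}[B^*]=\bigO(1/\ell^{\alpha-2})$. For the super-edge, $\Prc{E^*=0}$ is bounded below by the probability that the right ring-edge is absent \emph{and} no bridge lies between the two adjacent intervals; using independence of bridges together with the fact that the expected number of bridges between two adjacent intervals of length $\ell$ is $\bigO(1)$ for $\alpha>2$, this yields $\Prc{E^*=0}\ge \delta$ for some constant $\delta=\delta(p,\alpha)>0$ uniformly in $\ell$. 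Combining, $\mu\le 1-\delta+\bigO(1/\ell^{\alpha-2})$, which is strictly less than $1$ for $\ell$ a large enough constant.

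From the queue-size identity $|Q_t|=1+\sum_{i=1}^t(W_i-1)$, the algorithm runs for more than $T$ iterations only if $\sum_{i=1}^T W_i\ge T$, and by the coupling this is implied by $\sum_{i=1}^T(E^*_i+2B^*_i)\ge T$. Since the $E^*_i+2B^*_i$ are i.i.d.\ with mean $\mu<1$ and have bounded moment generating function near $0$ (as $B^*$ is a sum of independent Bernoullis with small total expectation), a standard exponential Markov/Chernoff bound yields $\Prc{\sum_{i=1}^T(E^*_i+2B^*_i)\ge T}\le e^{-cT}$ for a constant $c=c(p,\alpha)>0$. Choosing $T=\Theta(\log n)$ large enough then gives the $1/n^2$ bound. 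The main obstacle is the coupling step: the $W_i$'s genuinely depend on which super-nodes have already been visited and on shared bridge randomness, and the left-neighbor enqueueing trick built into Algorithm~\ref{alg:BFS-visit} is precisely what enforces the ``at most one super-edge per iteration'' bound, and thereby keeps the branching-process mean strictly below $1$.
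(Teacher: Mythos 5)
Your proposal is correct and follows essentially the same route as the paper: decompose each iteration's offspring into the (at most one) super-edge contribution and twice the super-bridge contribution, bound $\Prc{E^*=1}\le 1-\delta$ and $\Expcc{B^*}=\bigO(1/\ell^{\alpha-2})$ via Lemma~\ref{lem:alfa>2_degreeandsuperbridges}, dominate by i.i.d.\ copies, and kill the walk $\sum_i(W_i-1)$ with a Chernoff bound after choosing $\ell=\bigO(1)$ large enough to make the drift negative. The only cosmetic difference is that you apply a single exponential bound to the combined sum where the paper applies Chernoff separately to $\sum_i|X_i|$ and $\sum_i|Y_i|$.
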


\begin{proof}
For $t=1,2,\dots$, let $Q_t$ contents of $Q$ at the end of the $t$-th iteration
of the while loop of Algorithm~\ref{alg:BFS-visit} and let $W_t$ denote the
subset of super-nodes that were added to $Q$ during the $t$-th iteration. We
have $|Q_0|=1$ and
\begin{equation*}
    |Q_t| = \begin{cases} 0 \text{ if $|Q_{t-1}|=0$} \\ |Q_{t-1}|+|W_t|-1 \text{ otherwise},
    \end{cases}
\end{equation*}

Let $X_t$ and $Y_t$ denote the sets of super-nodes that were added to $Q$ in
the $t$-th iteration of the while loop, respectively at
line~\ref{line:ellgraph:enqueuering} and at
line~\ref{line:ellgraph:enqueuebridges}.  For $t \geq 2$, whenever a super-node
is added to the queue, the queue contains a super-node at ring-distance $1$
from it on the ring $(V^{(\ell)},E_1^{(\ell)})$, so that $|W_t|= |X_t|+2|Y_t|$,
where $|X_t|\leq 1$ for $t \geq 2$ and $|X_1|\leq 2$\,. Let $\delta=1-p$,
$\epsilon=2-\alpha$ and note that, from~\eqref{eq:lem:no_bridge_GM}
and~\eqref{eq:lem:no_bridge_GM_longerthan1}, 

\begin{equation}
\label{eq:prop_Xt_Yt}
    \Prc{|X_t|=1} \leq 1-\delta^2 e^{-2/\epsilon} \quad \text{and} 
    \quad \Expcc{|Y_t|} \leq \frac{2}{\epsilon \ell^\epsilon}.
\end{equation}
Moreover, note that for every $t$, $|Q_t|=0$, whenever $\sum_{i=1}^t|W_i|\leq
t$. Then, we can write
\begin{equation*}
    \sum_{i=1}^t|W_i|-t = \sum_{i=1}^t |X_i| + 2\sum_{i=1}^{t}|Y_i|-t\,,
\end{equation*}
where $|X_i|$ are $\{0,1\}$ random variables and, if we $w$ is the node
extracted from $Q$ in the $i$-th iteration of the while loop, $|Y_i|$ is the
number of super-bridge neighbors of $\mbox{\textsc w}$, so that it can also be
written as the sum of random variables in $\{0,1\}$. Note that   $\{X_t\}_t$
and $\{Y_t\}_t$ are not independent random variables, because of the conditions
that appear in lines \ref{line:ellgraphs:cond1}, \ref{line:ellgraphs:cond2} and
\ref{line:ellgraphs:cond3} of Algorithm \ref{alg:BFS-visit}. Still, it is easy
to show that $X_t$'s and $Y_t$'s are dominated by independent copies of two
random variables $X$ and $Y$, such that $\Prc{|X|=1}=1-\delta^2
e^{-2/\epsilon}$ and $\Expcc{|Y|} \leq 1/(\epsilon \ell^\epsilon)$.  Hence,
Chernoff bound and \eqref{eq:prop_Xt_Yt} imply that, for a certain
$t=\mathcal{O}(\log n)$ depending on $\delta$ and $\epsilon$,
\begin{align*}
    &\Prc{\sum_{i=1}^t|X_i| \geq 
    \left(1-\frac{\delta^2}{2}e^{-2/\epsilon}\right)t}\leq 
    \frac{1}{2n^2},\\ 
    &\Prc{\sum_{i=1}^t|Y_i| \geq \frac{2t}{\epsilon \ell^\epsilon}}\leq 
    \frac{1}{2n^2}.
\end{align*}
As a result, with probability at least $1-1/n^2$, we have:
\[
	\sum_{i=1}^t|X_i| +2\sum_{i=1}^t|Y_i| -t \leq t\left(\frac{4}{\epsilon \ell^\epsilon} -\frac{\delta^2}{2}e^{-2/\epsilon}\right).
\]
Hence, we can choose $\ell=\mathcal{O}(1)$ (depending only on $\epsilon$ and
$\delta$) large enough, so that with probability at least $1-1/n^2$,
$\sum_{i=1}^t|W_i| < t$, so that $|Q_t|=0$ for $t=\mathcal{O}(\log n)$.
\end{proof}

\subsection{Proof of Lemma \ref{lem:alfa>2_terminates_boundconnectedcomponent}}
\label{ssec:lem:alfa>2_terminates_boundconnectedcomponent}

Lemma~\ref{lem:alfa>2_terminates_boundconnectedcomponent} follows from
Fact~\ref{fact:connectedcomp-GMandG} and Lemma~\ref{le:small_distance} below.

\begin{lemma}\label{le:small_distance}
Assume the hypotheses of
Lemma~\ref{lem:alfa>2_terminates_boundconnectedcomponent}. For sufficiently
large $\ell$ depending only on $p$ and $\alpha$, consider the $\ell$-graph
$\Mgra{G_p}$ and the super-node $\textsc{s} \in\Vl$ such that $s\in
I_{\textsc{s}}$. Then, with probability at least 
\[
1-\tfrac{8}{(\alpha-2)}\cdot  \ell^{-(\alpha-2)/2}\, ,
\]
all nodes in $\Gamma_{\Mgra{G_p}}(\textsc{s})$ are within ring-distance $\ell$
from $\textsc{s}$ in the ring $(\Mgra{V},\Mgra{E_1})$. 

\end{lemma}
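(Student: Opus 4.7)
The plan is to exploit the independence between the random events governing super-edges and those governing super-bridges, reducing the analysis to the easy super-edge-only process. I would introduce the auxiliary random set $\Gamma^{\mathrm{SE}}(\textsc{s})$, i.e.\ the connected component of $\textsc{s}$ in the subgraph of $\Mgra{G_p}$ obtained by discarding all super-bridges, and then argue that $\Gamma_{\Mgra{G_p}}(\textsc{s})$ coincides with $\Gamma^{\mathrm{SE}}(\textsc{s})$ with the required probability.

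The key observation is that whether a super-edge between two adjacent super-nodes is present depends only on the single ring-edge and on the bridges connecting the two corresponding adjacent intervals of length $\ell$, whereas whether a super-bridge between two non-adjacent super-nodes is present depends only on bridges between those non-adjacent intervals. Hence these two families of events involve disjoint collections of Bernoulli random variables and are \emph{mutually independent}; similarly, super-edges associated with disjoint adjacent pairs of intervals are independent of each other. Since $\alpha > 2$, the expected number of bridges between any two fixed adjacent intervals is bounded by a constant independent of $\ell$ (the relevant series $\sum_d d\cdot d^{-\alpha}$ converges), so each super-edge is present with probability at most some constant $q=q(p,\alpha)<1$. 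It follows that $|\Gamma^{\mathrm{SE}}(\textsc{s})|$ is stochastically dominated by $1+2X$ with $X$ geometric of parameter $1-q$, giving $\Prc{|\Gamma^{\mathrm{SE}}(\textsc{s})|>\ell}\le 2q^{\ell/2}$ (exponentially small in $\ell$) and $\Expcc{|\Gamma^{\mathrm{SE}}(\textsc{s})|}=(1+q)/(1-q)$, a constant depending only on $p$ and $\alpha$.

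Next I would observe that if no super-bridge of $\Mgra{G_p}$ is incident to any super-node of $\Gamma^{\mathrm{SE}}(\textsc{s})$, then every $\Mgra{G_p}$-path starting from $\textsc{s}$ uses only super-edges, so $\Gamma_{\Mgra{G_p}}(\textsc{s})=\Gamma^{\mathrm{SE}}(\textsc{s})$ and the whole component lies on the super-ring $(\Mgra{V},\Mgra{E_1})$ within ring-distance $|\Gamma^{\mathrm{SE}}(\textsc{s})|$ of $\textsc{s}$. Conditioning on $\Gamma^{\mathrm{SE}}(\textsc{s})$ and using the independence noted above, a union bound combined with inequality \eqref{eq:lem:no_bridge_GM_longerthan1} of Lemma~\ref{lem:alfa>2_degreeandsuperbridges} bounds the conditional probability that some super-node of $\Gamma^{\mathrm{SE}}(\textsc{s})$ is incident to a super-bridge by $|\Gamma^{\mathrm{SE}}(\textsc{s})|\cdot \tfrac{2}{(\alpha-2)\ell^{\alpha-2}}$. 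Taking expectation over $\Gamma^{\mathrm{SE}}(\textsc{s})$ yields an unconditional bound of order $\tfrac{1}{(\alpha-2)\ell^{\alpha-2}}$, up to the constant $\Expcc{|\Gamma^{\mathrm{SE}}(\textsc{s})|}$.

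Summing the two contributions and absorbing the exponentially small tail $q^{\ell/2}$ into the polynomial term for sufficiently large $\ell$ yields a bound of the claimed form $\tfrac{8}{(\alpha-2)\ell^{(\alpha-2)/2}}$. The main obstacle is that the prefactor $\Expcc{|\Gamma^{\mathrm{SE}}(\textsc{s})|}=(1+q)/(1-q)$ degenerates as $\alpha\downarrow 2$, since $q$ grows with the expected bridge mass between adjacent intervals; it is precisely in order to absorb this $\alpha$-dependent prefactor into the clean form $8/(\alpha-2)$ that the exponent in the final inequality has to be loosened from $\alpha-2$ to $(\alpha-2)/2$, which costs a square root in $\ell$ but leaves a uniformly controlled constant.
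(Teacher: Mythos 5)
Your proof is correct and follows essentially the same strategy as the paper's: first control the reach of the component using super-edges alone (the paper does this via the ring-distance $K$ to the nearest super-node with no incident super-edges, which is exactly your geometric domination of the arc $\Gamma^{\mathrm{SE}}(\textsc{s})$), then use inequality \eqref{eq:lem:no_bridge_GM_longerthan1} and a union bound to rule out super-bridges touching that arc. The only cosmetic difference is that you condition on the random set $\Gamma^{\mathrm{SE}}(\textsc{s})$ and integrate, whereas the paper fixes a deterministic cutoff $k=\ell^{(\alpha-2)/2}$ and bounds the two bad events $\{K>k\}$ and $B_k^C$ separately; both yield the stated $1-\tfrac{8}{(\alpha-2)}\ell^{-(\alpha-2)/2}$ guarantee.
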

\begin{proof}
Without loss of generality, we assume $\textsc{s}=0$ and let
$\Mgra{V}=\{0,\dots,n/\ell\}$\,. Next, we consider nodes on the ring at
increasing distance from $\textsc{s}=0$ moving counter-clockwise, proving that
nodes at distance exceeding $\ell^{(2-\alpha)/2}$ are not part of
$\Gamma_{\Mgra{G_p}}(\textsc{s})$, the connected component of $\textsc{s}$. To
this purpose, denote by $K$ be the random variable indicating the ring-distance
of the super-node $\textsc{v}$ closest to $\textsc{s}$, such that $\textsc{v}$
has no incident super-edges in $\Mgra{G_p}$.  From \eqref{eq:lem:no_bridge_GM},
and setting $\epsilon=2-\alpha$ and $\delta=1-p$, we have
\[
	\Prc{\text{\textsc{v} has no super-edges in $\Mgra{G_p}$}}\geq \delta^2 e^{-2/\epsilon},
\]
So, for $k=\ell^{\epsilon/2}$ and for $\ell$ large enough
\[
	\Prc{K >k} \leq (1-\delta^2 e^{-2/\epsilon})^k \leq \frac{1}{\ell^{\epsilon/2}}\,.
\]
Moreover, denote by $B_k$ be the following event
\[
B_k=\{\text{the $k$ nodes nearest to $\textsc{s}$ have no super-bridges}\}.
\]
From~\eqref{eq:lem:no_bridge_GM_longerthan1}, from the independence of the edge
percolation events and using a union bound, we have for the complementary event
$B_k^C$
\[
	\Prc{B_k^C}\leq k \cdot \frac{2}{\epsilon \ell^\epsilon}\,.
\]
Iterating the same argument for nodes on the clockwise side of 
$\textsc{s}$, if $k=\ell^{\epsilon/2}$ we have
\begin{align*}
    &\Prc{\text{there is a node at distance $\leq k$ from $\textsc{s}$ in $\Mgra{\Gamma}_p(\textsc{s})$}} \\ 
    &\leq 2\Prc{\{K>k\} \cup B_k^C} \leq \frac{4}{\ell^{\epsilon/2}}+\frac{4}{\epsilon \ell^{\epsilon/2}},
\end{align*}
which completes the proof.
\end{proof}

\section{The case $1< \alpha < 2$}
\label{sec:1<alpha<2}

The aim of this section is to prove Theorem~\ref{thm:main_1alpha2}, which
follows from the two lemmas below.

\begin{lemma}
\label{lem:1alpha2_underthreshold}
Under the hypotheses of Theorem \ref{thm:main_1alpha2}, assume $G=(V,E)$ is
sampled from \SWGpl and let $G_p$ be its percolation graph. Then two constants
$\overline{p}<1$ and $\eta>1$ exist such that, if $p>\overline{p}$, w.h.p.
there is a set of $\Omega(n)$ nodes in $G_p$ that induces a connected
sub-component with diameter $\bigO(\log^\eta n)$.
\end{lemma}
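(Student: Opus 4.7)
The plan is to follow the hierarchical interval-partitioning strategy sketched in the technical overview, which adapts the ideas of Newman--Schulman and Benjamini--Berger to the setting in which the ring-edges are themselves subject to percolation. I will fix a constant $\beta$ in the non-empty interval $(1,2/\alpha)$, pick $N_0$ a sufficiently large constant, and work with the scales $N_k=\lfloor N_{k-1}^{\beta}\rfloor$. I call an interval $I$ of $N_k$ consecutive ring-nodes \emph{good} if there is a subset $S_I\subseteq I$ with $|S_I|\ge (1-\varepsilon_k)N_k$ such that the subgraph of $G_p$ induced on $S_I$, using only edges of $G_p$ with both endpoints inside $I$, is connected and has diameter at most $D_k$. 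The inductive claim to establish is that any such interval is good with probability at least $1-\delta_k$, for sequences $(\varepsilon_k)$, $(\delta_k)$ decaying fast enough and $D_k$ obeying a recursion $D_k\le 2D_{k-1}+1$.

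For the base case I would take $p$ close enough to $1$ so that, by a direct Chernoff estimate on the internal degree distribution inside an interval of size $N_0$, any such interval is good with arbitrarily small failure probability $\delta_0$. For the inductive step, fix an interval $I_k$ of size $N_k$, partition it into $M=N_k/N_{k-1}$ sub-intervals $J_1,\dots,J_M$ of length $N_{k-1}$, and observe that since goodness of $J_i$ depends only on edges with both endpoints in $J_i$, the events $\{J_i\text{ is good}\}$ are mutually independent. A Chernoff bound then yields that at least a $(1-\varepsilon_k')$-fraction of the $J_i$'s are good (call these \emph{good sub-intervals}, with associated large sets $S_i$) with failure probability exponentially small in $M$.

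The crux, which I would isolate in an auxiliary lemma in the spirit of Lemma~\ref{lem:key_1alpha2}, is to show that in the auxiliary graph on good sub-intervals, where $J_i\sim J_j$ whenever some bridge of $G_p$ with both endpoints inside $I_k$ joins $S_i$ to $S_j$, all pairs are in fact directly connected. For any two good sub-intervals, their ring-distance is at most $N_k$, so the expected number of internal cross-bridges between $S_i$ and $S_j$ is of order $p(1-\varepsilon_{k-1})^2 N_{k-1}^2/N_k^\alpha=\Theta(N_{k-1}^{2-\alpha\beta})$, which \emph{diverges} with $k$ thanks to the choice $\beta<2/\alpha$. A union bound over $O(M^2)$ pairs then shows that all pairs of good sub-intervals are linked by a single internal bridge, with total failure probability $\delta_k$ summable in $k$. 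Combining this with the inductive description of the $S_i$ produces a connected set in $I_k$ of size at least $(1-\varepsilon_k)N_k$ and diameter $D_k\le 2D_{k-1}+1$. Iterating, $D_k=\bigO(2^k)$, and since $N_k\asymp N_0^{\beta^k}$ the level $K$ at which $N_K=\Theta(n)$ satisfies $K=\Theta(\log\log n)$, whence $D_K=\bigO(\log^\eta n)$ with $\eta=\log 2/\log\beta>1$, while $\sum_{k\le K}\delta_k=\bigO(n^{-\varepsilon})$.

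The main obstacle, and the point where the argument genuinely departs from Benjamini--Berger, is the percolation of the ring-edges themselves. Two consequences must be handled simultaneously: (i) the ring can no longer be used to absorb the bad fraction of each $J_i$ into a monolithic connected component, so each inductive step loses a genuine $\varepsilon_k$-fraction of nodes, forcing the $\varepsilon_k$'s to be summable in order to retain $\Omega(n)$ surviving nodes at the top level; and (ii) to keep the goodness events of disjoint sub-intervals mutually independent, the connectivity certificate at level $k$ must use only edges whose \emph{both} endpoints lie inside $I_k$, which forces a delicate calibration of $\beta$, $\varepsilon_k$ and $\delta_k$ so that the power-law bridge budget available strictly inside $I_k$ simultaneously drives the per-pair connection probability close to one and squeezes $\delta_k$ down to $n^{-\varepsilon}$. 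Striking this balance is the technical heart of the proof and is precisely where the choice $\beta\in(1,2/\alpha)$ enters.
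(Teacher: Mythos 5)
Your proposal follows essentially the same route as the paper's proof: the same geometric hierarchy of scales $N_k\approx N_0^{\beta^k}$ with $1<\beta<2/\alpha$, the same notion of a good interval certified only by edges internal to it (precisely so that goodness of disjoint sub-intervals is independent), the same two-step inductive argument (Chernoff on the number of good sub-intervals, then a union bound over pairs using that the expected number of internal cross-bridges $\Theta(N_{k-1}^{2-\alpha\beta})$ diverges), the same diameter recursion $D_k\le 2D_{k-1}+1$, and the same exponent $\eta=\log_\beta 2$; this is exactly the content of the paper's Lemma~\ref{lem:key_1alpha2} and its iteration. Two small slips in your write-up are worth fixing. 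First, the base case cannot be established by ``a Chernoff estimate on the internal degree distribution'': bounding degrees says nothing about connectivity or diameter. The paper's (and the natural) certificate is simply that \emph{every} ring edge inside the bounded-size base interval survives percolation, which happens with probability $p^{N_h}$ and is the reason $p$ must be taken close to $1$; this gives $\varepsilon_h=0$, $D_h=N_h$, $\delta_h=1-p^{N_h}$. Second, the overall failure probability is not $\sum_{k\le K}\delta_k$ --- that sum is dominated by the constant-order terms at the bottom levels and is certainly not $\bigO(n^{-\varepsilon})$. Rather, the per-level Chernoff step absorbs the previous level's $\delta_{k}$ into the guaranteed \emph{fraction} of good sub-intervals (and hence into $\varepsilon_{k+1}$), so each $\delta_{k+1}$ is freshly of order $C_{k+1}^{-\Theta(1)}$, and what you need (and get) is only that the top-level $\delta_K$ is polynomially small; it is the $\varepsilon_k$'s, not the $\delta_k$'s, whose sum must be controlled to retain $\Omega(n)$ nodes.
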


\begin{lemma}
\label{lem:1alpha2_belowthreshold}
Under the hypotheses of Theorem \ref{thm:main_1alpha2}, assume $G=(V,E)$ is
sampled from \SWGpl and let $G_p$ its percolation graph.  Then, a constant
$\underline{p}>0$ exists  (in particular, $\underline{p}=1/3$) such that, if
$p<\underline{p}$, w.h.p. each connected component of $G_p$ has size at most
$\bigO(\log n)$.
\end{lemma}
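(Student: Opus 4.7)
The plan is to bound the connected component of any fixed vertex $s$ by dominating a BFS exploration in $G_p$ with a subcritical Galton--Watson branching process, then to union bound over $s$. The key observation is that by the normalization in Definition~\ref{def:small-world}, for every $v \in V$ one has
\[
\Expcc{\deg_{G_p}(v)} \;=\; 2p + p \sum_{L=2}^{n/2} \frac{2}{L^{\alpha} C(\alpha,n)} \;=\; 3p,
\]
so choosing $p < 1/3$ makes the expected $G_p$-degree strictly less than $1$. This is the regime in which a branching-process comparison yields exponentially small component-size tails.

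Fix $s \in V$ and perform a BFS in $G_p$ starting from $s$, revealing the random edges of $G_p$ only when needed (the standard ``explore-as-you-go'' coupling). Let $Z_t$ be the size of the BFS frontier at step $t$, with $Z_0 = 1$. When we process a frontier vertex $v$, the number of newly discovered vertices is at most the number of $G_p$-edges from $v$ to previously unexplored vertices; because those edges (both ring and bridge candidates) are still independent Bernoulli variables after all prior exposures, this count is stochastically dominated by an independent copy $\tilde D$ of $\deg_{G_p}(v)$. Hence $Z_t$ is dominated by $\sum_{i=1}^{Z_{t-1}} \tilde D_i$ with $\tilde D_i$ i.i.d., i.e.\ by a Galton--Watson process of offspring mean $3p < 1$ in the sense of Definition~\ref{def:GW}.

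The offspring distribution $\tilde D$ is a sum of independent Bernoullis, hence has a moment generating function finite on all of $\R$ with $\Expcc{e^{\lambda(\tilde D - 1)}} < 1$ for some small $\lambda > 0$ (by strict subcriticality and standard convex-analysis: the function $\lambda \mapsto \log \Expcc{e^{\lambda(\tilde D -1)}}$ has value $0$ and derivative $3p-1 < 0$ at $\lambda = 0$). Writing $B_t = 1 + \sum_{i=1}^t (\tilde D_i - 1)$ for the equivalent queue process and applying the Chernoff bound gives
\[
\Prc{ |\Gamma_p(s)| \geq k } \;\leq\; \Prc{B_t > 0 \text{ for all } t \leq k}
\;\leq\; \bigl(\Expcc{e^{\lambda(\tilde D - 1)}}\bigr)^{k} \;=\; e^{-c k}
\]
for a constant $c = c(p) > 0$. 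Choosing $k = C \log n$ with $C$ large enough that $cC \geq 3$ yields $\Prc{|\Gamma_p(s)| \geq C \log n} \leq n^{-3}$, and a union bound over the $n$ possible choices of $s$ establishes the lemma with $\underline p = 1/3$.

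The only real subtlety is the domination step: one must check that when the BFS queries $v$, the edges from $v$ to the not-yet-exposed part of $V$ are still jointly independent Bernoullis with the original parameters, so that their number is dominated by a fresh copy of $\deg_{G_p}(v)$. This is the standard ``principle of deferred decisions'', but because bridges between arbitrary pairs are independent in \SWGpl\ (unlike in Kleinberg's model), the argument is clean and no further geometric work is needed. Everything else is a routine subcritical-branching-process tail bound.
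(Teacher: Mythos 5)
Your proposal is correct and follows essentially the same route as the paper's own proof: both dominate the BFS exploration of $\Gamma_p(s)$ by a Galton--Watson process with offspring mean $3p<1$ (using that each node's expected $G_p$-degree is $3p$ and that the exposed edge counts are dominated by independent copies via deferred decisions), then apply a Chernoff/exponential-moment bound to show the queue process dies within $O(\log n)$ steps, and finish with a union bound over $s$. The only cosmetic difference is that you phrase the tail bound through the moment generating function of $\tilde D-1$ while the paper applies the Chernoff bound for sums of Bernoullis directly to $\sum_{i=1}^T W_i$ at $T=\Theta(\log n)$; these are the same estimate.
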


The proof of Lemma~\ref{lem:1alpha2_underthreshold} is given in
Subsection~\ref{ssec:lem:1alpha2_underthreshold}, while the proof of
Lemma~\ref{lem:1alpha2_belowthreshold} follows an  approach, based on
Galton-Watson processes, similar to that of Lemma~\ref{lem:alfa>2_terminates},
and it is given in Appendix~\ref{ssec:lem:1alpha2_belowthreshold}. 

\subsection{Proof of Lemma~\ref{lem:1alpha2_underthreshold}}
\label{ssec:lem:1alpha2_underthreshold}
The lemma that follows is our new key ingredient to  apply an inductive
approach similar to the one in \cite{BenjaminiB01} in the case of full-bond
percolation.

\begin{figure*}[]
    \centering
    \includegraphics[width=0.7\textwidth]{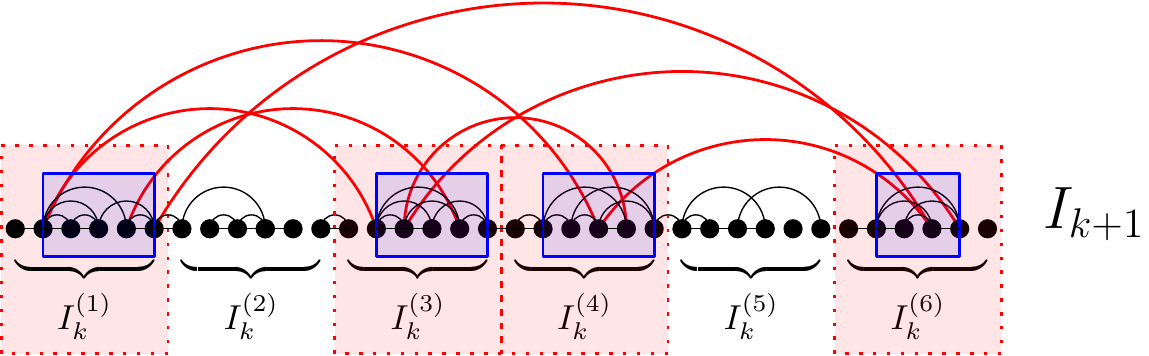}
    
    \caption{Visualization of the proof of
    Lemma~\ref{lem:1alpha2_underthreshold}. Red dotted boxes identify ``good''
    intervals in $H_{k+1}$ (intervals for which the event $A_k$ holds); for
    each good box, the included blue box contains the subset of nodes that
    induce a connected subgraph of size at least $(1-\varepsilon_k)N_k$ nodes
    and diameter $D_k$. In the lemma, we have to prove the existence of at
    least a certain number of red boxes, and that blue boxes are all mutually
    connected (i.e. the existence of the red edges in the picture). }
    
    \label{fig:renormalization}
\end{figure*}

\begin{lemma}\label{lem:key_1alpha2}
Assume the hypotheses of Lemma \ref{lem:1alpha2_underthreshold}, let $\beta =
\frac{1}{2}\alpha(3-\alpha)$ and
\[
N_k = e^{\beta^k}\quad \text{and} \quad C_k = e^{\beta^{k-1}(\beta-1)}.
\]
Let $I_k$ be an arbitrary interval of $N_k$ adjacent nodes in the cycle
$(V,E_1)$, $D_k$ any finite integer. Define the following event:

\begin{equation*}
	A_k = \{\exists S\subseteq I_k: |S|\geq 
	(1-\varepsilon_k)N_k\ \wedge \diam_p(S)\le D_k\}.
\end{equation*}
Assume further that, for suitable, real constants $\varepsilon_k,\delta_k$ and
$p_k$ in $(0,1)$,

\begin{equation*}
    \Prc{A_k} \geq 1-\delta_k, \quad \text{if $p\geq p_k$}.
\end{equation*}
Then, if we consider an interval $I_{k+1}$ of $N_{k+1}$ adjacent nodes, it holds:  
\[
\Prc{A_{k+1}}\geq 1-\delta_{k+1}, \quad \text{ if $p \geq p_{k+1}$},
\]
where
\begin{align*}
	&\delta_{k+1} = 2C_{k+1}^{-0.2}, \quad \varepsilon_{k+1} = 
	\varepsilon_k+\delta_k+C_{k+1}^{-0.2}, \\
	& D_{k+1} = 2D_{k}+1, 
	\quad p_{k+1} = 
	\frac{0.9(\alpha-1)(2-\alpha)}{(1-\varepsilon_k)^2(4.2-2\alpha)}C(n,\alpha). 
\end{align*}
\end{lemma}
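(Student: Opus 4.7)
The plan is to implement a renormalization step, adapting the partition method sketched in the technical overview so as to cope with full-bond percolation (rather than long-range percolation where the ring is deterministic). Partition $I_{k+1}$ into $C_{k+1}=N_{k+1}/N_k$ consecutive sub-intervals $J_1,\dots,J_{C_{k+1}}$, each of length $N_k$, and declare $J_i$ to be \emph{good} if the event $A_k$ holds for $J_i$ using only edges of $G_p$ whose two endpoints both lie in $J_i$, i.e., if there exists $S_i\subseteq J_i$ with $|S_i|\geq (1-\varepsilon_k)N_k$ inducing, via those internal edges only, a connected subgraph of diameter at most $D_k$. Since these events depend on disjoint sets of ring- and bridge-edges, they are mutually independent; by the inductive hypothesis each of them has probability at least $1-\delta_k$ whenever $p\ge p_k$, and $p_{k+1}$ will be chosen so that $p_{k+1}\ge p_k$.

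The argument then has three steps. \emph{Step 1 (concentration).} By a multiplicative Chernoff bound applied to the indicators of the good sub-intervals, with probability at least $1-C_{k+1}^{-0.2}$ the set $\mathcal G$ of good sub-intervals has size at least $(1-\delta_k-C_{k+1}^{-0.2})C_{k+1}$ (the slack $C_{k+1}^{-0.2}$ is comfortably absorbed because $C_{k+1}$ grows doubly exponentially in $k$). \emph{Step 2 (pairwise bridge connectivity).} For $J_i,J_j\in\mathcal G$ at super-distance $m=|j-i|$, every pair $(u,v)$ with $u\in S_i$, $v\in S_j$ has ring distance at most $(m+1)N_k$, so the bridge $(u,v)$ survives in $G_p$ with probability at least $p/(((m+1)N_k)^\alpha C(n,\alpha))$; the number of such candidate pairs is at least $(1-\varepsilon_k)^2 N_k^2$, hence by independence
\[
\Prc{\text{no }G_p\text{-edge between }S_i\text{ and }S_j}\;\leq\;\exp\!\left(-\frac{p(1-\varepsilon_k)^2 N_k^{2-\alpha}}{(m+1)^\alpha\,C(n,\alpha)}\right).
\]
Summing this bound over all pairs (with multiplicity $C_{k+1}-m$ at super-distance $m$, $m=1,\dots,C_{k+1}-1$) and choosing $p\ge p_{k+1}$ as in the statement bounds the total failure probability by $C_{k+1}^{-0.2}$. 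The specific constants $0.9$, $\alpha-1$, $2-\alpha$ and $4.2-2\alpha$ in the formula for $p_{k+1}$ are precisely what emerge when one evaluates this sum using $C(n,\alpha)\le 2/(\alpha-1)$ and demands that the resulting tail match $C_{k+1}^{-0.2}$ (the factor $(1-\varepsilon_k)^2$ in the denominator of $p_{k+1}$ cancels the corresponding factor in the exponent). \emph{Step 3 (assembly).} On the intersection of the two ``good'' events, define $S=\bigcup_{J_i\in\mathcal G}S_i$. Any $u\in S_i$ and $v\in S_j$ are joined in $G_p$ by a length-$\le D_k$ path inside $S_i$ to one endpoint of an $(S_i,S_j)$-bridge, the bridge itself, and a length-$\le D_k$ path inside $S_j$, for a total of at most $2D_k+1=D_{k+1}$ hops. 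For the size,
\[
|S|\;\ge\;(1-\delta_k-C_{k+1}^{-0.2})(1-\varepsilon_k)\,N_{k+1}\;\ge\;(1-\varepsilon_k-\delta_k-C_{k+1}^{-0.2})\,N_{k+1}\;=\;(1-\varepsilon_{k+1})\,N_{k+1},
\]
and a union bound on the two bad events yields $\Prc{A_{k+1}}\ge 1-2C_{k+1}^{-0.2}=1-\delta_{k+1}$.

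The main obstacle is Step 2: one has to track the pair-failure probability as a function of the super-distance $m$ and union-bound over $\Theta(C_{k+1}^2)$ pairs, while keeping the required $p_{k+1}$ below $1$ uniformly in $k$ (the latter requires boundedness of the sequence $\varepsilon_k$, which in turn follows from summability of $\delta_k+C_{k+1}^{-0.2}$). The key quantitative fact that makes the bookkeeping go through is that $2-\alpha\beta=2-\alpha^2(3-\alpha)/2>0$ for every $\alpha\in(1,2)$: this forces the expected number of percolated $(S_i,S_j)$-bridges to grow doubly exponentially in $k$ even for the most distant pairs, which easily swallows the $C_{k+1}^2$-factor from the union bound. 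A second subtle point, specific to the full-bond setting and absent from the long-range-percolation argument of \cite{BenjaminiB01}, is the insistence on using only edges \emph{internal} to each sub-interval at every level: this is what decouples the good-events across sub-intervals, enables the independence used in Step 1, and, looking ahead to the base case of the overall induction, keeps the good-events across distinct intervals $I_{k+1}$ independent as well.
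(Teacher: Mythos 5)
Your proposal is correct and follows essentially the same route as the paper's proof: partition $I_{k+1}$ into $C_{k+1}$ internal-edge-disjoint sub-intervals, apply Chernoff to the independent good-interval indicators, union-bound the pairwise failure probabilities of bridges between the $(1-\varepsilon_k)^2N_k^2$ candidate pairs, and assemble the component with diameter $2D_k+1$ and the stated $\varepsilon_{k+1}$ recursion. The only (immaterial) difference is that you track the pair-failure probability as a function of the super-distance $m$ before union-bounding, whereas the paper simply uses the worst-case ring distance $N_{k+1}$ for all $C_{k+1}^2$ pairs.
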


\begin{proof}
In words, $A_k$ is the event that there exists a subset of the nodes in $I_k$
that induces a connected component of $G_p$ of size $\geq (1-\varepsilon_k)N_k$
nodes and diameter $\leq D_k$. Consider the interval $I_{k+1}$ and divide it
into $C_{k+1}$ disjoint intervals of size $N_k$ (note that $N_{k+1} = N_k \cdot
C_{k+1}$). Denote by $H_{k+1}$ the set of sub-intervals of $I_{k+1}$ of size
$N_{k}$ for which event $A_k$ holds. In particular, for every $i = 1, 2, \dots,
C_{k+1}$, we use the indicator variable $A_k^{(i)}$ to specify whether or not
$A_k$ holds for the $i$-th sub-interval, so that 
\[
	|H_{k+1}|= \sum_{i=1}^{C_{k+1}}A_{k}^{(i)}.
\]
where independence of the $A_k^{(i)}$'s follows from disjointness of the
sub-intervals of $I_{k+1}$.  For the sake of brevity, let $B_{k+1}$ denote the
event $\{ |H_{k+1}| \geq (1-\delta_k) C_{k+1} - C_{k+1}^{0.8}\}$. Application
of Chernoff's bound then implies
\begin{equation*}
    \Prc{B_{k+1}}\geq 1- e^{-2C_{k+1}^{0.6}}\geq 1-C_{k+1}^{-0.2}.
\end{equation*}

Note that by definition, each sub-intervals in $H_{k+1}$ contains at least one
subset (of the nodes) that induces a connected component in $G_p$ with at least
$(1-\varepsilon_k)N_k$ nodes and diameter at most $D_k$. Our goal is to show
that, with some probability, these intervals are all connected to each other:
this in turn implies the existence of a set of nodes that induces a larger
connected component, with diameter at most $2D_k +1$ and containing at least
$|H_{k+1}|(1-\varepsilon_k)N_k$ nodes, see see Figure~\ref{fig:renormalization}
for a visual intuition of the proof. In the remainder of this proof, we denote
by $F_{k+1}$ the event that all connected components associated to intervals in
$H_{k+1}$ are mutually connected. In particular, we prove that $\Prc{F_{k+1}
\mid B_{k+1}} \geq 1-C_{k+1}^{-0.2}$, so that

\begin{align*}
	&\Prc{A_{k+1}}=\Prc{F_{k+1} \cap B_{k+1}}= 
	\Prc{B_{k+1}}\Prc{F_{k+1} \mid B_{k+1}}\\
	&\geq 1-2C_{k+1}^{-0.2},
\end{align*}
which implies that $A_{k+1}$ holds with probability at least $1-\delta_{k+1}$,
whenever we set 
\[
\delta_{k+1} =2C_{k+1}^{-0.2} \quad \varepsilon_{k+1} = \varepsilon_k 
+ \delta_k + C_{k+1} ^ {-0.2} \quad \text{and} \quad D_{k+1} = 2D_k + 
1.
\]
Now, we estimate the probability that, given $B_{k+1}$, $F_{k+1}$ holds. Two
nodes in $I_{k+1}$ have distance at most $N_{k+1}$ on the cycle. Moreover, if
we consider two sub-intervals of $I_{k+1}$ both belonging to $H_{k+1}$, the
corresponding connected components contain at least $(1-\varepsilon_k) N_k$
nodes each, accounting for at least $(1-\varepsilon_k)^2 N_k^2$ pairs $(u, v)$,
with $u$ belonging to the first and $v$ to second connected component. So, two
given sub-intervals in $H_{k+1}$, they are not connected with probability at
most
\begin{align*}
    &\left(1-\frac{p}{c 
    N_{k+1}^\alpha}\right)^{(1-\varepsilon_k)^2N_k^2} \leq \exp 
    \left(-\frac{p}{c}(1-\varepsilon_k)^2 \frac{e^{2\beta^k}}{e^{\alpha \beta^{k+1}}}\right)\\ 
    &= \exp \left(-\frac{p}{c}(1-\varepsilon_k)^2 e^{\beta^{k}(2-\alpha 
    \beta)}\right),
\end{align*}
where in the remainder of this proof, we write $c$ for $C(\alpha, n)$, for the
sake of readability. If we consider all pairs of intervals in $H_{k+1}$, a
simple union bound allows us to conclude that the probability that the
intervals in  $H_{k+1}$ are not all mutually connected is at most
\begin{align*}
	&C_{k+1}^2\exp\left(-\frac{p}{c}(1-\varepsilon_{k})^2e^{\beta^k(2-\alpha\beta)}\right)\\
	&\leq \exp \left(\beta^k \left(2(\beta-1)-\frac{p}{c}(1-\varepsilon_k)^2 (2(2-\alpha)+0.2) \right)\right),
\end{align*}
where the last inequality follows from the definition of $\beta$.  Finally, the
quantity above can be upper bounded as follows
\begin{align*}
  \exp \left(-\beta^k 0.1(\alpha-1)(2-\alpha)\right)=\exp(-0.2\beta^k(\beta-1))=C_{k+1}^{-0.2},
\end{align*}
whenever $p$ satisfies
\begin{equation*}
	p \geq \frac{c(2-\alpha)(\alpha-1)0.9}{(1-\varepsilon_k)^2(2(2-\alpha)+0.2)}.
\end{equation*}
\end{proof}

Now we are ready to prove the lemma.

\begin{proof}[Proof of Lemma \ref{lem:1alpha2_underthreshold}]
Let $C_k$ and $N_k$ defined as in the claim of Lemma~\ref{lem:key_1alpha2}.
First, we consider the series $\sum_{k}C_k^{-0.2}$ and, since
$C_k=e^{\beta^{k-1}(\beta-1)}$ with $\beta>1$, we notice that the series is
convergent, i.e.  $\sum_{k=1}^{\infty}C_k^{-0.2}<+\infty$.\footnote{We did not
try to optimize constants and the choice $0.2$ for the exponent is not
necessarily optimal.} This means that the tail of the series converges to zero,
and this implies that there exists a constant $h$ such that
\begin{equation}
    \sum_{k=h}^{+\infty} C_k^{-0.2}\leq \frac{1}{100}.
    \label{eq:tail_ck}
\end{equation}
The constant $h$ depends only on $\alpha$. In particular, it increases 
as $\alpha \rightarrow 2^-$ and $\alpha \rightarrow 1^+$.

Now we consider $I_h$, an arbitrary interval of size $N_h = 
e^{\beta^h}$. Next, we let
\begin{equation*}
    \delta_h = 1-p^{e^{\beta^h}}, \quad \varepsilon_h 
             = 0, \quad D_h = e^{\beta^h}, \quad p_h = p,
\end{equation*}
and we consider the event $A_h$, defined as in the statement of
Lemma~\ref{lem:key_1alpha2}.  If no ring edge belonging to $I_h$ is percolated,
$A_h$ is trivially true: this implies that, for every $p\geq p_h$, $\Prc{A_h}
\geq 1-\delta_h,$ where probability is over the edges with endpoints in $I_h$.
If $A_{h+1}$ and $N_{h+1}$ are defined like in its statement,
Lemma~\ref{lem:key_1alpha2} then implies that, for an arbitrary interval
$I_{h+1}$ of size $N_{h+1}$, we have
\[
\Prc{A_{h+1}} \geq 1-\delta_{h+1},
\]
if $p \geq p_{h+1}$ and whenever we take

\begin{align*}
    &\delta_{h+1} =2C_{h+1}^{-0.2}, \quad \varepsilon_{h+1} = 
    1-p^{e^{\beta^h}} + C_{h+1}^{-0.2},\\
    &D_{h+1} = 2e^{\beta^h}+1, \quad p_{h+1} = 
    \frac{0.9c(\alpha-1)(2-\alpha)}{(4.2-2\alpha)},
\end{align*}
where the probability is taken over the randomness of the edges with endpoints
in $I_{h+1}$.\footnote{Recall that $c=C(\alpha, n)$ in the remainder of this
proof.}

If we iteratively apply Lemma \ref{lem:key_1alpha2}, we thus have for each
$k\geq 1 $ such that $N_k \leq n$,
\begin{equation*}
    \Prc{A_k} \geq 1-\delta_k,
\end{equation*}
where $\delta_k =2C_{k}^{-0.2}$, $p_k =
\frac{0.9c(\alpha-1)(2-\alpha)}{(1-\varepsilon_{k-1})^2(4.2-2\alpha)}$, and
$\varepsilon_k$ and $D_k$  are defined by the recurrencies:
\begin{equation*}
    \begin{cases}
    \varepsilon_k =\varepsilon_{k-1}+\delta_{k-1}+C_{k}^{-0.2},\ 
    \text{if $k > h$} \\ 
    \varepsilon_{h} = 1-p^{e^{\beta^h}}
    \end{cases}
\end{equation*}

\begin{equation*}
    \begin{cases}
    D_k = 2D_{k-1}+1,\ \text{if $k > h$} \\
    D_h = e^{\beta^h}.
    \end{cases}
\end{equation*}
Now, we solve the recurrence for $\varepsilon_k$, obtaining $ \varepsilon_k =
1-p^{e^{\beta^h}} + 3\sum_{i=h}^k C_i ^{-0.2},$ where, leveraging
\eqref{eq:tail_ck}, we take $p>\sigma$, with $\sigma 
> 0$ such that $1-\sigma^{e^{\beta^h}}=1/50$. With this choice, for 
each $k \geq 1$ we obtain
\begin{equation*}
    \varepsilon_k \leq 1-p^{e^{    \beta^h}}+\frac{3}{100} \leq \frac{1}{20}.
\end{equation*}
Moreover, for each $i \leq k$, we have that $D_k \leq  2^{k - h + 1}
e^{\beta^h}$. If we take $m=\log_\beta(\log n)$, then $N_m=e^{\beta^m}=n$ and
$C_m = n^{(\beta-1)/\beta}$. We also have:
\begin{align*}
	&D_m \leq \frac{e^{\beta^h}}{2^{h-1}}2^{\log_\beta(\log(n))}
	=\frac{e^{\beta^h}}{2^{h-1}}(\log n)^{\log_\beta 2}.
\end{align*}
Setting $\eta=\log_\beta 2$, we have $\eta > 1$, since $\beta<2$. We thus have
\[
D_m = O((\log^\eta(n))).
\] 
Moreover, if $p \geq \sigma$
\begin{equation*}
    \delta_m \leq 2n^{-0.2(\beta-1)/\beta},\ \varepsilon_m \leq 
    \frac{1}{20}, \ p_m \geq \frac{0.9c(\alpha-1)(2-\alpha)}{(19/20)^2(4.2-2\alpha)}.
\end{equation*}
Finally, if
\[p \geq \max\left\{\sigma, \frac{0.9c(\alpha-1)(2-\alpha)}{(19/20)^2(4.2-2\alpha)}\right\}:=\overline{p}\]
then,
\[\Prc{A} \geq 1-\delta_m \geq 1-2n^{-0.2(\beta-1)/\beta},\]
where
\[
	A = \left\{\exists S\subseteq V: |S| > \frac{19}{20}n\,\wedge\, 
	\diam_{p}(S) = O(\log^\eta (n))\right\},
\]
i.e., w.h.p. $G_p$ contains an induced subgraph of size at least $(19/20)n$ 
nodes and diameter $O(\log^\eta (n))$.
\end{proof}

\section{The case $\alpha<1$}
\label{sec:alpha<1}
In this section, we prove Theorem~\ref{thm:main_1alpha}, which follows from the
two lemmas below.

\begin{lemma}\label{le:1alpha_low}
Under the hypotheses of Theorem~\ref{thm:main_1alpha}, sample a $G=(V,E)$ from
the \SWGpl distribution and let $G_p$ be its  percolation subgraph. Then, a
constant $\overline{p}<1$ exists such that, if $p > \overline{p}$ then, w.h.p.,
there is a set of $\Omega(n)$ nodes in $G_p$ that induce a connected subgraph
of diameter $\bigO(\log n)$.
\end{lemma}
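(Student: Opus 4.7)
The plan is to reduce the case $\alpha<1$ to the Erdős–Rényi-augmented-cycle model analyzed in~\cite{becchettiCDPTZ21}, exploiting the fact that when $\alpha<1$ every non-adjacent pair of vertices is connected by a bridge with probability at least $c_\alpha/n$, for a positive constant $c_\alpha$ depending only on $\alpha$. This reduces the problem to one for which a sharp percolation threshold and a giant component of logarithmic diameter are already known.

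First I would estimate the normalizing constant. For $\alpha<1$, a standard integral comparison gives
\[
C(\alpha,n) \;=\; 2\sum_{x=2}^{n/2} x^{-\alpha} \;=\; \Theta\!\left(n^{1-\alpha}\right),
\]
where the hidden constants depend only on $\alpha$. Since $d(u,v)\le n/2$ for every non-adjacent pair $(u,v)$ on the cycle, this yields
\[
\Prc{(u,v)\in E_2} \;=\; \frac{1}{d(u,v)^\alpha\, C(\alpha,n)} \;\ge\; \frac{1}{(n/2)^\alpha\, C(\alpha,n)} \;\ge\; \frac{c_\alpha}{n}.
\]
In particular, each non-adjacent pair appears in $G_p$ with probability at least $p\,c_\alpha/n$, mutually independently, while each ring-edge appears with probability exactly $p$.

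Next I would set up a monotone coupling between $G_p$ and the full-bond percolation graph $H_p$ of the auxiliary random graph $H=(V,E_1\cup E_H)$, where $E_H\sim\mathcal{G}_{n,c_\alpha/n}$ is supported on non-adjacent pairs. Using an independent $[0,1]$-uniform coin for each ring-edge and each non-adjacent pair to jointly decide its presence in $H_p$ (with threshold $p\,c_\alpha/n$ on non-adjacent pairs, $p$ on ring-edges) and in $G_p$ (with the corresponding, larger thresholds), the inequalities above realize $H_p\subseteq G_p$ on a common probability space.

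Finally I would invoke the main percolation result of~\cite{becchettiCDPTZ21} for the ring-plus-$\mathcal{G}_{n,q}$ model with $q=c_\alpha/n$: for $p$ above a constant threshold $\overline{p}<1$, w.h.p.\ $H_p$ contains a set $S\subseteq V$ of size $\Omega(n)$ that induces a connected subgraph of diameter $O(\log n)$. Since $H_p\subseteq G_p$ under the coupling, the same $S$ induces a connected subgraph of $G_p$ whose diameter is at most that in $H_p$, which proves the lemma. The heavy analytical work is thus entirely done in~\cite{becchettiCDPTZ21}; the only point to check is that their threshold $\overline{p}$ is strictly less than $1$ for the specific choice $nq=c_\alpha$, a constant depending only on $\alpha$, which is transparent from the form of their threshold and should present no obstacle.
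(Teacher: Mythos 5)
Your reduction is the same one the paper uses: bound $C(\alpha,n)=\Theta(n^{1-\alpha})$, conclude that every bridge survives percolation with probability at least $pc_\alpha/n$, and stochastically dominate the percolated ring-plus-$\mathcal{G}_{n,q}$ model of~\cite{becchettiCDPTZ21} (the paper's Fact~\ref{fact:alpha<1_erdos} is exactly your first step, and the monotone coupling is fine, since the target property is monotone increasing in the edge set).

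The gap is in your last step. You invoke ``the main percolation result of~\cite{becchettiCDPTZ21}'' as if it already delivered, w.h.p., a set of $\Omega(n)$ nodes inducing a connected subgraph of diameter $\bigO(\log n)$. What that paper actually provides — and what the present paper imports as Lemmas~\ref{lem:old_erdos_phase1} and~\ref{lem:old_erdos_phase2} — is weaker: (i) a \emph{constant-probability} guarantee that a sequential BFS from a \emph{single} source reaches either $\Omega(\log n)$ frontier nodes or $\Omega(n)$ nodes in $\bigO(\log n)$ rounds, and (ii) a w.h.p.\ guarantee only once one already has $\Omega(\log n)$ initiators. A single-source statement cannot hold w.h.p.\ (the source is isolated with probability at least $(1-p)^3$), so some boosting is unavoidable. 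The paper supplies it with a restart argument: run the sequential BFS from $\Theta(\log n)$ fresh sources, excluding previously explored vertices (whose number is controlled via the $\bigO(\log n)$ max-degree bound of Fact~\ref{fact:degreeofSWGpl}, so that the excluded set stays polylogarithmic and the constant-probability lemma still applies); w.h.p.\ one attempt succeeds, and if it only produced $\Omega(\log n)$ frontier nodes rather than $\Omega(n)$ reached nodes, the parallel-BFS lemma finishes the job. This restart-and-two-phase argument is the actual content of the paper's proof of Lemma~\ref{le:1alpha_low}, and your proposal omits it. The rest of your write-up (the estimate of $C(\alpha,n)$, the coupling, the monotonicity of the conclusion, and the remark that the threshold must be checked to be a constant below $1$ for $nq=c_\alpha$) is correct and matches the paper.
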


\begin{lemma}\label{le:1alpha_up}
Under the hypothesis of Theorem~\ref{thm:main_1alpha}, sample a $G=(V,E)$ from
the \SWGpl distribution and let $G_p$ be the percolation subgraph of $G$. Then,
a constant $\underline{p}>0$ exists such that, if $p<\underline{p}$ then,
w.h.p., each connected component of $G_p$ has size at most $\bigO(\log n)$.
\end{lemma}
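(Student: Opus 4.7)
The plan is to couple the BFS exploration of the connected component $\Gamma_p(s)$ of an arbitrary vertex $s\in V$ with a subcritical Galton--Watson branching process, in exactly the spirit of the proof of Lemma~\ref{lem:alfa>2_terminates}, and then apply a union bound over the $n$ choices of $s$. The argument only uses the fact that, by the choice of the normalising constant $C(\alpha,n)$ in Definition~\ref{def:small-world}, the expected degree of any vertex of $G_p$ equals $3p$, so the proof is actually insensitive to the value of $\alpha$ and in fact yields the explicit threshold $\underline{p}=1/3$ promised in the remark at the end of Section~\ref{se:contribution}.

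Concretely, I fix $s\in V$ and perform a BFS of $G_p$ from $s$, letting $W_t$ be the number of vertices enqueued in iteration $t$ and $|Q_t|=|Q_{t-1}|-1+W_t$ the queue size at the end of the iteration. Then $|\Gamma_p(s)|\le \sigma:=\min\{T:|Q_T|=0\}$, and $\{\sigma>T\}\subseteq\{\sum_{t=1}^{T}W_t\ge T\}$. Using the principle of deferred decisions, the edges incident to $w_t$ are revealed only when $w_t$ is dequeued; letting $\mathcal{F}_{t-1}$ be the filtration thus generated, $W_t$ is conditionally stochastically dominated by $\deg_{G_p}(w_t)$, which is a sum of at most $n-1$ independent Bernoullis whose probabilities sum to at most $3p$ (two cycle edges of probability $p$ each, plus one potential bridge per remaining vertex $u$ of probability $p/(d(w_t,u)^\alpha C(\alpha,n))$, whose sum equals $p$ by Definition~\ref{def:small-world}).

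Using $1+x\le e^x$, this gives the conditional bound $\Expcc{e^{\lambda W_t}\mid \mathcal{F}_{t-1}}\le \exp(3p(e^\lambda-1))$ for every $\lambda\ge 0$. Iterating and applying Markov's inequality,
\begin{equation*}
\Prc{\textstyle\sum_{t=1}^{T}W_t\ge T}\;\le\;\exp\bigl(T\bigl[3p(e^\lambda-1)-\lambda\bigr]\bigr).
\end{equation*}
The bracket has derivative $3p-1<0$ at $\lambda=0$ whenever $p<1/3$, so a sufficiently small positive $\lambda$ makes it a strictly negative constant $-c(p)$; choosing $T=(4/c(p))\log n=\bigO(\log n)$ then yields $\Prc{\sigma>T}\le n^{-3}$, and a union bound over the $n$ choices of $s$ shows that every connected component of $G_p$ has size $\bigO(\log n)$ with probability at least $1-n^{-2}$.

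The only delicate point, and the step I expect to require the most care in a full write-up, is the conditional stochastic domination in Step~2: one must verify that after conditioning on $\mathcal{F}_{t-1}$ the unrevealed potential edges at $w_t$ are still independent Bernoullis with their original probabilities, so that their conditional sum of means is still $\le 3p$ no matter how the search has evolved and no matter which previously-revealed non-edges are present. This is standard but worth stating explicitly, and it is handled verbatim as in the corresponding step of the proof of Lemma~\ref{lem:termination_alg_alfa>2}.
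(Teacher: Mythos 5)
Your proposal is correct and follows essentially the same route as the paper: the paper proves this lemma by pointing to the proof of Lemma~\ref{lem:1alpha2_belowthreshold}, which likewise couples the BFS exploration with a branching process of mean offspring $3p<1$ (using $\underline{p}=1/3$), applies a Chernoff-type bound to $\sum_{t\le T}W_t$ for $T=\bigO(\log n)$, and union-bounds over the $n$ starting vertices. Your conditional moment-generating-function iteration is just a slightly more explicit way of handling the stochastic domination that the paper asserts informally, so no substantive difference remains.
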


A key observation to prove Lemma~\ref{le:1alpha_low} is that, when $\alpha<1$,
the percolation graph of a graph sampled from \SWGpl stochastically dominates a
cycle with additional \Erdos\ random edges. To prove Lemma~\ref{le:1alpha_low}
we thus use a previous result in~\cite{becchettiCDPTZ21} for this class of
random graphs. In particular, we first give an equivalent formulation of
Lemma~5.1 in~\cite{becchettiCDPTZ21}, stating that, with constant probability
the sequential BFS visit (Algorithm~\ref{alg:sequential-BFS-visit}) reaches
$\Omega(\log n)$ nodes within $\bigO(\log n)$ rounds. We then consider a
\textit{parallel} BFS visit (see Algorithm~\ref{alg:parallel-BFS-visit} in
Subsection~\ref{ssec:1alpha_low} of the Appendix) and show an equivalent
formulation of Lemma~5.2 in~\cite{becchettiCDPTZ21}, stating that, w.h.p., the
parallel BFS visit starting with $\Omega(\log n)$ nodes reaches a constant
fraction of nodes within $\bigO(\log n)$ rounds.

The full proof of Lemma~\ref{le:1alpha_low} is given in
Subsection~\ref{ssec:1alpha_low} of the Appendix, while the proof of
Lemma~\ref{le:1alpha_up} is omitted, since it proceeds along the very same
lines as the proof of Lemma~\ref{lem:1alpha2_belowthreshold}, which does not
depends on the value of $\alpha$.

\bibliographystyle{plain}
\bibliography{njl}

\newpage
\appendix
\begin{center}
	\LARGE{\textbf{Appendix}}
\end{center}

\section{Full-bond Percolation vs Independent Cascade} \label{sec:equi-bpic}

We first give the formal definitions of full-bond percolation and independent
cascade processes.

\begin{definition}[Full-bond percolation]
\label{def:live-arc-model}
Given a graph $G = (V,E)$, and given, for every edge $e \in E$, a
\emph{percolation probability} $p(e) \in [0,1]$, the \emph{bond percolation}
process consists to \emph{remove} each edge $e \in E$, independently, with
probability $1-p(e)$.  The random subgraph, called the \emph{percolation graph}
$G_p=(V,E_p)$, is defined by the edges that are not removed (i.e. they are
activated), i.e., $E_p = \{ e \in E \, : \, \mbox{edge $e$ is not removed} \}$.
Given an initial subset $A_0 \subseteq V$ of \emph{active} nodes, for every
integer $t \leq n-1$, we define the random subset $A_{t}$ of  $t$-active nodes
as the subset of nodes that are at distance $t$ from $A_{0}$ in the percolation
graph $G_p=(V,E_p)$, i.e., $A_{t} = N_{G_{p}}^{t}(A_{0})$. Finally, the subset
of all active nodes from $A_{0}$ is the subset $N^*_{G_p}(A_0)$.
\end{definition}

\begin{definition}[Independent cascade] \label{def:independent_cascade}
Given a graph $G = (V,E)$, an assignment of \emph{transmission probabilities}
$\{ p(e) \}_{e\in E}$ to the edges of $G$, and a non-empty set $I_0 \subseteq
V$ of initially infectious nodes, the {\em Independent Cascade} (for short,
\IC) protocol defines the stochastic process $\{S_t,I_t,R_t\}_{t\geq 0}$ on
$G$, where $S_t,I_t,R_t$ are three sets of vertices, respectively called
\emph{susceptible}, \emph{infectious}, and \emph{recovered}, which form a
partition of $V$ and that are defined as follows.

\begin{itemize}
     \item At time $t=0$ we have $R_0 = \emptyset$ and $S_0 = V-I_0$.
     \item At time $t\geq 1$:
     \begin{itemize}
         \item $R_t = R_{t-1} \cup I_{t-1}$, that is, the nodes that were
         infected at the previous step become recovered;
         
         \item independently for each edge $e = \{ u,v\}$ such that $u \in
         I_{t-1}$ and $v\in S_{t-1}$, with probability $p(e)$ the event that
         ``$u$ transmits the infection (i.e. a copy of the source message) to
         $v$ at time $t$'' takes place. The set $I_t$ is the set of all
         vertices $v\in S_{t-1}$ such that for at least one neighbor $u \in
         I_{t-1}$ the event that $u$ transmits the infection to $v$ takes place
         as described above.
         
         \item $S_t = S_{t-1} - I_t$
     \end{itemize}
 \end{itemize}
The process stabilizes when $I_t = \emptyset$.\\ The Reed-Frost \SIR protocol
(for short, \emph{RF Protocol}) is the special case of the \IC protocol in
which all transmission probabilities are the same. 
\end{definition}

In~\cite{KKT15}, given any fixed graph $G = (V,E)$, the Independent Cascade
process is shown to be \emph{equivalent} to the full-bond percolation process. 

If we consider the set $I_t$ of nodes that are infectious at time $t$ in a
graph $G=(V,E)$ according to the IC process with transmission probabilities $\{
p(e) \}_{e\in E} $ and with initiator set $I_0$, we see that such a set has
precisely the same distribution as the set of nodes at distance $t$ from $I_0$
in the percolation graph $G_p$ generated by the bond-percolation process with
probabilities $\{ p(e) \}_{e\in E}$ (see Definition \ref{def:live-arc-model}).
Furthermore, the set of recovered nodes $R_t$ is distributed precisely like the
set of nodes at distance $< t$ from $I_0$ in $G_p$.

We formalize this equivalence by quoting a theorem from~\cite{CWLC13}.

\begin{theorem}[Bond percolation and IC processes are equivalent, \cite{KKT15}]
\label{thm:equivalence}
Consider the bond-percolation process and the \IC protocol on the same graph
$G=(V,E)$ and let $I_0 = A_0 = V_0$, where $V_0$ is any fixed subset of $V$,
and with transmission probabilities and percolation probabilities equal to $\{
p(e) \}_{e\in E}$. 
 
Then, for every integer $t \geq 1$ and for every subsets $V_1, \dots, V_{t-1}
\subseteq V$, the events $\{I_0 = V_0, \dots, I_{t-1} = V_{t-1}\}$ and $\{A_0 =
V_0, \dots, A_{t-1} = V_{t-1}\}$ have either both zero probability   or
non-zero probability, and, in the latter case, the  distribution of the
infectious set $I_{t}$,  conditional to the event $\{I_0 = V_0, \dots, I_{t-1}
= V_{t-1}\}$, is   the same  to that of  the $t$-active set $A_t$, conditional
to the event  $\{A_0 = V_0, \dots, A_{t-1} = V_{t-1}\}$. 
\end{theorem}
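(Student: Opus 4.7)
The plan is to establish the equivalence via a natural coupling between the two processes that uses a single shared source of randomness. Concretely, for each edge $e \in E$, introduce an independent Bernoulli$(p(e))$ random variable $X_e$, and let $\{X_e\}_{e\in E}$ generate both processes simultaneously: in bond percolation, $e \in E_p$ iff $X_e = 1$; in IC, whenever the rule in Definition~\ref{def:independent_cascade} calls for a coin flip on edge $e$, we read off the value $X_e$ (rather than drawing a fresh coin). The goal is to show that under this coupling the sequences $(I_0,I_1,\ldots)$ and $(A_0,A_1,\ldots)$ are almost-surely identical, so that all joint and conditional distributions coincide.

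The first step is to verify that the coupling is well-defined, namely that IC queries each edge at most once. A coin flip for $e = \{u,v\}$ in IC occurs only at time steps $t$ for which exactly one endpoint lies in $I_{t-1}$ and the other in $S_{t-1}$. Since a node remains in $I$ for exactly one step and then moves permanently to $R$, never returning to $S$, this situation can occur at most once per edge across the entire evolution of the process. Therefore, using the single pre-sampled $X_e$ each time IC consults edge $e$ yields exactly the Bernoulli$(p(e))$ draw prescribed by the IC rule, and independence across edges is preserved by construction.

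The second step is an induction on $t$ to show that, under the coupling, $I_t = A_t$ for all $t \ge 0$. The base case follows from the hypothesis $I_0 = A_0 = V_0$. For the inductive step, assume $I_s = A_s$ for all $s < t$; equivalently, $R_t \cup I_{t-1}$ equals the set of nodes at distance at most $t-1$ from $V_0$ in $G_p$. A node $v$ lies in $I_t$ iff $v \in S_{t-1}$ and there exists $u \in I_{t-1}$ with $X_{\{u,v\}} = 1$. By the inductive hypothesis, this is equivalent to: $v$ is not reachable from $V_0$ in at most $t-1$ hops in $G_p$, and $v$ is adjacent in $G_p$ to some node at distance exactly $t-1$ from $V_0$. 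That is precisely the condition $d_{G_p}(V_0,v) = t$, i.e.\ $v \in A_t$.

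Finally, the conclusion is immediate: since $(I_0,\ldots,I_t) = (A_0,\ldots,A_t)$ holds almost surely under the coupling, the joint laws of these two vectors are identical, which in particular implies that for every choice of $V_1,\ldots,V_{t-1}$ the events $\{I_0 = V_0,\ldots,I_{t-1} = V_{t-1}\}$ and $\{A_0 = V_0,\ldots,A_{t-1} = V_{t-1}\}$ coincide (hence they have the same probability, zero or otherwise), and the conditional law of $I_t$ given the former matches the conditional law of $A_t$ given the latter. The only genuinely delicate point in the argument is the one-shot observation about IC edge queries; everything else is bookkeeping on top of the coupling, and the proof essentially reduces to confirming that the BFS-layer description of percolation from $V_0$ matches the time-layered epidemic dynamics step by step.
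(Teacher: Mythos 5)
Your proof is correct and follows exactly the route the paper intends: the paper cites this result from~\cite{KKT15} and only remarks that it follows "by applying the principle of deferred decision on the percolation/infection events," and your coupling via pre-sampled Bernoulli variables $X_e$, the one-query-per-edge observation, and the layer-by-layer induction $I_t=A_t$ is precisely that deferred-decisions argument written out in full. No gaps.
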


The strong equivalence shown in the previous theorem is obtained by applying
the principle of deferred decision on the percolation/infection events that
take place on every edge since they are mutually independent. This result can
be exploited to analyze different aspects and issues of the IC (and, thus, the
RF) process. We here summarize such aspects in an informal way.

As a first immediate consequence of Theorem~\ref{thm:equivalence}, we have
that, starting from any source subset $I_0$, to bound the size of the final set
$R_{\tau}$ of the nodes informed by $I_0$, we can look at the size of the union
of the connected components in $G_p$ that include all nodes of $I_0$, i.e., we
can bound the size of $N^*_{G_p}(I_0)$.

A further remark is that in the bond-percolation process there is no {\em
time}, and we can analyze the connected component of the percolation graph in
any order and according to any visit process. Furthermore, if we want a lower
bound to the number of nodes reachable from $I_0$ in the percolation graph, we
can choose to focus only on vertices reachable through a subset of all possible
paths, and, in particular, we can restrict ourselves to paths that are easier
to analyze.  In our analysis we will only consider paths that alternate between
using a bounded number of local edges and one bridge edge.

\section{Omitted proofs}

\subsection{Proof of Lemma \ref{lem:alfa>2_degreeandsuperbridges}}
\label{proof of lem:alfa>2_degreeandsuperbridges}

We first prove the following, preliminary fact.

\begin{lemma}
Let $n \geq 3$ and $\alpha>2$ and let $G=(V,E_1 \cup E_2)$ be a \SWGpl graph.
Then, for every $v \in V$ and $x \geq 0$, we have:
\begin{equation*}
    \Prc{\text{$v$ has a bridge of length $> x$}}\leq 
    \frac{1}{x^{\alpha-1}}\, ,
\end{equation*}
\begin{equation*}
    \Prc{\text{$v$ has a bridge with length $> x$ on one side}}\leq \frac{1}{2x^{\alpha-1}}\,.
\end{equation*}
\label{lem:length_bridges}
\end{lemma}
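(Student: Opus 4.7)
The plan is to apply a union bound over the possible endpoints of a long bridge incident to $v$, and then to estimate the resulting tail series by an integral comparison. By Definition~\ref{def:small-world}, for each vertex $u$ with $d(u,v)=L$, the bridge $(v,u)$ is present independently with probability $1/(L^\alpha C(\alpha,n))$, and on either side of $v$ on the cycle there is exactly one vertex at each ring-distance $L \in \{1,2,\ldots,\lfloor n/2 \rfloor\}$. Hence, for the one-sided statement, a union bound over the clockwise endpoints at ring-distance greater than $x$ gives
\[
	\Prc{\text{$v$ has a bridge of length $>x$ on one side}} \;\le\; \sum_{L=\lceil x\rceil+1}^{\lfloor n/2\rfloor}\frac{1}{L^\alpha C(\alpha,n)}.
\]

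Next I would bound this tail by the standard integral estimate $\sum_{L>x}L^{-\alpha}\le\int_x^\infty t^{-\alpha}\,dt=\frac{1}{(\alpha-1)x^{\alpha-1}}$, and combine it with a lower bound on $C(\alpha,n)$. The footnote following Definition~\ref{def:small-world} records exactly what is needed: for $\alpha>1$ (hence certainly for $\alpha>2$), $C(\alpha,n)$ lies between two strictly positive constants depending only on $\alpha$; concretely, $C(\alpha,n)\ge 2/2^\alpha$ by keeping only the first term of its defining series. Plugging these two ingredients together yields the one-sided bound up to an $\alpha$-dependent constant. The two-sided statement is then obtained by a further union bound over the clockwise and counter-clockwise halves of the ring, doubling the one-sided estimate.

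The main (though very minor) obstacle is matching the displayed constants $\frac{1}{x^{\alpha-1}}$ and $\frac{1}{2x^{\alpha-1}}$ exactly: the naive combination of the integral estimate with the crude lower bound on $C(\alpha,n)$ produces a prefactor that still depends on $\alpha$. Removing that prefactor requires a sharper bookkeeping step, e.g.\ exploiting additional terms of $C(\alpha,n)$ or refining the integral comparison (using $\sum_{L\ge\lceil x\rceil+1}L^{-\alpha}\le \int_{\lceil x\rceil}^\infty t^{-\alpha}\,dt$ and then tightening for integer $x\ge 2$). Since this is routine constant-chasing enabled by $\alpha>2$ keeping all relevant quantities bounded, no new idea is required.
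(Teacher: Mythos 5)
Your proposal follows the paper's proof essentially verbatim: a union bound over the one-sided endpoints giving $\sum_{y>x}\frac{1}{C(\alpha,n)\,y^{\alpha}}$, an integral comparison yielding $\frac{1}{(\alpha-1)\,C(\alpha,n)\,x^{\alpha-1}}$, the crude lower bound $C(\alpha,n)\ge 2^{1-\alpha}$ obtained from the first term of the defining sum, and a factor-of-two union bound to pass between the one-sided and two-sided statements. The constant-matching obstacle you honestly flag is real but is not actually overcome in the paper either --- its last inequality $\frac{1}{(\alpha-1)C(\alpha,n)x^{\alpha-1}}\le\frac{1}{x^{\alpha-1}}$ is justified only by $C(\alpha,n)\ge 1/2^{\alpha-1}$, which does not give $(\alpha-1)C(\alpha,n)\ge 1$ for all $\alpha>2$ --- so you should regard the stated prefactors as an $O_{\alpha}(x^{-(\alpha-1)})$ bound rather than something to chase exactly; this is all that the downstream applications use.
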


\begin{proof}
We have
\begin{align*}
    &\Prc{\text{$v$ has a bridge with length $>x$ in one side}}\leq 
    \sum_{y=x+1}^{n/2}\frac{1}{C(\alpha,n)y^{\alpha}}\\
    &\leq \int_{x+1}^{+\infty}\frac{1}{C(\alpha,n)(y-1)^{\alpha}}dy
    \leq \frac{1}{(\alpha-1)C(\alpha,n)x^{\alpha-1}}\leq \frac{1}{x^{\alpha-1}},
\end{align*}
where the last inequality follows from the fact that $C(\alpha,n)\geq 1/2^{\alpha-1}$. 
Then, the  lemma follows from 
\begin{align*}
	&\Prc{\text{$v$ has a bridge of length $>x$ }}\\
	&\leq 2\Prc{\text{$v$ has a bridge with length $>x$ on one side}}. 
\end{align*}
\end{proof}

Consider now any super-node $\mbox{\sc v} \in\Mgra{V}$ and, for the rest of
this proof, denote by $w_1$ and $w_2$ the two boundary nodes of $I_{\mbox{\sc
v}}$.  Without loss of generality, we assume $I_{\mbox{\sc v}}$ includes the
ring-edges that we traverse if we move on the cycle $(V, E_1)$ from $w_1$ to
$w_2$  counter-clockwise.  We first prove \eqref{eq:lem:no_bridge_GM}.  The
super-node $\mbox{\sc v}$ has no out-edges in $\Mgra{G_p}$ if and only if i)
each node in $I_{\mbox{\sc v}}$ has no bridge to a node in $V \setminus
I_{\mbox{\sc v}}$ and ii) $w_1$ and $w_2$ share no-ring edges with nodes in $V
\setminus I_{\mbox{\sc v}}$ in $G_p$. Condition i) above is equivalent to the
following: for every $x = 1,\ldots , \ell$, the node $u\in I_v$ at distance
$d(w_1, u) = x$ from $w_1$ has no bridge of length exceeding $x$ on the
clockwise side and of length exceeding $\ell - x$ on the counter-clockwise
side.

Since, from Lemma~\ref{lem:length_bridges}, the probability that a node has a
bridge of length larger than $x$ on one side is at most $1/(2x^{\alpha-1})$, 

\begin{align*}
    &\Prc{\deg_{\Mgra{G_p}}({\mbox{\sc v}}) = 0}\geq \left[(1-p)\cdot \prod_{x=1}^{\ell}\left(1-\frac{1}{2x^{\alpha-1}}\right)\right]^2 \\ 
    &\geq (1-p)^2\cdot e^{-2\sum_{x=1}^{\ell} \frac{1}{x^{\alpha-1}}}\geq (1-p)^2 e^{-2/(\alpha-2)}\,.
\end{align*}
We next prove \eqref{eq:lem:no_bridge_GM_longerthan1}. 
Let $v_i$ a node in $V \setminus I_{\mbox{\sc v}}$ at ring-distance $i+\ell$ from 
$I_{\mbox{\sc v}}$, i.e., such that $\min\{d(w_1, v_i), d(w_2, v_i)\} = i + \ell$. 
We have:
\begin{align*}
    &\Prc{\text{$v_i$ is not a neighbor of any node in $I_{\mbox{\sc v}}$ in 
    $G_p$}}\\ 
    &\geq \prod_{x=1}^{\ell}\left(1-\frac{1}{C(\alpha,n)(x+\ell+i)^\alpha}\right) 
    \geq e^{-\sum_{x=1}^{\ell}\frac{1}{(x+\ell+i)^{\alpha}}}
    \geq e^{-\frac{1}{(i+\ell)^{\alpha-1}}}\,.
\end{align*}
Then, let $\Mgra{(E_p)_2}$ denote the set of super-bridges in $\Mgra{G_p}$, we
use the above inequality to bound the expected number of super-bridges that are
incident in $\mbox{\sc v}$:
\begin{align*}
    &\Expcc{|\N_{\Mgra{G_p}}(\mbox{\sc v})\cap\Mgra{(E_p)_2}|} =2\sum_{i=1}^{n/2 - 
    \ell}\Prc{\text{$v_i$ has a neighbor in $I_{\mbox{\sc v}}$} }\\
    & \leq 2\sum_{i=1}^{+\infty} 
    1-e^{-\frac{1}{(i+\ell)^{\alpha-1}}}\leq 2\sum_{i=1}^{+\infty} 
    \frac{2}{(i+\ell)^{\alpha-1}}\leq\frac{2}{(\alpha-2)\ell^{\alpha-2}}.
\end{align*}
Finally, the proof follows from
\begin{align*}
    &\Prc{\text{\mbox{\sc v} has a super-bridge in $\Mgra{G_p}$}}\leq \Expcc{|\N_{\Mgra{G_p}}(\mbox{\sc v})\cap\Mgra{(E_p)_2}|}.
\end{align*}

\subsection{Proof of Lemma~\ref{lem:1alpha2_belowthreshold}}
\label{ssec:lem:1alpha2_belowthreshold}
Let $\underline{p}=1/3$ and consider an arbitrary node $s \in V$. We consider
an execution of the BFS in Algorithm \ref{alg:sequential-BFS-visit} with input
the percolation subgraph $G_p=(V,E_p)$ of $G=(V,E)$ and the source $s$. 

\begin{algorithm}[]
\caption{BFS visit of $G_p$}
\small{
\begin{algorithmic}[1]
\State\textbf{Input}: the subgraph  $G_p=(V,E_p)$, a  source $s \in V$
\State $Q = \{s\}$
\State $R = \emptyset$
\While{$Q \neq \emptyset $}\label{line:whileBFS}
    \State $w = \dequeue(Q)$
    \State $R= R \cup \{w\}$
			\For {each neighbor $x$ of $w$ in $G_p$ such that $x \not \in R $} 
					   \State $\enqueue(x,Q)$\label{line:addqueue}
				  \EndFor
 \EndWhile
\end{algorithmic}}
\label{alg:sequential-BFS-visit}
\end{algorithm}
We consider the generic $t$-th iteration of the while loop at line
\ref{line:whileBFS} and we denote by $W_t$ the number of nodes added to the
queue $Q$ at line \ref{line:addqueue}. Moreover,  $B_t$ is the set of nodes
that are in $R$ in the $t$-th iteration. By its definition, $B_t$ is a
branching process described by the following recursion: 
\begin{align}
    \begin{cases}
    B_t = B_{t-1} + W_t -1 &\quad \text{if $B_{t-1} \neq 0$}
    \\ B_t = 0  &\quad \text{if $B_{t-1}=0$}
    \\
    B_0 = 1.
    \end{cases}
\end{align}
Note that, from Definition~\ref{def:small-world}, each node $w \in V$ has
expected degree $\Expcc{\deg(w)}=3$. So, since each edge in $G$ is also in
$G_p$ with probability $p$, we have $\Expcc{W_1} = 3p$ and $\Expcc{W_t}\le 3p$
for $t > 1$.\footnote{We have not strict equality for $t > 1$, which follows
since the $W_t$'s are not independent in general, since
line~\ref{line:addqueue} is only executed if $x\not\in R$.} Since
$p<\underline{p}$, there is a constant $\delta$ such that $p=(1-\delta)/3$ and,
for each $t \geq 0$ 
\[
\Expcc{W_t} = 1-\delta.
\]
We consider the $T$-th iteration of the while loop, where $T=\gamma \log n$.
Note that the random variables $W_1,\ldots , W_T$ are not independent as noted
earlier but, as remarked in the proof of
Lemma~\ref{lem:termination_alg_alfa>2}, it is easy to show that they are
stochastically dominated by $T$ independent random variables distributed as
$W_1$. For the sake of simplicity, we abuse notation, by using $W_1,\dots, W_T$
to denote the  $T$ independent copies of $W_1$ in the remainder of this proof.
Now, if
\begin{equation*}
    \sum_{i=1}^{T}W_i -T<0,
\end{equation*}
then $B_T = 0$. We notice that each $W_i$ can be written as a sum of $n+2$
independent Bernoulli random variables.\footnote{Assume node $v$ is visited in
the $t$-iteration. Then we have $2$ indicator variables for the $2$ ring edges
incident in $v$, plus $n$ indicator variables, corresponding to $n$ bridges
potentially incident in $v$.} Hence, applying Chernoff's bound to
$W=\sum_{i=1}^T W_i$ we obtain:
\begin{equation*}
    \Prc{W >(1+\delta)\Expcc{W}}\leq e^{-\frac{\delta^2}{2}\Expcc{W}}.
\end{equation*}
Next, since $\Expcc{W}<(1-\delta)T$,
\begin{equation*}
    \Prc{W>(1-\delta^2)T}\leq e^{-\frac{\delta^2(1-\delta)}{2}T} \leq 
    \frac{1}{n^2},
\end{equation*}
where the last inequality follows if we take $\gamma \geq
4/(\delta^2(1-\delta))$. This allows us to conclude that 
\[
\Prc{B_T=0}\geq 1-\frac{1}{n^2},
\]
for some $T=\bigO(\log n)$, which in turn implies that with the above
probability, the connected component of which $v$ is part contains at most
$\bigO(\log n)$ nodes. Finally, a union bound over all nodes in $V$ concludes
the proof.

\subsection{Proof of Lemma~\ref{le:1alpha_low}}\label{ssec:1alpha_low}
We first prove the following fact.

\begin{fact}
\label{fact:alpha<1_erdos}
Under the hypotheses of Lemma~\ref{le:1alpha_low}, a constant $c \in (0,1)$
(depending only on $\alpha$) exists such that, for any  $u,v \in V$,   
\begin{equation*}
    \Prc{(u,v) \in E_p}\geq \frac{pc}{n}.
\end{equation*}
\end{fact}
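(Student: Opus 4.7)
The plan is to split into the two cases distinguished by Definition~\ref{def:small-world}: either $(u,v)$ is a ring-edge ($d(u,v)=1$) or it is a potential bridge ($d(u,v)\ge 2$). In the ring-edge case the statement is essentially trivial: the edge is deterministically present in $G$, hence $\Prc{(u,v)\in E_p}=p \ge pc/n$ for any constant $c \in (0,1)$ once $n$ is at least, say, $2$. All the work is therefore concentrated on the bridge case, where
\[
\Prc{(u,v)\in E_p} \;=\; \frac{p}{d(u,v)^{\alpha}\,C(\alpha,n)},
\]
and I need to show the denominator is $O(n)$ uniformly in $u,v$.

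The key observation is that, since $\alpha<1$, the normalizing factor $C(\alpha,n)=2\sum_{x=2}^{n/2}x^{-\alpha}$ grows like $n^{1-\alpha}$, and this is exactly what makes $d(u,v)^{\alpha}\,C(\alpha,n)$ of order $n$ in the worst case $d(u,v)\approx n/2$. Concretely, I would upper bound the sum by the integral
\[
C(\alpha,n) \;\le\; 2\int_{1}^{n/2} x^{-\alpha}\,dx \;=\; \frac{2}{1-\alpha}\bigl((n/2)^{1-\alpha}-1\bigr) \;\le\; \frac{2\,(n/2)^{1-\alpha}}{1-\alpha},
\]
using $\alpha<1$ in the last step. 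Combining this with the trivial bound $d(u,v)^{\alpha}\le (n/2)^{\alpha}$ yields
\[
d(u,v)^{\alpha}\,C(\alpha,n) \;\le\; (n/2)^{\alpha}\cdot \frac{2\,(n/2)^{1-\alpha}}{1-\alpha} \;=\; \frac{n}{1-\alpha}.
\]
Plugging this into the expression for $\Prc{(u,v)\in E_p}$ gives the claim with $c=1-\alpha\in(0,1)$.

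There is no real obstacle: the argument is a one-line integral estimate, and the only thing to be a little careful about is that the integral bound really is valid (which requires $\alpha<1$, exactly the regime we are in) and that the resulting $c=1-\alpha$ is indeed in $(0,1)$, which follows because $\alpha\in(0,1)$ by hypothesis. One should also verify that for $n\ge 3$ the range of summation $\{2,\dots,n/2\}$ is nonempty and that the ring-edge case is not vacuously handled in an odd way; both are routine and do not affect the bound.
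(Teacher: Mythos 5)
Your proof is correct and follows essentially the same route as the paper's: upper-bound $C(\alpha,n)$ by an integral comparison (valid precisely because $\alpha<1$), combine with the worst case $d(u,v)\le n/2$, and conclude that $d(u,v)^{\alpha}C(\alpha,n)=O(n)$. Your version is in fact marginally tidier in that it exhibits the explicit constant $c=1-\alpha$ and separately disposes of the ring-edge case, which the paper's proof leaves implicit.
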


\begin{proof}
The normalizing constant $C(\alpha,n)$ (see Definition~\ref{def:small-world})
can be upper bounded as follows
\[
C(\alpha,n) = 2\sum_{x=2}^{n/2}\frac{1}{x^\alpha} 
\leq \frac{2}{2^{\alpha}}+\int_{2}^{n/2}\frac{2}{x^{\alpha}}dx
\leq 2^{1-\alpha} +\frac{2}{1-\alpha}\left(\frac{n}{2}\right)^{1-\alpha}.
\]
Hence, a constant $c \in (0,1)$ (depending only on $\alpha$) exists such that

\begin{align*}
\Prc{(u,v) \in E_p} & = \frac{p}{C(\alpha,n)d(u,v)^\alpha} 
\geq \frac{p}{C(\alpha,n)(n/2)^\alpha} \\
& \geq \frac{p}{2^{1-\alpha}(n/2)^{\alpha}+n/(1-\alpha)}
\geq \frac{p \cdot c}{n}.
\end{align*}
\end{proof}

The above fact proves that, when $\alpha<1$, the percolation subgraph $G_p$ of
a graph sampled from \SWGpl stochastically dominates a cycle with additional
\Erdos\ random edges. To prove Lemma~\ref{le:1alpha_low} we thus use a previous
result in~\cite{becchettiCDPTZ21} on such class of random graphs. In
particular, we first give an equivalent formulation of Lemma~5.1
in~\cite{becchettiCDPTZ21}, that states that, with constant probability the
sequential BFS visit (Algorithm~\ref{alg:sequential-BFS-visit_new}) reaches
$\Omega(\log n)$ nodes within $\bigO(\log n)$ rounds. We then consider a
\textit{parallel}-BFS visit (Algorithm~\ref{alg:parallel-BFS-visit}) and give
an equivalent formulation of Lemma~5.2 in~\cite{becchettiCDPTZ21}, that states
that, w.h.p., the parallel-BFS visit starting with $\Omega(\log n)$ nodes
reaches a constant fraction of nodes within $\bigO(\log n)$ rounds.

We also introduce a slight different version of the BFS visit of
Algorithm~\ref{alg:parallel-BFS-visit}, where we have also a set $R_0$ of
removed nodes in input. 

\begin{algorithm}[H]
\caption{BFS visit of $G_p$}
\small{
\begin{algorithmic}[1]
\State\textbf{Input}: the subgraph $G_p=(V,E_p)$, an initiator $s \subseteq V$, a set of removed nodes $R_0 \subseteq V$.
\State $Q = \{s\}$
\State $R = R_0$
\While{$Q \neq \emptyset $}
    \State $w = \dequeue(Q)$
    \State $R= R \cup \{w\}$
			\For {each neighbor $x$ of $w$ in $G_p$ such that $x \not \in R $} 
					   \State $\enqueue(x,Q)$
				  \EndFor
 \EndWhile
\end{algorithmic}}
\label{alg:sequential-BFS-visit_new}
\end{algorithm}

\begin{lemma}
\label{lem:old_erdos_phase1}
Under the hypothesis of Lemma~\ref{le:1alpha_low}, let $v \in V$ be a node and
$c \in (0,1)$ be a constant as in Fact~\ref{fact:alpha<1_erdos}. For every
$\beta>0$, $\varepsilon>0$, and percolation probability
$p>\frac{\sqrt{c^2+6c+1}-c-1}{2c}+\varepsilon$, there are positive parameters
$k$ and $\gamma$ (depending only on $\varepsilon$, $c$ and $p$) such that the
following holds: the BFS visit (Algorithm~\ref{alg:sequential-BFS-visit_new})
with input $G_p$, $v$, and a set $R_0$ with $|R_0| \leq \log^4 n$, with
probability $\gamma$, a time $\tau_1=\bigO(\log n)$ exists such that
\[
|(R\setminus R_0)\cup Q|\geq n/k \qquad \text{ OR } \qquad |Q|\geq \beta \log n \, .
\] 
\end{lemma}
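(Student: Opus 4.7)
The plan is to reduce the statement to Lemma~5.1 of~\cite{becchettiCDPTZ21}, which is the analogue of the present claim for a graph obtained as the union of a cycle with independent Erd\H{o}s--R\'enyi edges, and then to absorb the removed set $R_0$ via a coupling argument. By Fact~\ref{fact:alpha<1_erdos}, every non-ring pair $(u,v)$ lies in $E_p$ independently with probability at least $pc/n$, so $G_p$ stochastically dominates the percolation subgraph $H_p$ of $H:=(V,E_1\cup E')$, where $E'$ is a set of Erd\H{o}s--R\'enyi edges with parameter $c/n$ independent of the cycle. A standard edge-by-edge coupling shows that the BFS of Algorithm~\ref{alg:sequential-BFS-visit_new} on $G_p$ discovers, in every round, a superset of the nodes discovered by the same visit on $H_p$, and that this coupling is preserved when a common removed set $R_0$ is supplied as input. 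It therefore suffices to prove the lemma for the visit on $H_p$.

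The model $H_p$ is exactly the one analyzed in~\cite{becchettiCDPTZ21}: a percolated cycle plus an independent Erd\H{o}s--R\'enyi graph with edge parameter $pc/n$. For this model, Lemma~5.1 of~\cite{becchettiCDPTZ21} establishes precisely the conclusion of the present lemma in the case $R_0 = \emptyset$: whenever $p$ exceeds the positive root $\frac{\sqrt{c^2+6c+1}-c-1}{2c}$ of the quadratic $c p^2 + (c+1) p - 1 = 0$, there exist constants $k$ and $\gamma_0>0$ depending only on $\varepsilon$, $c$, and $p$ such that, with probability at least $\gamma_0$, a time $\tau_1 = O(\log n)$ exists at which $|R\cup Q| \ge n/k$ or $|Q| \ge \beta \log n$. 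Their proof stochastically dominates the BFS offspring distribution by a supercritical Galton--Watson process; the constant $\gamma_0$ arises as its survival probability and the quadratic threshold comes from a two-step (ring/bridge) analysis that decouples dependencies between successive BFS layers.

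To pass from $R_0 = \emptyset$ to the case $|R_0| \le \log^4 n$, I would couple the two BFS executions on the same realization of $H_p$. The executions agree until the first round in which the $R_0$-free visit queries an edge incident to a node of $R_0$. During the first $\tau_1 = O(\log n)$ rounds, the $R_0$-free visit explores at most $O(n/k)$ nodes and, at each visited node, tests at most two ring neighbours and a $\mathrm{Bin}(n-1, pc/n)$ number of bridge neighbours, so the total number of edge queries is at most $\mathrm{polylog}(n)$ with probability $1-o(1)$ (by a Chernoff bound together with Fact~\ref{fact:degreeofSWGpl}). Each such query independently hits $R_0$ with probability at most $|R_0|/n = O(\log^4 n / n)$, so a union bound yields that the two executions coincide with probability $1-o(1)$. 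Consequently the constant $\gamma_0$ is preserved up to a $o(1)$ loss, and the claim follows with $\gamma = \gamma_0/2$ for $n$ large enough.

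\textbf{Main obstacle.} The substantive work---a careful branching-process analysis that pins down the exact threshold $\frac{\sqrt{c^2+6c+1}-c-1}{2c}$ and furnishes a constant survival probability $\gamma_0$---is already carried out in~\cite{becchettiCDPTZ21} and is black-boxed here. The only genuine new task is the $R_0$-coupling, whose only subtlety is that one must bound not only the number of nodes discovered by BFS but also the number of edges it probes (including those leading to already-visited vertices), since any such probe could in principle touch $R_0$; the bound $|R_0|\le \log^4 n$ is comfortably below the $\sqrt{n}$ regime where birthday-style collisions would become non-negligible, so the coupling succeeds with probability $1-o(1)$ as claimed.
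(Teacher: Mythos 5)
Your overall route coincides with the paper's: the paper does not reprove this lemma from scratch but obtains it exactly as you do, by using Fact~\ref{fact:alpha<1_erdos} to argue that $G_p$ stochastically dominates the percolation of a cycle plus an \Erdos\ graph, and then invoking Lemma~5.1 of~\cite{becchettiCDPTZ21} for that model. The one place you diverge is that you read the cited lemma as covering only the case $R_0=\emptyset$ and try to supply the removed set yourself via a coupling; the paper instead states Lemma~\ref{lem:old_erdos_phase1} (with the $|R_0|\le\log^4 n$ clause already built in) as an ``equivalent formulation'' of the cited result, i.e.\ it takes the robustness to a small removed set as part of what is being imported.

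The coupling you add is the only piece of original reasoning in the proposal, and as written it does not work. You bound the probability that an edge query ``hits $R_0$'' by $|R_0|/n$ per query, which implicitly treats the queried endpoint as uniformly random. That is defensible for the bridge/\Erdos\ edges, but not for the ring edges: $R_0$ is an arbitrary (effectively adversarial, and in the intended application graph-dependent) set, and nothing prevents it from containing ring-neighbours of $v$ or of other early-visited nodes, in which case the two executions diverge with probability $\Omega(1)$, not $O(\log^4 n/n)$. So the claim that the $R_0$-free and $R_0$-constrained runs ``coincide with probability $1-o(1)$'' is false in general, and the conclusion $\gamma=\gamma_0/2$ does not follow from your argument. (The lemma itself is still plausible because it only asks for a constant success probability $\gamma$, and the branching-process analysis of~\cite{becchettiCDPTZ21} tolerates the loss of $\mathrm{polylog}(n)$ vertices out of $n$; but establishing that requires rerunning or directly citing their argument with the removed set inside it, not a ``the two runs agree w.h.p.'' coupling.) A secondary, more minor point: the assertion that the BFS on $G_p$ discovers a superset of the nodes discovered on $H_p$ ``in every round'' is not literally true round-by-round for a sequential (per-dequeue) visit, since the dequeue order changes when edges are added; the domination has to be phrased in terms of distance layers or of the cardinality of the discovered set, which is also how the paper implicitly uses it.
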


\begin{algorithm}[H]
\caption{Parallel BFS visit of $G_p$}
\small{
\textbf{Input}: the subgraph $G_p =(V,E_p)$, a set of initiators $I_0 \subseteq V$, a set of removed nodes $R_0 \subseteq V$

    \begin{algorithmic}[1]
	\State $Q = I_0$
	\State $R= R_0$
	\While{$Q \neq \emptyset $}\label{line:whileBFS-parallel}
	   \State $A=R\cup Q$
	   \State $X=\text{neighbors}(Q)$
	   \State $Q'=Q$
	   \State $Q=\emptyset$
	   \While{$Q' \neq \emptyset$}
	        \State $w=\texttt{dequeue}(Q')$
	        \State $R= R \cup \{w\}$
	       \For{each $x \in X$}
	       \State $\texttt{enqueue}(x,Q)$
	       \EndFor
	        \EndWhile
 	 \EndWhile
	\end{algorithmic}}
\label{alg:parallel-BFS-visit}
\end{algorithm}

\begin{lemma}
\label{lem:old_erdos_phase2}
Under the hypothesis of Lemma~\ref{le:1alpha_low}, let $c \in (0,1)$ be a
constant as in Fact~\ref{fact:alpha<1_erdos}. For every $\varepsilon>0$ and
percolation probability $p>\frac{\sqrt{c^2+6c+1}-c-1}{2c}+\varepsilon$, there
are positive parameters $k$, $\beta$ (depending only on $c$, $p$ and
$\varepsilon$) such that the following holds: For any pair of sets $I_0, R_0
\subseteq V$, with $|I_0| \geq \beta \log n$ and $|R_0| \leq \log^4 n$, in the
parallel BFS-visit (Algorithm~\ref{alg:parallel-BFS-visit}) with input $G_p$,
$I_0$, and $R_0$, with probability at least $1-1/n$, a time $\tau_2=\bigO(\log
n)$ exists such that 
\[
|(R\setminus R_0)\cup Q|\geq n/k . 
\]
\end{lemma}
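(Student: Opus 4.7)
The plan is to reduce the statement directly to the analogous result for a cycle augmented with \Erdos bridges, which is Lemma~5.2 of~\cite{becchettiCDPTZ21}. The enabling ingredient is Fact~\ref{fact:alpha<1_erdos}: for $\alpha<1$, every non-ring pair $(u,v)$ belongs to $E_p$ independently with probability at least $pc/n$, where $c\in(0,1)$ depends only on $\alpha$. This lets me couple the sampling of $(G,G_p)$ with an independent sampling of $(H,H_p)$, where $H$ is the union of the cycle $(V,E_1)$ and an \Erdos layer $\mathcal{G}_{n,pc/n}$ and $H_p$ is $H$ percolated at rate $p$, in such a way that every edge of $H_p$ is also an edge of $G_p$ (cycle edges are coupled identically; each \Erdos bridge of $H_p$ is included in the percolated bridge set of $G_p$ by thinning against the excess probability guaranteed by Fact~\ref{fact:alpha<1_erdos}).

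Define $T_J(I_0,R_0)$ to be the first round at which Algorithm~\ref{alg:parallel-BFS-visit}, run on the graph $J$ with inputs $(I_0,R_0)$, satisfies $|(R\setminus R_0)\cup Q|\ge n/k$. Since the algorithm is a deterministic function of $(J,I_0,R_0)$ and this quantity is monotone non-decreasing in the underlying edge set, the above coupling gives, for every $t\ge 0$,
\[
\Prc{T_{G_p}(I_0,R_0)\le t}\ \ge\ \Prc{T_{H_p}(I_0,R_0)\le t}.
\]
Applying Lemma~5.2 of~\cite{becchettiCDPTZ21} to $H_p$ produces constants $\beta,k$ and a time $\tau_2=\bigO(\log n)$ such that the right-hand side above is at least $1-1/n$ whenever $|I_0|\ge \beta\log n$, $|R_0|\le \log^4 n$, and $p$ exceeds the stated threshold $\frac{\sqrt{c^2+6c+1}-c-1}{2c}+\varepsilon$. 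By the displayed inequality, the left-hand side enjoys the same lower bound, which is exactly the conclusion of Lemma~\ref{lem:old_erdos_phase2}.

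The main subtlety, and the reason the reduction is not completely immediate, is that we must tolerate a removed set $R_0$ of size up to $\log^4 n$. This is accommodated in~\cite{becchettiCDPTZ21} because at every round the frontier $Q$ has size $\Omega(\log n)$ and each of its nodes has, in $H_p$, roughly $(2+c)p$ fresh neighbors chosen almost uniformly among the $n$ candidates; a Chernoff estimate then shows that only a negligible fraction of outgoing edges are absorbed by $R_0$, and these local losses fit inside the $1/n$ slack. A secondary subtlety, which the coupling bypasses entirely, is that the bridges of $G_p$ are not exchangeable and may be much longer than the edges of the \Erdos layer; the argument simply discards this additional reach when passing to $H_p$, which is why the reduction is one-sided and exactly suffices for the lower-bound statement asked for.
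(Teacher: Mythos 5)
Your reduction---stochastic domination of $G_p$ over a percolated cycle-plus-\Erdos\ graph via Fact~\ref{fact:alpha<1_erdos}, monotonicity of the parallel-BFS reached set in the underlying edge set, and an appeal to Lemma~5.2 of~\cite{becchettiCDPTZ21}---is exactly the route the paper takes; the paper in fact offers no further argument, presenting the lemma as an ``equivalent formulation'' of that cited result once the domination is in place. One small parameter slip: to match the marginal bridge probability $pc/n$ guaranteed by Fact~\ref{fact:alpha<1_erdos}, the comparison graph should be the cycle union $\mathcal{G}_{n,c/n}$ percolated at rate $p$ (equivalently, the percolated cycle union an unpercolated $\mathcal{G}_{n,pc/n}$), not $\mathcal{G}_{n,pc/n}$ percolated again at rate $p$, which would give bridge density $p^{2}c/n$ and a mismatched threshold.
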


Now we are ready to prove Lemma~\ref{le:1alpha_low}.

\begin{proof}[Proof of Lemma~\ref{le:1alpha_low}]
Let $c$ be as in Fact~\ref{fact:alpha<1_erdos} and $\overline{p} =
\frac{\sqrt{c^2+6c+1}-c-1}{2c}$. Let $p > \overline{p} + \varepsilon$, for an
arbitrarily-small constant $\varepsilon > 0$, and let $\beta>0$ be the constant
in Lemma~\ref{lem:old_erdos_phase2} and $k$ be the constant in
Lemmas~\ref{lem:old_erdos_phase1} and~\ref{lem:old_erdos_phase2}.  We consider
the following process, where we initialize $R_0 = \emptyset$ and $\tau_1 =
\bigO(\log n)$ is as in Lemma~\ref{lem:old_erdos_phase1}.

\begin{enumerate}
    \item Consider a node $v \in V \setminus R_0$.

    \item From $v$, perform a sequential-BFS visit
    (Algorithm~\ref{alg:sequential-BFS-visit}), with input $G_p$, $v$, and
    $R_0$, for $\tau_1$ while loops and add to $R_0$ the sets $Q$ and $R$ as
    they are at the end of the $\tau_1$-th iteration of the while loop ($R_0 =
    R_0 \cup R \cup Q$).
    
    \item If $|Q| \geq \beta \log n$ or $|Q \cup R| \geq n/k$, interrupt the
    process.
    
    \item Restart from $1$.
\end{enumerate}

Let $\gamma>0$ be the constant in Lemma~\ref{lem:old_erdos_phase1}.  We prove
that the process above terminates within $\sigma = \log_{1-\gamma}(n)$
iterations, w.h.p. 

First we notice that, at each iteration of the process, the set $R_0$ grows,
w.h.p., at most of size $\bigO(\log^2 n)$, since each node in $G_p$ has degree
at most $\bigO(\log n)$, w.h.p. (Fact~\ref{fact:degreeofSWGpl}) and so, in
$\tau_1$ iteration of the parallel-BFS, at most $\bigO(\log^2 n)$ nodes will be
reached by $v$. This implies that, at each iteration $i \leq \sigma$ of the
process $|R_0| = \bigO(\log^3 n)$, w.h.p.

From Lemma~\ref{lem:old_erdos_phase1} it follows that,a sequential-BFS with in
input $G_p$, $v$ and $R_0$ with $|R_0| =\bigO(\log^3 n)$ is such that, at the
end of $\tau_1$-th iteration
\[
\Prc{|Q|\geq \beta \log n \text{ or } |R \cup Q| \geq n/k}\geq \gamma>0\,.
\]

Therefore, the probability that the process exceeds $\sigma$ iterations
is at most $(1-\gamma)^\sigma \leq 1/n$.

So, w.h.p., a node $v$ exists such that the sequential-BFS starting from $v$,
after $\bigO(\log n)$ steps, satisfies at least one of the two conditions: i)
$|Q| \geq \beta \log n$ or ii) $|Q \cup R | \geq n/k$.

If ii) holds, the lemma is proven. Indeed, w.h.p. we have the existence of a
node $v$ such that there is a set of $\Omega(n)$ nodes at distance at most
$\bigO(\log n)$ from $v$.

If i) holds, it suffices to perform a sequential-BFS
(Algorithm~\ref{alg:sequential-BFS-visit}) with in input $G_p$, $I_0 = Q$ and
$R_0$ and apply Lemma~\ref{lem:old_erdos_phase2} to claim that such BFS reaches
at least $\Omega(n)$ nodes in $\bigO(\log n)$ steps.  \end{proof}

\section{Mathematical Tools}
\begin{theorem}[Chernoff Bound, \cite{dubhashipanconesi09}]
\label{thm:chernoff}
Let $X=\sum_{i=1}^n X_i$, where $X_i$ with $i \in [n]$ are independently
distributed in $[0,1]$. Let $\mu=\Expcc{X}$ and $\mu_L \leq \mu \leq \mu_H$.
Then:
\begin{itemize}
    \item for  every  $t>0$
    \[
    \Prc{X>\mu_H+t},\Prc{X<\mu_L-t}\leq e^{-2t^2/n} ;
    \]
    \item for $\varepsilon>0$
    \[
    \Prc{X>(1+\varepsilon)\mu_H}\leq e^{-\frac{\epsilon^2}{2}\mu_H} 
    \ \text{ and } \
    \Prc{X<(1-\epsilon)\mu_L}\leq e^{-\frac{\epsilon^2}{2}\mu_L} . 
    \]
\end{itemize}

\end{theorem}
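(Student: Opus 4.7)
The plan is to prove both parts by the standard Chernoff-Hoeffding method: control the moment generating function of $X$, apply Markov's inequality to an exponential, and optimize the free parameter. Since the $X_i$ are independent, $\Expcc{e^{sX}} = \prod_i \Expcc{e^{sX_i}}$ for any $s$, so the task reduces to a one-variable estimate on each factor and an optimization over $s > 0$.

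For the additive (Hoeffding-type) bound $e^{-2t^2/n}$, I would first establish Hoeffding's lemma: for any random variable $Y \in [a,b]$ with $\Expcc{Y}=0$, the bound $\Expcc{e^{sY}} \le e^{s^2(b-a)^2/8}$ holds for all $s \in \mathbb{R}$. The quick way is to use convexity of $e^{sy}$ to write $e^{sy} \le \tfrac{b-y}{b-a}e^{sa} + \tfrac{y-a}{b-a}e^{sb}$, take expectations, and then bound the logarithm of the resulting expression by Taylor expansion around $s=0$, which yields the $s^2(b-a)^2/8$ factor. Apply this to $Y_i = X_i - \Expcc{X_i}$, which lies in an interval of length at most $1$, to obtain $\Expcc{e^{s(X-\mu)}} \le e^{s^2 n/8}$. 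Markov then gives $\Prc{X > \mu_H + t} \le \Prc{X - \mu > t} \le e^{-st + s^2n/8}$, and choosing $s = 4t/n$ produces the exponent $-2t^2/n$. The lower-tail bound follows by applying the same argument to $-X_i$, using $\mu_L \le \mu$ in the other direction.

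For the multiplicative bound, I would first show $\Expcc{e^{sX_i}} \le 1 + \Expcc{X_i}(e^s - 1)$ for $s > 0$ using the convexity bound $e^{sx} \le 1 - x + xe^s$ on $[0,1]$; then $1 + p_i(e^s-1) \le e^{p_i(e^s-1)}$ gives $\Expcc{e^{sX}} \le e^{\mu(e^s-1)} \le e^{\mu_H(e^s - 1)}$. Markov with $s = \ln(1+\varepsilon)$ yields $\Prc{X > (1+\varepsilon)\mu_H} \le \bigl((1+\varepsilon)^{-(1+\varepsilon)} e^{\varepsilon}\bigr)^{\mu_H}$, and the standard calculus inequality $(1+\varepsilon)\ln(1+\varepsilon) - \varepsilon \ge \varepsilon^2/2$ (valid for $\varepsilon \in (0,1]$, via Taylor expansion of $f(\varepsilon)=(1+\varepsilon)\ln(1+\varepsilon)-\varepsilon-\varepsilon^2/2$ and checking $f(0)=f'(0)=0$ with $f''\ge 0$) delivers the exponent $-\varepsilon^2\mu_H/2$. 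The lower tail is symmetric: bound $\Expcc{e^{-sX_i}} \le e^{p_i(e^{-s}-1)}$ and optimize at $s = -\ln(1-\varepsilon)$, then use $(1-\varepsilon)\ln(1-\varepsilon)+\varepsilon \ge \varepsilon^2/2$ for $\varepsilon \in (0,1)$.

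The only mildly delicate step is the scalar inequality $(1+\varepsilon)\ln(1+\varepsilon) - \varepsilon \ge \varepsilon^2/2$ (and its lower-tail analogue), which is the point where the clean constants $\varepsilon^2/2$ emerge; everything else is bookkeeping. I would present Hoeffding's lemma and the $e^{\mu(e^s-1)}$ MGF bound as the two technical anchors, then derive the two displayed inequalities as the corresponding optimizations, noting that the use of $\mu_H$ on the upper-tail side and $\mu_L$ on the lower-tail side is immediate from the monotonicity of the bounds in $\mu$.
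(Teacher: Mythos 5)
The paper never actually proves this statement: it is imported verbatim as a black-box tool from the cited reference \cite{dubhashipanconesi09}, so your proposal can only be judged on its own merits. Three of your four bounds are sound. The additive (Hoeffding) part is correct: Hoeffding's lemma, the factorization of the moment generating function, Markov's inequality, and the choice $s=4t/n$ give exactly $e^{-2t^2/n}$, and the reduction from $\mu_H,\mu_L$ to $\mu$ by monotonicity is as immediate as you say. The multiplicative \emph{lower} tail also works, because the scalar inequality $(1-\varepsilon)\ln(1-\varepsilon)+\varepsilon\ge\varepsilon^2/2$ is true on $(0,1)$: its left-minus-right difference vanishes at $0$ and has derivative $-\ln(1-\varepsilon)-\varepsilon\ge 0$.

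The multiplicative \emph{upper} tail, however, contains a genuine gap, and it cannot be patched without changing the statement. Your key inequality $(1+\varepsilon)\ln(1+\varepsilon)-\varepsilon\ge\varepsilon^2/2$ is false for \emph{every} $\varepsilon>0$; the reverse inequality holds. Indeed, setting $f(\varepsilon)=(1+\varepsilon)\ln(1+\varepsilon)-\varepsilon-\varepsilon^2/2$, one has $f(0)=f'(0)=0$ but $f''(\varepsilon)=\frac{1}{1+\varepsilon}-1=-\frac{\varepsilon}{1+\varepsilon}\le 0$, so your check ``$f''\ge 0$'' has the sign reversed and in fact $f\le 0$; numerically, $f(1)=2\ln 2-1-\tfrac{1}{2}\approx -0.11$. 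Consequently the Chernoff optimization yields only $\exp\left(-\mu_H\left[(1+\varepsilon)\ln(1+\varepsilon)-\varepsilon\right]\right)$, which is \emph{weaker} than $e^{-\varepsilon^2\mu_H/2}$. Worse, the bound as displayed in the theorem is itself false: take $X_i\sim\mathrm{Be}(\lambda/n)$ i.i.d.\ with $\mu=\mu_H=\lambda$ and $\varepsilon=1$; as $n\to\infty$ the law of $X$ approaches $\mathrm{Poisson}(\lambda)$, whose tail at $2\lambda$ is $e^{-(2\ln 2-1)\lambda(1+o(1))}$, and since $2\ln 2-1<\tfrac{1}{2}$ this exceeds $e^{-\lambda/2}$ for large $\lambda$. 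The theorem as printed is a misquote of the cited reference, where the upper tail carries the exponent $-\varepsilon^2\mu_H/3$ and is restricted to $0<\varepsilon\le 1$. Your argument does close if you target that correct form instead, via $(1+\varepsilon)\ln(1+\varepsilon)-\varepsilon\ge\varepsilon^2/3$ on $(0,1]$: here the difference vanishes to second order at $0$ and its derivative $\ln(1+\varepsilon)-2\varepsilon/3$ is nonnegative on $(0,1]$, so the inequality holds where needed.
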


\end{document}